\else \usepackage{amsthm}
\newcommand{\Holder}{H\"{o}lder\xspace}
\newcommand{\metricspace}{\mathcal{Z}}
\newtheorem{thm}{Theorem}
\newtheorem{cor}[thm]{Corollary}
\newtheorem{lemma}[thm]{Lemma}
\newtheorem{conjecture}[thm]{Conjecture}
\newtheorem{prop}[thm]{Proposition}
\newtheorem{fact}[thm]{Fact}
\newtheorem{defn}[thm]{Definition}
\newtheorem{rem}[thm]{Remark}
\numberwithin{equation}{section}
\numberwithin{thm}{section}
\newcommand{\cororef}[1]{Corollary~{\ref{cor:#1}}}
\newcommand{\lemref}[1]{Lemma~{\ref{lem:#1}}}
\newcommand{\defref}[1]{Definition~{\ref{def:#1}}}
\newcommand{\secref}[1]{Section~{\ref{sec:#1}}}
\DeclareMathAlphabet{\mathsfsl}{OT1}{cmss}{m}{sl}
\newcommand{\qtext}[1]{\quad\text{#1}\quad}
\newcommand{\term}{\emph}
\newcommand{\notate}[1]{\textcolor{red}{\textbf{[#1]}}}
\renewcommand{\phi}{\varphi}
\newcommand{\II}{\mathbbm{1}}
\newcommand{\defby}{\mathrel{\mathop:}=}
\newcommand{\half}{\tfrac{1}{2}}
\newcommand{\econst}{\mathrm{e}}
\newcommand{\iunit}{\mathrm{i}}
\newcommand{\Id}{\mathbf{I}}
\newcommand{\zeromtx}{\bm{0}}
\newcommand{\coll}[1]{\mathscr{#1}}
\newcommand{\R}{\mathbb{R}}
\newcommand{\C}{\mathbb{C}}
\newcommand{\M}{\mathbb{M}}
\newcommand{\Sym}[1]{\mathbb{H}^{#1}}
\newcommand{\abs}[1]{\left\vert {#1} \right\vert}
\newcommand{\abssq}[1]{{\abs{#1}}^2}
\newcommand{\real}{\operatorname{Re}}
\newcommand{\imag}{\operatorname{Im}}
\newcommand{\diff}[1]{\mathrm{d}{#1}}
\newcommand{\idiff}[1]{\, \diff{#1}}
\newcommand{\ddt}[1]{\frac{\mathrm{d}}{\mathrm{d}{#1}}}
\newcommand{\Prob}[1]{\mathbb{P}\left\{ {#1} \right\}}
\newcommand{\Expect}{\operatorname{\mathbb{E}}}
\newcommand{\Var}{\operatorname{Var}}
\newcommand{\condl}{\, \vert \,}
\newcommand{\bcondl}{\, \big\vert \,}
\newcommand{\vct}[1]{\bm{#1}}
\newcommand{\mtx}[1]{\bm{#1}}
\newcommand{\adj}{*}
\newcommand{\trace}{\operatorname{tr}}
\newcommand{\ntr}{\operatorname{\bar{tr}}}
\def\indep{\perp\!\!\!\perp} 
\newcommand{\psdle}{\preccurlyeq}
\newcommand{\psdge}{\succcurlyeq}
\newcommand{\ip}[2]{\left\langle {#1},\ {#2} \right\rangle}
\newcommand{\absip}[2]{\abs{\ip{#1}{#2}}}
\newcommand{\norm}[1]{\left\Vert {#1} \right\Vert}
\newcommand{\normsq}[1]{\norm{#1}^2}
\newcommand{\smnorm}[2]{{\bigl\Vert {#2} \bigr\Vert}_{#1}}
\newcommand{\fnorm}[1]{\norm{#1}_{\mathrm{F}}}
\newcommand{\fnormsq}[1]{\fnorm{#1}^2}
\newcommand{\indnorm}[2]{\norm{#2}_{#1\to#1}} 
\newcommand{\pnorm}[2]{\norm{#2}_{S_{#1}}} 
\newcommand{\Zvec}{Z}
\newcommand{\tv}[0]{d_{\text{TV}}}
\newcommand{\OurAbstract}{
This paper establishes new concentration inequalities
for random matrices constructed from independent random variables.
These results are analogous with the generalized Efron--Stein inequalities
developed by Boucheron et al.  The proofs rely on
the method of exchangeable pairs.}
\begin{document}

\title{Efron--Stein Inequalities for Random Matrices
}

\titlerunning{Matrix Efron--Stein Inequalities}        

\author{
Daniel~Paulin\and \nolinebreak Lester~Mackey \and \nolinebreak Joel~A.~Tropp
}

\authorrunning{D.~Paulin\and  L.~Mackey \and J.~A.~Tropp} 

\institute{Daniel~Paulin \at
             Department of Mathematics, National University of Singapore. \\
             \email{paulindani@gmail.com}          
             \and
             Lester~Mackey \at
             Department of Statistics, Stanford University. \\
             \email{lmackey@stanford.edu}
             \and
             Joel~A.~Tropp \at
             Department of Computing and Mathematical Sciences, California Institute of Technology.\\  	   \email{jtropp@cms.caltech.edu}
}

\date{Date: 11 May 2014.  Received: date / Accepted: date.}

\maketitle

\begin{abstract}
\OurAbstract
\keywords{\notate{Review} Concentration inequalities \and Stein's method \and random matrix \and non-commutative \and exchangeable pairs \and coupling \and Efron--Stein inequality \and trace inequality}
\end{abstract}

\else

\usepackage{natbib}

\begin{document}

\begin{frontmatter}
\title{Efron--Stein Inequalities \\ for Random Matrices}
\runtitle{Matrix Efron--Stein Inequalities}

\begin{aug}
\author{\fnms{Daniel} \snm{Paulin}\ead[label=e1]{paulindani@gmail.com}}, 
\author{\fnms{Lester} \snm{Mackey}\ead[label=e2]{lmackey@stanford.edu}}
\and
\author{\fnms{Joel A.} \snm{Tropp}\thanksref{tJ}\ead[label=e3]{jtropp@cms.caltech.edu}}
\runauthor{Paulin, Mackey, and Tropp}
\thankstext{tJ}{Tropp was supported by ONR awards N00014-08-1-0883 and N00014-11-1002, AFOSR award FA9550-09-1-0643, and a Sloan Research Fellowship.}

\affiliation{National University of Singapore, Stanford University, \and California Institute of Technology}

\address{D.~Paulin\\
Department of Statistics and Applied Probability\\
National University of Singapore\\
6 Science Drive 2, Block S16, 06-127\\
Singapore 117546\\
\printead{e1}}

\address{L.~Mackey\\
Department of Statistics\\
Stanford University\\
Sequoia Hall\\
390 Serra Mall\\
Stanford, California 94305-4065\\
USA\\
\printead{e2}}

\address{J.~A.~Tropp\\
Department of Computing and Mathematical Sciences\\
California Institute of Technology\\
Annenberg Center, Room 307\\
1200 E. California Blvd.\\
Pasadena, California 91125\\
\printead{e3}}
\end{aug}

\begin{abstract}
\OurAbstract
\end{abstract}

\begin{keyword}[class=AMS]
\kwd[Primary ]{60B20} 
\kwd{60E15} 
\kwd[; secondary ]{60G09} 
\kwd{60F10} 
\end{keyword}

\begin{keyword}
\kwd{Concentration inequalities}
\kwd{Stein's method}
\kwd{random matrix}
\kwd{non-commutative}
\kwd{exchangeable pairs}
\kwd{coupling}
\kwd{bounded differences}
\kwd{Efron--Stein inequality}
\kwd{trace inequality}
\end{keyword}
\end{frontmatter}
\fi

\vspace{1pc}

\section{Introduction} \label{sec:intro}

Matrix concentration inequalities provide probabilistic bounds for
the spectral-norm deviation of a random matrix from its mean value.
The monograph~\citep{Tro14:User-Friendly-FnTML} contains an overview
of this theory and an extensive bibliography.
This machinery has revolutionized the analysis of
non-classical random matrices that arise in
statistics~\citep{koltchinskii2012neumann},
machine learning~\citep{MorvantKoRa12},
signal processing~\citep{NJS13:Phase-Retrieval},
numerical analysis~\citep{AT14:Effective-Stiffness},
theoretical computer science~\citep{WX08:Derandomizing-Ahlswede-Winter},
and combinatorics~\citep{Oli11:Spectrum-Random}.

In the scalar setting, the core concentration results concern sums
of independent random variables.  Likewise, in the matrix setting,
the central results concern independent sums.  For example, the
matrix Bernstein inequality~\cite[Thm.~1.4]{Tro11:User-Friendly-FOCM}
describes the behavior of independent, centered random matrices that
are subject to a uniform bound.  There are also a few results that apply
to more general classes of random matrices, e.g., the matrix bounded
difference inequality~\cite[Cor.~7.5]{Tro11:User-Friendly-FOCM} and 
the dependent matrix inequalities of~\cite{MackeyJoChFaTr12}.
Nevertheless, it is common to encounter random matrices that we cannot
treat using these techniques.

In the scalar setting, there are concentration inequalities that can provide
information about the fluctuations of more complicated random variables.
In particular, Efron--Stein inequalities~\citep{BoLuMa2003,BoLuMa2005}
describe the concentration of functions of independent random variables 
in terms of random estimates for the local Lipschitz behavior of those functions.
These results have found extensive applications~\citep{BoLuMa2013}.

The goal of this paper is to establish new Efron--Stein
inequalities that describe the concentration properties
of a matrix-valued function of independent random variables.
The main results appear below as Theorems~\ref{thm:mxmoment}
and~\ref{thm:mxEfronStein}.

To highlight the value of this work, we establish
an improved version of the matrix bounded difference inequality
(Corollary~\ref{cor:bound-diff}).
We also develop a more substantial application to compound
sample covariance matrices (Theorem~\ref{prop:compound}).

We anticipate that our results have many additional consequences.
For instance, we envision new proofs of consistency
for correlation matrix estimation~\cite{QimanWenXin,TonyCaiJiangTiefeng}
and inverse covariance matrix estimation~\cite{Ravikumar} under sparsity constraints.

\begin{rem}[Prior Work]
This paper significantly extends and updates our earlier
report~\cite{PMT13:Deriving-Matrix}.
In particular, the matrix Efron--Stein inequalities are new.
The application to compound sample covariance matrices is also new.
The manuscript~\cite{PMT13:Deriving-Matrix} will not be published.
\end{rem}

\subsection{Technical Approach}

In the scalar setting, the generalized Efron--Stein inequalities
were originally established using entropy methods~\cite{BoLuMa2003,BoLuMa2005}.
Unfortunately, in the matrix setting, entropy methods do not
seem to have the same strength~\citep{CT14:Subadditivity-Matrix}.

Instead, our argument is based on ideas from the method of exchangeable
pairs~\citep{Stein72,Stein86}.  In the scalar setting, this approach for proving 
concentration inequalities was initiated in the paper~\citep{Cha07:Steins-Method}
and the thesis~\citep{Cha08:Concentration-Inequalities}.
The extension to random matrices appears in the recent paper~\citep{MackeyJoChFaTr12}.

The method of exchangeable pairs has two chief advantages over alternative
approaches to matrix concentration.
First, it offers a straightforward way to prove polynomial moment inequalities,
which are not easy to obtain using earlier techniques.
Second, exchangeable pair arguments also apply to random matrices
constructed from weakly dependent random variables.

The paper~\citep{MackeyJoChFaTr12} focuses on sums of weakly
dependent random matrices because the techniques are less effective
for general matrix-valued functionals.  In this work,
we address this shortcoming by developing a matrix version
of the kernel coupling construction from~\cite[Sec.~4.1]{Cha08:Concentration-Inequalities}.
This argument requires some challenging new matrix inequalities
that may have independent interest.  We also describe some new
techniques for controlling the evolution of the kernel coupling.

We believe that our proof of the Efron--Stein inequality via
the method of exchangeable pairs is novel, even in the scalar setting.
As a consequence, our paper contributes to the growing literature
that uses Stein's ideas to develop concentration inequalities.

\section{Notation and Preliminaries from Matrix Analysis}
\label{sec:notation}

This section summarizes our notation,
as well as some background results from matrix analysis.
The reader may prefer to skip this material at first;
we have included detailed cross-references throughout the paper.

\subsection{Elementary Matrices}

First, we introduce the identity matrix $\Id$ and the zero matrix $\zeromtx$.
The standard basis matrix $\mathbf{E}_{ij}$ has a one in the $(i, j)$ position
and zeros elsewhere.  The dimensions of these matrices are determined by context.

\subsection{Sets of Matrices and the Semidefinite Order}

We write $\M^d$ for the algebra of $d \times d$ complex matrices.  The \term{trace}
and \term{normalized trace} are given by
$$
\trace \mtx{B} = \sum_{i=1}^d b_{ii}
\quad\text{and}\quad
\ntr \mtx{B} = \frac{1}{d} \sum_{i=1}^d b_{ii}
\quad\text{for $\mtx{B} \in \M^d$.}
$$
The symbol $\norm{ \cdot }$ always refers to the usual operator norm on $\M^d$ induced by the $\ell_2^d$ vector norm.  We also equip $\M^d$ with the trace inner product $\ip{ \mtx{B} }{ \mtx{C} } \defby \trace[\mtx{B}^* \mtx{C}]$ to form a Hilbert space.  

Let $\Sym{d}$ denote the real-linear subspace of $\M^d$ consisting of $d \times d$ Hermitian matrices.
The cone of positive-semidefinite matrices will be abbreviated as $\Sym{d}_+$.
Given an interval $I$ of the real line, we also define $\Sym{d}(I)$
to be the convex set of Hermitian matrices whose eigenvalues are all contained in $I$.

We use curly inequalities, such as $\psdle$, for the positive-semidefinite order
on the Hilbert space $\Sym{d}$. That is, for $\mtx{A}, \mtx{B} \in \Sym{d}$, we write
$\mtx{A} \psdle \mtx{B}$ if and only if $\mtx{B} - \mtx{A}$ is positive semidefinite.

\subsection{Matrix Functions}

Let $f : I \to \R$ be a function on an interval $I$ of the real line.  We can lift $f$ to form a \term{standard matrix function} $f : \Sym{d}(I) \to \Sym{d}$.  More precisely, for each matrix $\mtx{A} \in \Sym{d}(I)$, we define the standard matrix function via the rule
$$
f(\mtx{A}) := \sum\nolimits_{k=1}^d f(\lambda_k) \, \vct{u}_k \vct{u}_k^\adj,
\quad\text{where}\quad
\mtx{A} = \sum\nolimits_{k=1}^d \lambda_k \, \vct{u}_k \vct{u}_k^\adj
$$
is an eigenvalue decomposition of the Hermitian matrix $\mtx{A}$.  When we apply a familiar scalar function to an Hermitian matrix, we are always referring to the associated standard matrix function.  To denote general matrix-valued functions, we use bold uppercase letters, such as $\mtx{F}, \mtx{H}, \mtx{\Psi}$.

\subsection{Monotonicity \& Convexity of Trace Functions}

The trace of a standard matrix function inherits certain properties from the scalar function.
Let $I$ be an interval, and assume that $\mtx{A}, \mtx{B} \in \Sym{}(I)$.
When the function $f : I \to \R$ is weakly increasing,
\begin{equation} \label{eqn:trace-monotone}
\mtx{A} \psdle \mtx{B}
\quad\text{implies}\quad
\trace f(\mtx{A}) \leq \trace f(\mtx{B}).
\end{equation}
When the function $f : I \to \R$ is convex,
\begin{equation} \label{eqn:trace-convex}
\trace f( \tau \mtx{A} + (1 - \tau) \mtx{B}) \leq \tau \trace f(\mtx{A}) + (1-\tau) \trace f(\mtx{B})
\quad\text{for $\tau \in [0,1]$.}
\end{equation}
See~\cite[Props.~1 and 2]{Petz94} for proofs.

\subsection{The Real Part of a Matrix and the Matrix Square}

For each matrix $\mtx{M}\in \M^d$, we introduce the real
and imaginary parts,
\begin{equation} \label{eqn:reim}
\begin{aligned}
\real(\mtx{M}) &:= \tfrac{1}{2} (\mtx{M}+\mtx{M}^*) \in \Sym{d}
\quad\text{and}\quad \\
\imag(\mtx{M}) &:= \tfrac{1}{2\iunit} (\mtx{M} - \mtx{M}^\adj) \in \Sym{d}.
\end{aligned}
\end{equation}
Note the semidefinite bound
\begin{equation} \label{eqn:re-square}
\real(\mtx{M})^2 \psdle \tfrac{1}{2} (\mtx{MM}^\adj + \mtx{M}^\adj \mtx{M})
\quad\text{for each $\mtx{M} \in \M^d$.}
\end{equation}
Indeed, $\real(\mtx{M})^2 + \imag(\mtx{M})^2 = \half (\mtx{MM}^\adj + \mtx{M}^\adj \mtx{M})$
and $\imag(\mtx{M})^2 \psdge \mtx{0}$.

The real part of a product of Hermitian matrices satisfies
\begin{equation} \label{eqn:matrix-am-gm}
\real(\mtx{AB}) = \frac{\mtx{AB} + \mtx{BA}}{2}
\psdle \frac{\mtx{A}^2 + \mtx{B}^2}{2}
\quad\text{for all $\mtx{A}, \mtx{B} \in \Sym{d}$.}
\end{equation}
This result follows when we expand $(\mtx{A} - \mtx{B})^2 \psdge \mtx{0}$.  As a consequence,
\begin{equation} \label{eqn:square-convex}
\left(\frac{\mtx{A} + \mtx{B}}{2}\right)^2 \psdle \frac{\mtx{A}^2 + \mtx{B}^2}{2}
\quad\text{for all $\mtx{A}, \mtx{B} \in \Sym{d}$.}
\end{equation}
In other words, the matrix square is operator convex.

\subsection{Some Matrix Norms}

Finally, we will make use of two additional families of matrix norms.  For $p \in [1, \infty]$, the Schatten $p$-norm is given by
\begin{equation} \label{eqn:schatten-norm}
\pnorm{p}{\mtx{B}} \defby \big( \trace \abs{\mtx{B}}^p \big)^{1/p}
	\quad\text{for each $\mtx{B} \in \M^d$} ,
\end{equation}
where $\abs{\mtx{B}} \defby (\mtx{B}^\adj \mtx{B})^{1/2}$.  For $p \geq 1$, we introduce the matrix norm induced by the $\ell_p^d$ vector norm:
\begin{equation} \label{eqn:induced-norm}
\indnorm{p}{\mtx{B}} \defby \sup_{\vct{x} \neq \vct{0}} \ \frac{\norm{\mtx{B}\vct{x}}_p}{\norm{\vct{x}}_p}
\quad\text{for each $\mtx{B} \in \M^d$}
\end{equation}
In particular, the matrix norm induced by the $\ell_1^d$ vector norm returns the maximum $\ell_1^d$ norm of a column; the norm induced by $\ell_\infty^d$ returns the maximum $\ell_1^d$ norm of a row.

\section{Matrix Moments and Concentration}
\label{sec:moment-concentration}

Our goal is to develop expectation and tail bounds
for the spectral norm of a random matrix.  As in the scalar
setting, these results follow from bounds for polynomial and exponential
moments.  This section describes the mechanism by which we convert
bounds for matrix moments into concentration inequalities.

\subsection{The Matrix Chebyshev Inequality} \label{sec:matrix-chebyshev}

We can obtain concentration inequalities
for a random matrix in terms of the Schatten $p$-norm.  This
fact extends Chebyshev's inequality.

\begin{prop}[Matrix Chebyshev Inequality] \label{prop:chebyshev}
Let $\mtx{X} \in \Sym{d}$ be a random matrix.  For all $t > 0$,
$$
\Prob{ \norm{\mtx{X}} \geq t } \leq \inf_{p \geq 1} t^{-p} \cdot \Expect \pnorm{p}{\mtx{X}}^p.
$$
Furthermore,
$$
\Expect \norm{\mtx{X}} \leq \inf_{p \geq 1} \big( \Expect \pnorm{p}{\mtx{X}}^p \big)^{1/p}.
$$
\end{prop}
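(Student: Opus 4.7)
The plan is to reduce both statements to Markov's inequality and Jensen's inequality once we observe the elementary norm comparison $\norm{\mtx{X}} \le \pnorm{p}{\mtx{X}}$ for all $p \ge 1$. This comparison holds because, for an Hermitian matrix with eigenvalues $\lambda_1, \dots, \lambda_d$, the operator norm is $\max_k \abs{\lambda_k}$ while the Schatten $p$-norm is $\bigl( \sum_k \abs{\lambda_k}^p \bigr)^{1/p}$, and the maximum of nonnegative numbers is dominated by their $\ell_p$ norm.

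For the tail bound, I would fix $p \ge 1$ and write
\[
\Prob{ \norm{\mtx{X}} \ge t } = \Prob{ \norm{\mtx{X}}^p \ge t^p } \le t^{-p} \, \Expect \norm{\mtx{X}}^p \le t^{-p} \, \Expect \pnorm{p}{\mtx{X}}^p,
\]
where the first inequality is Markov applied to the nonnegative scalar random variable $\norm{\mtx{X}}^p$ and the second uses the norm comparison above. Taking the infimum over $p \ge 1$ yields the first claim.

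For the expectation bound, the same norm comparison gives $\Expect \norm{\mtx{X}} \le \Expect \pnorm{p}{\mtx{X}}$. Since $p \ge 1$, the map $u \mapsto u^{1/p}$ is concave on $[0, \infty)$, so Jensen's inequality gives
\[
\Expect \pnorm{p}{\mtx{X}} = \Expect \bigl( \pnorm{p}{\mtx{X}}^p \bigr)^{1/p} \le \bigl( \Expect \pnorm{p}{\mtx{X}}^p \bigr)^{1/p}.
\]
Combining the two displays and infimizing over $p \ge 1$ yields the second claim.

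There is no real obstacle here: the only subtlety is recognizing the norm comparison $\norm{\mtx{X}} \le \pnorm{p}{\mtx{X}}$, which is immediate from the spectral decomposition of a Hermitian matrix. Everything else is a standard passage between a probabilistic bound on a scalar moment and a deviation or mean inequality.
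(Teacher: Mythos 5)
Your argument is correct and is essentially the standard proof that the paper appeals to (it cites Prop.~6.2 of Mackey et al.\ rather than reproving it): fix $p$, use the spectral norm comparison $\norm{\mtx{X}} \le \pnorm{p}{\mtx{X}}$, apply Markov to $\norm{\mtx{X}}^p$ for the tail bound, and apply Jensen with the concavity of $u \mapsto u^{1/p}$ for the expectation bound, then take the infimum over $p \geq 1$. No gaps.
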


\noindent
This statement repeats~\cite[Prop.~6.2]{MackeyJoChFaTr12}.
See also~\cite[App.]{AW02:Strong-Converse} for earlier work.

\subsection{The Matrix Laplace Transform Method} \label{sec:matrix-laplace}

We can also obtain exponential concentration inequalities from a matrix
version of the moment generating function.

\begin{defn}[Trace Mgf]
Let $\mtx{X}$ be a random Hermitian matrix.  The
\term{(normalized) trace moment generating function} of $\mtx{X}$ is defined as
$$
m(\theta) \defby m_{\mtx{X}}(\theta) \defby \Expect \ntr \econst^{\theta \mtx{X}}
\quad\text{for $\theta \in \R$.}
$$
\end{defn}

\noindent
We believe this definition is due to~\cite{AW02:Strong-Converse}.

The following proposition is an extension of Bernstein's method.
It converts bounds for the trace mgf
of a random matrix into bounds on its maximum eigenvalue.

\begin{prop} [Matrix Laplace Transform Method] \label{prop:matrix-laplace}
Let $\mtx{X} \in \Sym{d}$ be a random matrix with normalized trace mgf
$m(\theta) \defby \Expect \ntr \econst^{\theta \mtx{X}}$.  For each $t \in \R$, 
\begin{align} 
\Prob{\lambda_{\max} ( \mtx{X}) \geq t}
	&\leq d \cdot \inf_{\theta > 0} \ \exp\{ -\theta t + \log m(\theta) \},
	\label{eqn:laplace-upper-tail} \\
\Prob{\lambda_{\min}(\mtx{X}) \leq t}
	&\leq d \cdot \inf_{\theta < 0} \ \exp\{ -\theta t + \log m(\theta) \}.
	\label{eqn:laplace-lower-tail}
\end{align}
Furthermore,
\begin{align}
\Expect \lambda_{\max}(\mtx{X})
	\leq \inf_{\theta > 0} \ \frac{1}{\theta}\, [\log d + \log m(\theta)],  \label{eqn:laplace-upper-mean} \\
\Expect \lambda_{\min}(\mtx{X})
	\geq \sup_{\theta < 0} \ \frac{1}{\theta}\, [\log d + \log m(\theta)].  \label{eqn:laplace-lower-mean}
\end{align}
\end{prop}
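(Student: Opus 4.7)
The plan is to reduce all four inequalities to a single key estimate relating the extreme eigenvalues of a Hermitian matrix to the trace of its matrix exponential, and then apply either Markov's inequality or Jensen's inequality as appropriate.

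First I would establish the pointwise bound
$$
\econst^{\theta \lambda_{\max}(\mtx{X})} \leq \trace \econst^{\theta \mtx{X}} = d \cdot \ntr \econst^{\theta \mtx{X}}
\quad\text{for $\theta > 0$},
$$
which follows because $\econst^{\theta \mtx{X}}$ is positive semidefinite with largest eigenvalue $\econst^{\theta \lambda_{\max}(\mtx{X})}$, and the trace dominates any single eigenvalue of a PSD matrix. For the tail bound~\eqref{eqn:laplace-upper-tail}, I would then apply the standard Chernoff/Bernstein trick: for each $\theta>0$,
$$
\Prob{\lambda_{\max}(\mtx{X}) \geq t} = \Prob{\econst^{\theta\lambda_{\max}(\mtx{X})} \geq \econst^{\theta t}} \leq \econst^{-\theta t} \, \Expect \econst^{\theta \lambda_{\max}(\mtx{X})} \leq d\, \econst^{-\theta t}\, m(\theta),
$$
by Markov's inequality. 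Taking the infimum over $\theta>0$ and rewriting in the $\exp\{-\theta t + \log m(\theta)\}$ form gives the first claim.

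For the lower tail~\eqref{eqn:laplace-lower-tail}, I would use the identity $\lambda_{\min}(\mtx{X}) = -\lambda_{\max}(-\mtx{X})$. Then for $\theta<0$, the same pointwise estimate applied to $-\mtx{X}$ yields $\econst^{\theta \lambda_{\min}(\mtx{X})} = \econst^{(-\theta)\lambda_{\max}(-\mtx{X})} \leq d \cdot \ntr \econst^{\theta \mtx{X}}$ (note the sign of $\theta$ flips twice), so that $\Prob{\lambda_{\min}(\mtx{X}) \leq t} = \Prob{\econst^{\theta\lambda_{\min}(\mtx{X})} \geq \econst^{\theta t}} \leq d\,\econst^{-\theta t} m(\theta)$ by Markov. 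Taking the infimum over $\theta<0$ gives the claim.

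For the expectation bounds, I would start from the logarithmic form of the same pointwise estimate: for $\theta > 0$,
$$
\lambda_{\max}(\mtx{X}) \leq \frac{1}{\theta}\log\bigl(d \cdot \ntr \econst^{\theta\mtx{X}}\bigr) = \frac{1}{\theta}\bigl[\log d + \log \ntr \econst^{\theta\mtx{X}}\bigr].
$$
Taking expectation and applying Jensen's inequality to the concave function $\log$ gives
$$
\Expect \lambda_{\max}(\mtx{X}) \leq \frac{1}{\theta}\bigl[\log d + \Expect \log \ntr \econst^{\theta\mtx{X}}\bigr] \leq \frac{1}{\theta}\bigl[\log d + \log m(\theta)\bigr],
$$
and taking the infimum over $\theta>0$ yields~\eqref{eqn:laplace-upper-mean}. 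The lower bound~\eqref{eqn:laplace-lower-mean} follows by the same argument applied to $-\mtx{X}$, with the sign of $\theta$ flipped and the inequality reversed once we divide by $\theta<0$.

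There is no real obstacle here; the argument is a direct matrix analogue of the classical Chernoff and Laplace-transform arguments. The only subtle point is the interchange of the scalar exponential of $\lambda_{\max}(\mtx{X})$ with $\lambda_{\max}(\econst^{\theta\mtx{X}})$, which requires $\theta>0$ so that $x\mapsto \econst^{\theta x}$ is monotone, and a corresponding sign flip in the lower-tail case. Everything else is Markov's and Jensen's inequality applied to the (normalized) trace mgf.
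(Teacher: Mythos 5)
Your proof is correct and is the standard argument for the matrix Laplace transform method; the paper does not prove the proposition itself but simply cites it from \cite{MackeyJoChFaTr12} (which in turn collects it from the references listed), and the argument used there is the same Chernoff/Markov plus Jensen route you follow, hinging on the pointwise bound $\econst^{\theta\lambda_{\max}(\mtx{X})}\le\trace\econst^{\theta\mtx{X}}$.
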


\noindent
Proposition~\ref{prop:matrix-laplace} restates~\cite[Prop.~3.3]{MackeyJoChFaTr12},
which collects results
from~\cite{AW02:Strong-Converse,Oli10:Sums-Random,Tro11:User-Friendly-FOCM,CGT11:Masked-Sample}.

We will use a special case of Proposition~\ref{prop:matrix-laplace}.
This result delineates the consequences of a specific bound for the trace mgf.

\begin{prop}\label{prop:gaussexp}
Let $\mtx{X} \in \Sym{d}$ be a random matrix with normalized trace mgf
$m(\theta) \defby \Expect \ntr \econst^{\theta \mtx{X}}$.  Assume that
there are nonnegative constants $c, v$ for which
$$
\log m(\theta) \leq \frac{v \theta^2}{2(1 - c \theta)}
\quad\text{when $0 \leq \theta < 1/c$.}
$$
Then, for all $t \geq 0$,
\begin{equation}
\Prob{ \lambda_{\max}(\mtx{X}) \geq t}\leq d \exp\left(\frac{-t^2}{2v+2ct}\right).
\end{equation}
Furthermore,
$$
\Expect \lambda_{\max}(\mtx{X}) \leq \sqrt{2 v \log d} + c \log d.
$$
\end{prop}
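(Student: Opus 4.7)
The plan is to invoke Proposition~\ref{prop:matrix-laplace}, substitute the hypothesized bound on $\log m(\theta)$, and then carry out the standard Bernstein-type optimization over the free parameter $\theta$. This is a direct and mechanical calculation, with no conceptual obstacles beyond routine algebra.

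For the tail bound, I would start from~\eqref{eqn:laplace-upper-tail} and restrict the infimum to the range $\theta \in (0, 1/c)$ where the hypothesis is applicable. This gives
\begin{equation*}
\Prob{\lambda_{\max}(\mtx{X}) \geq t} \leq d \cdot \inf_{0 < \theta < 1/c} \exp\left\{ -\theta t + \frac{v\theta^2}{2(1 - c\theta)} \right\}.
\end{equation*}
Differentiating the exponent with respect to $\theta$ suggests the choice $\theta^\star = t/(v + ct)$, which lies in $(0, 1/c)$ for every $t \geq 0$. Substituting this value yields $1 - c\theta^\star = v/(v + ct)$ and, after simplification, the exponent collapses to $-t^2/(2(v + ct)) = -t^2/(2v + 2ct)$, producing the stated inequality.

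For the expectation bound, I would apply~\eqref{eqn:laplace-upper-mean} and again restrict to $\theta \in (0, 1/c)$, reducing the problem to minimizing $(\log d)/\theta + v\theta/(2(1 - c\theta))$. The goal is to balance the two terms and recover the sum $\sqrt{2 v \log d} + c \log d$. The clean choice is
\begin{equation*}
\theta^\star \defby \frac{1}{c + \sqrt{v/(2 \log d)}},
\end{equation*}
which lies strictly in $(0, 1/c)$. A short computation gives $\theta^\star/(1 - c\theta^\star) = \sqrt{2 \log d / v}$, so that $v\theta^\star/(2(1 - c\theta^\star)) = \tfrac{1}{2}\sqrt{2 v \log d}$, while $(\log d)/\theta^\star = c \log d + \tfrac{1}{2}\sqrt{2 v \log d}$. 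Adding these two pieces delivers the claimed bound.

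The main ``difficulty,'' if there is one, is just identifying the right $\theta^\star$ in each case; once the correct substitutions are made, the inequalities drop out immediately. No new matrix analysis is required beyond Proposition~\ref{prop:matrix-laplace}.
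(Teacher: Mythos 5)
Your proof is correct and follows the standard Bernstein-style optimization that the paper itself defers to \cite[Sec.~4.2.4]{MackeyJoChFaTr12}: plug the hypothesized mgf bound into Proposition~\ref{prop:matrix-laplace} and select the stated values of $\theta^\star$. The only cosmetic caveat is the claim that $\theta^\star = t/(v+ct)$ lies strictly in $(0,1/c)$ ``for every $t\geq 0$''; at $t=0$ (or when $v=0$) it sits on the boundary, but those degenerate cases are trivial and do not affect the argument.
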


\noindent
See \cite[Sec.~4.2.4]{MackeyJoChFaTr12} for the proof of Proposition~\ref{prop:gaussexp}.

\section{Matrix Efron--Stein Inequalities}
\label{sec:main}

The main outcome of this paper is a family of Efron--Stein inequalities for random matrices.
These estimates provide powerful tools for controlling the trace moments of a random matrix
in terms of the trace moments of a randomized ``variance proxy.''
Combining these inequalities with the results from Section~\ref{sec:moment-concentration},
we can obtain concentration inequalities for the spectral norm.

\subsection{Setup for Efron--Stein Inequalities}
\label{sec:setup}

Efron--Stein inequalities apply to random matrices constructed from a family
of independent random variables.  Introduce the random vector
$$
Z := (Z_1, \dots, Z_n) \in \metricspace
$$
where $Z_1, \dots, Z_n$ are mutually independent random variables.
We assume that $\metricspace$ is a Polish space to avoid problems
with conditioning~\cite[Thm.~12.2.2]{Dud02:Real-Analysis}.
Let $\mtx{H} : \metricspace \to \Sym{d}$ be a measurable function that takes values in the
space of Hermitian matrices, and construct the centered random matrix
$$
\mtx{X} := \mtx{X}(Z) := \mtx{H}(Z) - \Expect \mtx{H}(Z).
$$
Our goal is to study the behavior of $\mtx{X}$, which describes the fluctuations
of the random matrix $\mtx{H}(Z)$ about its mean value.
We will assume that $\Expect \normsq{\mtx{X}} < \infty$
so that we can discuss variances.

A function of independent random variables will concentrate about its mean
if it depends smoothly on all of its inputs.  We can quantify smoothness by
assessing the influence of each coordinate on the matrix-valued function.
For each coordinate $j$, construct the random vector
$$
Z^{(j)} := (Z_1, \dots, Z_{j-1}, \widetilde{Z}_j, Z_{j+1}, \dots, Z_n) \in \metricspace
$$
where $\widetilde{Z}_j$ is an independent copy of $Z_j$.
It is clear that $Z$ and $Z^{(j)}$ have the same distribution,
and they differ only in coordinate $j$.  Form the random matrices
\begin{equation} \label{eqn:Xj}
\mtx{X}^{(j)} := \mtx{X}(Z^{(j)}) = \mtx{H}(Z^{(j)}) - \Expect \mtx{H}(Z)
\quad\text{for $j = 1, \dots, n$.}
\end{equation}
Note that each $\mtx{X}^{(j)}$ follows the same distribution as $\mtx{X}$.

Efron--Stein inequalities control the fluctuations of the centered random matrix $\mtx{X}$
in terms of the discrepancies between $\mtx{X}$ and the $\mtx{X}^{(j)}$.
To present these results, let us define the \term{variance proxy}
\begin{equation} \label{eq:Vdef}
\mtx{V} := 
	\frac{1}{2} \sum\nolimits_{j=1}^n \Expect\big[ \big( \mtx{X} - \mtx{X}^{(j)} \big)^2 \bcondl Z \big].
\end{equation}
Efron--Stein inequalities bound the trace moments of the random
matrix $\mtx{X}$ in terms of the moments of the variance proxy $\mtx{V}$.
This is similar to the estimate provided by a Poincar{\'e} inequality~\cite[Sec.~3.5]{BoLuMa2013}.

Passing from the random matrix $\mtx{X}$ to the variance proxy $\mtx{V}$ has a number of advantages.
There are many situations where the variance proxy admits an accurate deterministic bound,
so we can reduce problems involving random matrices to simpler matrix arithmetic.
Moreover, the variance proxy is a sum of positive semidefinite terms,
which are easier to control than arbitrary random matrices.
The examples in Sections~\ref{sec:self-bounding}, \ref{sec:matrix-bdd-diff},
and~\ref{sec:compound-sample-covar} support these claims.

\begin{rem}
In the scalar setting, Efron--Stein inequalities~\citep{BoLuMa2003,BoLuMa2005}
can alternatively be expressed in terms of the positive part of the fluctuations:
$$
V_+ := \frac{1}{2} \sum\nolimits_{j=1}^n \Expect\big[ \big(X - X^{(j)} \big)_+^2 \bcondl Z \big]
$$
where $(a)_+ := \max\{0, a\}$.  Our approach can reproduce these 
positive-part bounds in the scalar setting but does not deliver
positive-part expressions in the general matrix setting.  
See Section~\ref{sec:conjectures} for more discussion.
\end{rem}

\subsection{Polynomial Efron--Stein Inequalities for Random Matrices}

The first main result of the paper is a polynomial Efron--Stein inequality for a random matrix
constructed from independent random variables.

\begin{thm}[Matrix Polynomial Efron--Stein]\label{thm:mxmoment}
Instate the notation of Section~\ref{sec:setup}, and assume that $\Expect \normsq{\mtx{X}} < \infty$.
For each natural number $p \geq 1$,
$$
\big( \Expect \pnorm{2p}{\mtx{X}}^{2p} \big)^{1/(2p)} 
	\leq \sqrt{2(2p-1)} \left( \Expect \pnorm{p}{\mtx{V}}^p \right)^{1/(2p)}.
$$
\end{thm}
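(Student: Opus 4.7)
The plan is to combine an exchangeable-pair identity applied coordinate-by-coordinate, a new matrix mean-value trace inequality, and H\"older's inequality.

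For each coordinate $j$, swapping $Z_j$ with its independent copy $\widetilde{Z}_j$ preserves the joint distribution of $Z$, so $(Z, Z^{(j)})$ is an exchangeable pair, which yields
$$
\Expect\trace[(\mtx{X}-\mtx{X}^{(j)})\,\mtx{G}(Z)] \;=\; \tfrac{1}{2}\,\Expect\trace[(\mtx{X}-\mtx{X}^{(j)})(\mtx{G}(Z)-\mtx{G}(Z^{(j)}))]
$$
for every measurable $\mtx{G}\colon\metricspace\to\Sym{d}$. I would open with $\Expect\pnorm{2p}{\mtx{X}}^{2p} = \Expect\trace[\mtx{X}\cdot\mtx{X}^{2p-1}]$, decompose $\mtx{X}$ as the coordinate martingale $\mtx{X} = \sum_j \mtx{D}_j$ with $\mtx{D}_j = \Expect[\mtx{X}-\mtx{X}^{(j)}\mid Z_1,\ldots,Z_j]$ (the last equality using $\widetilde{Z}_j \eqdist Z_j$), and apply the exchangeable-pair identity termwise to $\Expect\trace[\mtx{D}_j\mtx{X}^{2p-1}]$ with $\mtx{G}(Z) = \Expect[\mtx{X}^{2p-1}\mid Z_1,\ldots,Z_j]$. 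Unwinding the conditional expectations with the tower property reduces the problem to controlling the symmetric-difference traces $\trace[(\mtx{X}-\mtx{X}^{(j)})(\mtx{X}^{2p-1}-(\mtx{X}^{(j)})^{2p-1})]$.

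The central technical step is a non-commutative mean-value trace inequality: for all Hermitian $\mtx{A},\mtx{B}$,
$$
\trace[(\mtx{A}-\mtx{B})(\mtx{A}^{2p-1}-\mtx{B}^{2p-1})] \;\leq\; (2p-1)\,\trace\bigl[(\mtx{A}-\mtx{B})^2\cdot\tfrac{1}{2}(\mtx{A}^{2p-2}+\mtx{B}^{2p-2})\bigr],
$$
serving as a surrogate for the scalar identity $a^{2p-1}-b^{2p-1}=(2p-1)\xi^{2p-2}(a-b)$. One route is via the integral representation $\mtx{A}^{2p-1}-\mtx{B}^{2p-1} = \int_0^1 \ddt{t}(t\mtx{A}+(1-t)\mtx{B})^{2p-1}\,\diff{t}$ paired with operator convexity of the square~\eqref{eqn:square-convex} and the trace monotonicity/convexity rules~\eqref{eqn:trace-monotone}--\eqref{eqn:trace-convex}. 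Summing over $j$ and using $\mtx{X}\eqdist\mtx{X}^{(j)}$ to equate the $\mtx{A}^{2p-2}$ and $\mtx{B}^{2p-2}$ contributions, the symmetric-difference squares assemble into $2\mtx{V}$, giving $\Expect\trace\mtx{X}^{2p}\leq 2(2p-1)\,\Expect\trace[\mtx{V}\mtx{X}^{2p-2}]$. A final H\"older inequality for the trace inner product with exponents $p$ and $p/(p-1)$ separates this into $(\Expect\pnorm{p}{\mtx{V}}^p)^{1/p}(\Expect\pnorm{2p}{\mtx{X}}^{2p})^{(p-1)/p}$, and rearranging the resulting self-bounding inequality produces the advertised constant $\sqrt{2(2p-1)}$.

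The main obstacle is the non-commutative mean-value trace inequality above, with the sharp constant $(2p-1)$. Because $\mtx{A}$ and $\mtx{B}$ need not commute, the derivative $\ddt{t}(t\mtx{A}+(1-t)\mtx{B})^{2p-1}$ expands to a sum of non-commutative products rather than the tidy $(t\mtx{A}+(1-t)\mtx{B})^{2p-2}(\mtx{A}-\mtx{B})$, and the trace must be rearranged via cyclic invariance together with matrix AM--GM-type bounds like~\eqref{eqn:matrix-am-gm} to obtain the clean upper bound by $\tfrac{1}{2}(\mtx{A}^{2p-2}+\mtx{B}^{2p-2})$. This is precisely the ``challenging new matrix inequality'' flagged in Section~\ref{sec:intro}, and the exact factor $(2p-1)$ is essential for the final bound to be sharp.
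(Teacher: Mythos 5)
Your proposal takes a genuinely different route from the paper, but it contains a gap in the key reduction step.

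The paper does not use a coordinate-wise martingale decomposition. Because the natural exchangeable pair $(Z, Z')$ with $Z' = Z^{(J)}$ and $J$ uniform does \emph{not} satisfy the matrix Stein pair condition $\Expect[\mtx{X}-\mtx{X}'\mid Z] = \alpha\mtx{X}$, the paper builds a kernel Stein pair via a coupled Markov chain (Proposition~\ref{lem:kernel-coupling}), defines the kernel $\mtx{K}$ as the infinite sum~\eqref{eqn:es-kernel}, applies Theorem~\ref{thm:bdg-inequality} with the trace inequality of Lemma~\ref{lem:pmvti}, and then trades off the conditional variance $\mtx{V}_{\mtx{X}}$ against the kernel conditional variance $\mtx{V}^{\mtx{K}}$ using the free parameter $s\approx 2n$ and the convexity argument of Lemma~\ref{lem:convex-conditional-variance-bound}. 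That extra $\sqrt{2}$ in the theorem's constant $\sqrt{2(2p-1)}$ is precisely the cost of this kernel construction.

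The gap in your argument is the step ``unwinding the conditional expectations with the tower property reduces the problem to controlling the symmetric-difference traces $\trace[(\mtx{X}-\mtx{X}^{(j)})(\mtx{X}^{2p-1}-(\mtx{X}^{(j)})^{2p-1})]$.'' Write $\mathcal{G}_j := \sigma(Z_1,\dots,Z_j,\widetilde{Z}_j)$, so that $\mtx{G}(Z)-\mtx{G}(Z^{(j)}) = \Expect[\mtx{X}^{2p-1}-(\mtx{X}^{(j)})^{2p-1}\mid\mathcal{G}_j]$. After your exchangeable-pair identity you arrive at
\[
\tfrac12\,\Expect\trace\big[(\mtx{X}-\mtx{X}^{(j)})\bigl(\mtx{G}(Z)-\mtx{G}(Z^{(j)})\bigr)\big]
= \tfrac12\,\Expect\trace\big[\Expect[\mtx{X}-\mtx{X}^{(j)}\mid\mathcal{G}_j]\,\Expect[\mtx{X}^{2p-1}-(\mtx{X}^{(j)})^{2p-1}\mid\mathcal{G}_j]\big],
\]
which is a \emph{product of conditional expectations}, not the conditional expectation of a product. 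Since $\mtx{X}-\mtx{X}^{(j)}$ depends on $Z_{j+1},\dots,Z_n$, it is not $\mathcal{G}_j$-measurable, so the tower property does not convert this into $\tfrac12\Expect\trace[(\mtx{X}-\mtx{X}^{(j)})(\mtx{X}^{2p-1}-(\mtx{X}^{(j)})^{2p-1})]$; the two expressions differ by a conditional covariance term with no sign. For $p=1$ the square structure lets Jensen's inequality rescue the argument, but for $p\geq 2$ the factors are different functions and Jensen does not apply. Moreover Lemma~\ref{lem:pmvti} is a pointwise matrix inequality applied to the actual realizations $\mtx{X},\mtx{X}^{(j)}$, not to their conditional expectations, so it cannot be invoked on the terms you have. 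Finally, observe that if your chain of deductions were valid it would deliver the constant $\sqrt{2p-1}$, strictly better than the paper's $\sqrt{2(2p-1)}$, which the authors explicitly flag as suboptimal; producing the sharper constant from a simpler argument should itself raise a flag that something has been skipped.
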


\noindent
The proof appears in Section~\ref{sec:poly-moment}.

We can regard Theorem~\ref{thm:mxmoment} as a matrix extension of the scalar
concentration inequality~\cite[Thm.~1]{BoLuMa2005}, which was obtained using the entropy method.
In contrast, our results depend on a different style of argument, based on the theory of exchangeable
pairs~\citep{Stein86,Cha08:Concentration-Inequalities}.  Our approach is novel, even in the scalar setting.
Unfortunately, it leads to slightly worse constants.

Theorem~\ref{thm:mxmoment} allows us to control the trace moments of a random Hermitian matrix
in terms of the trace moments of the variance proxy.  We can obtain
probability inequalities for the spectral norm by combining this result
with the matrix Chebyshev inequality, Proposition~\ref{prop:chebyshev}.

\subsection{Exponential Efron--Stein Inequalities for Random Matrices} \label{sec:efronstein}

The second main result of the paper is an exponential Efron--Stein inequality for
a random matrix built from independent random variables.

\begin{thm}[Matrix Exponential Efron--Stein]\label{thm:mxEfronStein}
Instate the notation of Section~\ref{sec:setup},
and assume that $\norm{\mtx{X}}$ is bounded.
When $\abs{\theta} \leq \sqrt{\psi/2}$,
\begin{equation}\label{eq:momgenVeq}
\log \Expect \ntr \econst^{\theta \mtx{X}}
	\leq \frac{\theta^2/\psi}{1- 2\theta^2/\psi}
	\log \Expect \ntr \econst^{\psi\mtx{V}}.
\end{equation}
\end{thm}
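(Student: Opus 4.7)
The plan is to run an exchangeable-pair argument, adapting Chatterjee's kernel-coupling construction from the scalar setting to random matrices. I first form the exchangeable pair $(Z, Z')$ by drawing $J$ uniformly on $\{1, \dots, n\}$ and setting $Z' := Z^{(J)}$, so that $\mtx{X}' := \mtx{X}(Z')$ has the same distribution as $\mtx{X}$ and
\[
\mtx{V} \;=\; \tfrac{n}{2}\, \Expect\big[(\mtx{X} - \mtx{X}')^2 \bcondl Z\big].
\]
Because $\Expect[\mtx{X}' \mid Z]$ is not generally a scalar multiple of $\mtx{X}$, the pair is not a matrix Stein pair. Following Chatterjee, I manufacture a matrix-valued antisymmetric kernel $\mtx{K}(Z, Z')$ by running two coupled copies of the coordinate-resampling Markov chain from $Z$ and $Z'$ in parallel and summing the resulting matrix differences; a telescoping argument then yields the kernel identity $\Expect[\mtx{K}(Z, Z') \mid Z] = \mtx{X}$, together with a quantitative control of $\mtx{K}$ in terms of the one-step difference $\mtx{X} - \mtx{X}'$ produced by the mixing of the chain.

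With the kernel in hand, differentiation of $m(\theta) := \Expect \ntr \econst^{\theta \mtx{X}}$, combined with exchangeability and the cyclic property of the trace, gives
\[
m'(\theta) \;=\; \Expect \ntr\big[\mtx{K}(Z, Z')\, \econst^{\theta \mtx{X}}\big] \;=\; \tfrac{1}{2}\, \Expect \ntr\big[\mtx{K}(Z, Z')\,\big(\econst^{\theta \mtx{X}} - \econst^{\theta \mtx{X}'}\big)\big].
\]
I then apply a non-commutative mean-value trace inequality of roughly the form
\[
\ntr\big[\mtx{A}\,\big(\econst^{\theta \mtx{B}} - \econst^{\theta \mtx{C}}\big)\big] \;\leq\; \tfrac{\theta}{2}\, \ntr\big[\mtx{A}\,(\mtx{B} - \mtx{C})\,\big(\econst^{\theta \mtx{B}} + \econst^{\theta \mtx{C}}\big)\big],
\]
followed by Cauchy--Schwarz to trade the kernel for the one-step difference and the identity $\Expect[(\mtx{X}-\mtx{X}')^2 \mid Z] = \tfrac{2}{n}\mtx{V}$. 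The net outcome should be a matrix modified log-Sobolev inequality of the shape
\[
\theta\, m'(\theta) - m(\theta)\, \log m(\theta) \;\leq\; 2\theta^2\, \Expect \ntr\big[\mtx{V}\, \econst^{\theta \mtx{X}}\big],
\]
the factor $2$ being precisely what propagates into the statement of the theorem.

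The final step converts the self-bounding inequality into a bound on $\log m(\theta)$. A Donsker--Varadhan--type variational formula for the normalized trace entropy, established via Klein's inequality, gives for each $\psi > 0$
\[
\psi\, \Expect \ntr[\mtx{V}\, \econst^{\theta \mtx{X}}] \;\leq\; \big(\theta\, m'(\theta) - m(\theta)\, \log m(\theta)\big) + m(\theta)\, \log \Expect \ntr \econst^{\psi \mtx{V}}.
\]
Substituting this into the modified log-Sobolev inequality absorbs the entropy term on one side and delivers
\[
\big(1 - 2\theta^2/\psi\big)\, \log m(\theta) \;\leq\; (\theta^2/\psi)\, \log \Expect \ntr \econst^{\psi \mtx{V}}
\]
whenever $2\theta^2 < \psi$, which is the advertised inequality after rearrangement.

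The main obstacle is the non-commutative mean-value trace inequality in the middle paragraph. The scalar identity $e^{\theta b} - e^{\theta c} = \theta (b - c) \int_0^1 e^{\theta(sb + (1 - s)c)}\, ds$ has no direct matrix analogue because $\mtx{B}$ and $\mtx{C}$ need not commute, and one must argue via double operator integrals or a semidefinite dominance argument---the ``challenging new matrix inequalities'' promised in the introduction---to obtain a version of the inequality with a constant sharp enough to survive the subsequent Cauchy--Schwarz manipulations and still reproduce the correct factor $2$. Controlling the kernel $\mtx{K}$ by the single-step difference $\mtx{X} - \mtx{X}'$ via the mixing of the coordinate-resampling chain is the other delicate ingredient.
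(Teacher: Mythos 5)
Your plan follows the same broad path as the paper --- exchangeable pair by uniform coordinate resampling, Chatterjee-style kernel coupling, a matrix mean-value trace inequality, and Young's inequality for matrix entropy (your ``Donsker--Varadhan'') --- so the architecture is right. However, the final step contains a genuine gap. Substituting the entropy bound
\[
\psi\, \Expect \ntr[\mtx{V}\, \econst^{\theta \mtx{X}}] \;\leq\; \big(\theta\, m'(\theta) - m(\theta)\, \log m(\theta)\big) + m(\theta)\, \log \Expect \ntr \econst^{\psi \mtx{V}}
\]
into your proposed modified log-Sobolev inequality $\theta m'(\theta) - m(\theta)\log m(\theta) \leq 2\theta^2\,\Expect\ntr[\mtx{V}\econst^{\theta\mtx{X}}]$ does \emph{not} ``deliver'' the claimed bound on $\log m(\theta)$. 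What it delivers is a self-bounding inequality for the \emph{entropy}, namely $(1 - 2\theta^2/\psi)\big(\theta m'(\theta) - m(\theta)\log m(\theta)\big) \leq (2\theta^2/\psi)\, m(\theta)\,\log\Expect\ntr\econst^{\psi\mtx{V}}$. Passing from a bound on $\theta m'(\theta) - m(\theta)\log m(\theta)$ to a bound on $\log m(\theta)$ still requires a Herbst-type argument: dividing by $m(\theta)$, writing $g=\log m$, recognizing $\theta g' - g = \theta^2\,\frac{d}{d\theta}(g/\theta)$, and integrating from $0$ to $\theta$. Doing that with your constants produces $\log m(\theta) \leq L\,\theta\sqrt{2/\psi}\,\operatorname{arctanh}\big(\theta\sqrt{2/\psi}\big)$, which is \emph{weaker} than the stated $\frac{\theta^2/\psi}{1-2\theta^2/\psi}L$ for the full range of $\theta$. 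The paper instead discards the $-m(\theta)\log m(\theta)$ term (using $\log m(\theta)\geq 0$) so that the entropy step bounds $m'(\theta)$ directly; this gives the cleaner differential inequality $m'(\theta) \leq \theta m(\theta)\, r(\psi) + (\theta^2/\psi)\, m'(\theta)$, whose integration produces the stated constant after a final change of variables $\psi\mapsto\psi/2$.

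Two further points are glossed over. First, your ``modified log-Sobolev inequality'' is written directly in terms of the Efron--Stein variance proxy $\mtx{V}$, but what actually comes out of the kernel-pair argument involves \emph{both} the conditional variance $\mtx{V}_{\mtx{X}}$ and the kernel conditional variance $\mtx{V}^{\mtx{K}}$. Converting the latter into $\mtx{V}$ is not a pointwise semidefinite domination; it requires exploiting the geometric decay rate $(1-1/n)^{i/2}$ of the coordinate-resampling coupling together with the representation $\mtx{\Gamma}_i = \Expect[\mtx{W}_{(i)} \mid Z]$ with $\mtx{W}_{(i)} \sim \mtx{\Gamma}_0$, and Jensen's inequality for the trace-convex function (the paper's Lemmas~\ref{lem:conditional-variance-bound} and~\ref{lem:convex-conditional-variance-bound}). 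This is where the extra factor of $2$ that shows up in the $1 - 2\theta^2/\psi$ denominator actually comes from, via $\sum_i (1-1/n)^{i/2} < 2n$. Second, the trace inequality you write down, $\ntr\big[\mtx{A}\,(\econst^{\theta\mtx{B}}-\econst^{\theta\mtx{C}})\big] \leq \tfrac{\theta}{2}\ntr\big[\mtx{A}\,(\mtx{B}-\mtx{C})\,(\econst^{\theta\mtx{B}}+\econst^{\theta\mtx{C}})\big]$, is not a valid matrix inequality --- the right-hand side is not even guaranteed to be real, and the sign can go wrong. The inequality the paper actually proves (Lemma~\ref{lem:emvti}) bounds $\abs{\ntr[\mtx{C}(\econst^{\mtx{A}}-\econst^{\mtx{B}})]}$ by $\tfrac14\ntr\big[(s(\mtx{A}-\mtx{B})^2 + s^{-1}\mtx{C}^2)(\econst^{\mtx{A}}+\econst^{\mtx{B}})\big]$, with the squares already in place and an $s$ parameter that plays the role of your deferred Cauchy--Schwarz optimization; the proof goes via Young's inequality for commuting left/right multiplication operators and an operator Cauchy--Schwarz, not a direct analogue of the scalar integral representation.
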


\noindent
The proof appears in Section~\ref{sec:exp-es-proof}.

Theorem~\ref{thm:mxEfronStein} is a matrix extension of the exponential
Efron--Stein inequalities for scalar random variables established in~\cite[Thm.~1]{BoLuMa2003}
by means of the entropy method.  As in the polynomial case, we use a new argument based on
exchangeable pairs. 

Theorem~\ref{thm:mxEfronStein} allows us to control trace exponential moments
of a random Hermitian matrix in terms of the trace exponential moments of the
variance proxy.  We arrive at probability inequalities for the spectral norm
by combining this result with the matrix Laplace transform method,
Proposition~\ref{prop:matrix-laplace}.  Although bounds on polynomial
trace moments are stronger than bounds on exponential
trace moments~\cite[Sec.~6]{MackeyJoChFaTr12},
the exponential inequalities are often more useful in practice.

\begin{rem}[Weaker Integrability Conditions]
Theorem~\ref{thm:mxEfronStein} holds under weaker
regularity conditions on $\mtx{X}$, but we have
chosen to present the result here to avoid
distracting technical arguments.
\end{rem}

\subsection{Rectangular Matrices}

Suppose now that $\mtx{H} : \metricspace \to \C^{d_1 + d_2}$ is a measurable
function taking \emph{rectangular} matrix values.  We can also develop Efron--Stein
inequalities for the random rectangular matrix $\mtx{X} := \mtx{H}(Z) - \Expect \mtx{H}(Z)$
as a formal consequence of the results for Hermitian random matrices.

The approach is based on a device from operator theory called the
\term{Hermitian dilation}, which is defined as
$$
\coll{H}(\mtx{B}) := \begin{bmatrix} \mtx{0} & \mtx{B} \\ \mtx{B}^\adj & \mtx{0} \end{bmatrix}
	\in \Sym{d_1 + d_2}
	\quad\text{for $\mtx{B} \in \C^{d_1 + d_2}$.}
$$
To obtain Efron--Stein inequalities for random rectangular matrices,
we simply apply Theorem~\ref{thm:mxmoment}
and Theorem~\ref{thm:mxEfronStein} to the dilation $\coll{H}(\mtx{X})$.
We omit the details.  For more information about these arguments,
see \cite[Sec.~2.6]{Tro11:User-Friendly-FOCM}, \cite[Sec.~8]{MackeyJoChFaTr12}, or
\cite[Sec.~2.1.13]{Tro14:User-Friendly-FnTML}.

\section{Example: Self-Bounded Random Matrices}
\label{sec:self-bounding}

As a first example, we consider the case where the variance proxy
is dominated by an affine function of the centered random matrix.

\begin{cor}[Self-Bounded Random Matrices]
\label{cor:self-bounded}
Instate the notation of Section~\ref{sec:setup}.
Assume that $\norm{\mtx{X}}$ is bounded, and suppose that
there are nonnegative constants $c, v$ for which
\begin{equation} \label{eqn:self-bound}
\mtx{V} \psdle v \Id + c \mtx{X}
\quad\text{almost surely.}
\end{equation}
Then, for all $t \geq 0$,
$$
\Prob{ \lambda_{\max}(\mtx{X}) \geq t }
	\leq d \exp\left( \frac{-t^2}{4v + 6ct} \right).
$$
Furthermore,
$$
\Expect \lambda_{\max}(\mtx{X}) \leq \sqrt{4v \log d} + 3 c \log d.
$$
\end{cor}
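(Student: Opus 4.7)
The plan is to apply the exponential matrix Efron--Stein inequality (Theorem~\ref{thm:mxEfronStein}) together with the self-bounding hypothesis to derive a self-referential inequality for the normalized trace mgf $m(\theta) := \Expect \ntr \econst^{\theta \mtx{X}}$, solve it in closed form, and then invoke Proposition~\ref{prop:gaussexp} to read off the stated tail and mean bounds.

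For any $\psi > 0$, multiplying~\eqref{eqn:self-bound} by $\psi$ and applying trace monotonicity~\eqref{eqn:trace-monotone} to the increasing function $t \mapsto \econst^{t}$ (using that $\Id$ commutes with $\mtx{X}$) yields
\begin{equation*}
\ntr \econst^{\psi \mtx{V}} \leq \ntr \econst^{\psi v \Id + \psi c \mtx{X}} = \econst^{\psi v} \, \ntr \econst^{\psi c \mtx{X}}.
\end{equation*}
Taking expectations and logarithms gives $\log \Expect \ntr \econst^{\psi \mtx{V}} \leq \psi v + \log \Expect \ntr \econst^{\psi c \mtx{X}}$. Now substitute this into the exponential Efron--Stein bound~\eqref{eq:momgenVeq} and make the pivotal choice $\psi = \theta/c$ (assume $c > 0$; when $c = 0$, the same computation with any fixed $\psi$ yields a subgaussian bound directly). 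Then $\psi c = \theta$, so the right-hand side references $m(\theta)$ itself, the constraint $|\theta| \leq \sqrt{\psi/2}$ reduces to $\theta \leq 1/(2c)$, and a short calculation produces
\begin{equation*}
\log m(\theta) \leq \frac{c\theta}{1 - 2c\theta}\left[\frac{v\theta}{c} + \log m(\theta)\right].
\end{equation*}
Rearranging yields $(1 - 3c\theta) \log m(\theta) \leq v \theta^2$, whence
\begin{equation*}
\log m(\theta) \leq \frac{v \theta^2}{1 - 3c\theta} = \frac{(2v)\, \theta^2}{2(1 - (3c)\theta)}
\quad\text{for } 0 \leq \theta < \tfrac{1}{3c}.
\end{equation*}

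The right-hand side is precisely the hypothesis of Proposition~\ref{prop:gaussexp} with parameters $v' = 2v$ and $c' = 3c$. That proposition immediately delivers
\begin{equation*}
\Prob{\lambda_{\max}(\mtx{X}) \geq t} \leq d \exp\!\left(\frac{-t^2}{2v' + 2c't}\right) = d \exp\!\left(\frac{-t^2}{4v + 6ct}\right),
\end{equation*}
and $\Expect \lambda_{\max}(\mtx{X}) \leq \sqrt{2v' \log d} + c' \log d = \sqrt{4v \log d} + 3c \log d$, matching the stated bounds exactly.

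The main obstacle is engineering the self-referential cancellation: one must choose $\psi$ so that the exponent $\psi c$ on the Efron--Stein right-hand side matches the parameter $\theta$ of the mgf on the left, while simultaneously respecting the integrability constraint $|\theta| \leq \sqrt{\psi/2}$. The denominator $1 - 3c\theta$ that emerges from solving the resulting linear inequality in $\log m(\theta)$ is what forces the constants $4v$ and $6c$ (rather than $2v$ and $2c$) in the final concentration bounds.
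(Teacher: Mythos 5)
Your proposal is correct and follows exactly the paper's own argument: apply Theorem~\ref{thm:mxEfronStein}, insert the self-bound via trace monotonicity~\eqref{eqn:trace-monotone}, set $\psi = \theta/c$ to close the self-reference, solve the resulting linear inequality in $\log m(\theta)$ to get $\log m(\theta) \leq v\theta^2/(1-3c\theta)$, and invoke Proposition~\ref{prop:gaussexp} with $v'=2v$, $c'=3c$. The only addition is your explicit (and correct) remark about the $c=0$ case, which the paper leaves implicit.
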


\begin{proof}
The result is an easy consequence of the exponential Efron--Stein inequality
for random matrices, Theorem~\ref{thm:mxEfronStein}.  When $0 \leq \theta < 1/(3c)$,
we may calculate that
\begin{align}
\log m_{\mtx{X}}(\theta) = \log \Expect \ntr \econst^{\theta \mtx{X}}
	&\leq \frac{\theta^2 / \psi}{1 - 2\theta^2/\psi}
	\log \Expect \ntr \econst^{\psi \mtx{V}} \notag \\
	&\leq \frac{\theta^2 / \psi}{1 - 2\theta^2/\psi}
	\log \Expect \ntr \econst^{\psi (v \Id + c \mtx{X})} \notag \\
	&= \frac{\theta^2 / \psi}{1 - 2\theta^2/\psi} \left( \psi v + \log \Expect \ntr \econst^{\psi c \mtx{X}} \right).
	\label{eqn:self-bdd-pf}
\end{align}
In the first inequality, we can introduce the bound~\eqref{eqn:self-bound} for $\mtx{V}$ 
because the trace exponential is monotone~\eqref{eqn:trace-monotone}.
Select $\psi = \theta/c$ to obtain a copy of
$m_{\mtx{X}}(\theta)$ on the right-hand side of~\eqref{eqn:self-bdd-pf}.
Solve for $m_{\mtx{X}}(\theta)$ to reach
\begin{equation} \label{eqn:self-bdd-pf-2}
\log m_{\mtx{X}}(\theta)
	\leq \frac{v \theta^2}{1 - 3c \theta}
	\quad\text{when $0 \leq \theta < 1/(3c)$.}
\end{equation}
Invoke Proposition~\ref{prop:gaussexp} to complete the proof.
\end{proof}

The hypothesis~\eqref{eqn:self-bound} is analogous with the assumptions
in the result~\cite[Thm.~4.1]{MackeyJoChFaTr12}.  In Section~\ref{sec:matrix-bdd-diff},
we explain how this estimate supports a matrix version of the bounded difference inequality.
But the result also extends well beyond this example.

\section{Example: Matrix Bounded Differences}
\label{sec:matrix-bdd-diff}

The matrix bounded difference inequality~\citep{Tro11:User-Friendly-FOCM,MackeyJoChFaTr12}
controls the fluctuations of a matrix-valued function of independent
random variables.
This result has been used to analyze algorithms for
multiclass classification~\citep{MachartRa12,MorvantKoRa12},
crowdsourcing~\citep{DalviDaKuRa13}, and non-differentiable optimization~\citep{ZhouHu13}.

Let us explain how to derive a refined version of the matrix bounded differences inequality
from Theorem~\ref{thm:mxEfronStein}.

\begin{cor}[Matrix Bounded Differences] \label{cor:bound-diff}
Instate the notation of Section~\ref{sec:setup}.
Assume there are deterministic matrices $\mtx{A}_1, \dots, \mtx{A}_n \in \Sym{d}$
for which
\begin{equation} \label{eqn:bdd-diff-hyp}
\big(\mtx{H}(z_1, \dots, z_n) - \mtx{H}(z_1, \dots, z_j', \dots, z_n)\big)^2
	\psdle \mtx{A}_j^2
	\quad\text{for each index $j$.}
\end{equation}
In this expression, $z_k$ and $z_k'$ range over all possible value of $Z_k$.
Compute the boundedness parameter
\begin{equation} \label{eqn:bdd-diff-sigma2}
\sigma^2 \defby \norm{ \sum\nolimits_{j=1}^n \mtx{A}_j^2 }.
\end{equation}
Then, for all $t\geq 0$, 
$$
\Prob{ \lambda_{\max}\left( \mtx{H}(Z) - \Expect \mtx{H}(Z) \right) \geq t }
	\leq d \cdot \econst^{-t^2/(2\sigma^2)}.
$$
Furthermore,
$$
\Expect \lambda_{\max}\left( \mtx{H}(Z) - \Expect \mtx{H}(Z) \right) \leq \sigma \sqrt{2\log d}.
$$ 
\end{cor}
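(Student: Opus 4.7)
The plan is to reduce Corollary~\ref{cor:bound-diff} to the self-bounded case (Corollary~\ref{cor:self-bounded}) by showing that under the bounded differences hypothesis the variance proxy $\mtx{V}$ is dominated by a \emph{deterministic} multiple of the identity, which corresponds to the self-bounding parameters $v = \sigma^2/2$ and $c = 0$.

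First, I would observe that the deterministic centering cancels in differences: since $\mtx{X} = \mtx{H}(Z) - \Expect\mtx{H}(Z)$ and $\mtx{X}^{(j)} = \mtx{H}(Z^{(j)}) - \Expect\mtx{H}(Z)$, we have
$$
\mtx{X} - \mtx{X}^{(j)} = \mtx{H}(Z) - \mtx{H}(Z^{(j)}).
$$
The hypothesis~\eqref{eqn:bdd-diff-hyp} applied to the realized values of $Z$ and $Z^{(j)}$ (which differ only in coordinate $j$) gives $(\mtx{X} - \mtx{X}^{(j)})^2 \psdle \mtx{A}_j^2$ almost surely. Taking conditional expectation with respect to $Z$ preserves the semidefinite order, so
$$
\Expect\bigl[(\mtx{X} - \mtx{X}^{(j)})^2 \bcondl Z\bigr] \psdle \mtx{A}_j^2
\qquad\text{for each $j = 1,\dots,n$.}
$$
Summing over $j$ and dividing by two, the definition~\eqref{eq:Vdef} of the variance proxy yields
$$
\mtx{V} \psdle \tfrac{1}{2} \sum\nolimits_{j=1}^n \mtx{A}_j^2 \psdle \tfrac{\sigma^2}{2}\, \Id,
$$
where the last inequality uses the fact that $\sum_j \mtx{A}_j^2$ is positive semidefinite with operator norm $\sigma^2$ by definition~\eqref{eqn:bdd-diff-sigma2}.

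Next, I would invoke Corollary~\ref{cor:self-bounded} with the self-bounding parameters $v = \sigma^2/2$ and $c = 0$. The tail bound there becomes
$$
\Prob{\lambda_{\max}(\mtx{X}) \geq t} \leq d\exp\!\left(\frac{-t^2}{4v + 6ct}\right) = d\exp\!\left(\frac{-t^2}{2\sigma^2}\right),
$$
and the mean bound becomes $\Expect \lambda_{\max}(\mtx{X}) \leq \sqrt{4v\log d} + 3c\log d = \sigma\sqrt{2\log d}$. Since $\lambda_{\max}(\mtx{X}) = \lambda_{\max}(\mtx{H}(Z) - \Expect\mtx{H}(Z))$, this delivers exactly the two conclusions in the statement.

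The only place that requires any care is the factor of $1/2$ in the definition of $\mtx{V}$: it is essential that it match with the factor of $1/2$ in $v = \sigma^2/2$ so that the final exponent is the clean $-t^2/(2\sigma^2)$. There is no real obstacle; the argument is entirely a bookkeeping reduction, with the real work already done inside Theorem~\ref{thm:mxEfronStein} and Corollary~\ref{cor:self-bounded}. (I should also note that $\norm{\mtx{X}}$ is automatically bounded under~\eqref{eqn:bdd-diff-hyp}, since $\mtx{X}$ can be written as a telescoping sum of $n$ differences whose squares are uniformly dominated by the $\mtx{A}_j^2$; this verifies the boundedness hypothesis needed to apply the exponential Efron--Stein inequality.)
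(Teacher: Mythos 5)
Your proposal is correct and follows essentially the same route as the paper: bound $\mtx{V}$ by $\tfrac{1}{2}\sum_j \mtx{A}_j^2 \psdle \tfrac{\sigma^2}{2}\Id$ and then invoke Corollary~\ref{cor:self-bounded} with $v = \sigma^2/2$ and $c = 0$. You additionally flag that the boundedness hypothesis on $\norm{\mtx{X}}$ needed for Corollary~\ref{cor:self-bounded} follows from a telescoping argument under~\eqref{eqn:bdd-diff-hyp}, a small regularity point the paper leaves implicit.
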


\begin{proof}
Observe that the variance proxy satisfies
$$
\begin{aligned}
\mtx{V} &= \frac{1}{2} \sum_{j=1}^n \Expect \big[ \big(\mtx{X} - \mtx{X}^{(j)} \big)^2 \bcondl Z \big] \\
	&= \frac{1}{2} \sum_{j=1}^n \Expect\big[ \big(\mtx{H}(Z) - \mtx{H}(Z^{(j)}) \big)^2 \bcondl Z \big]
	\psdle \frac{1}{2} \sum_{j=1}^n \mtx{A}_j^2.
\end{aligned}
$$
It follows from the definition~\eqref{eqn:bdd-diff-sigma2} that $\mtx{V} \psdle \half \sigma^2 \, \Id$.
Invoke Corollary~\ref{cor:self-bounded} to complete the argument.
\end{proof}

\begin{rem}[Related Work]
Corollary~\ref{cor:bound-diff} improves the constants in~\cite[Cor.~7.5]{Tro11:User-Friendly-FOCM},
and it removes an extraneous assumption from~\cite[Cor.~11.1]{MackeyJoChFaTr12}.
It is possible to further improve the constants in the exponent by a factor of 2 to obtain a bound of the form $d \cdot \econst^{-t^2/\sigma^2}$; see the original argument in \citep{PMT13:Deriving-Matrix}.
\end{rem}

\begin{rem}[Weaker Hypotheses]
We can relax the assumption~\eqref{eqn:bdd-diff-hyp} to read
\begin{align*}
\sum_{j=1}^n \big(\mtx{H}(z_1, \dots, z_n) - \mtx{H}(z_1, \dots, z_j', \dots, z_n)\big)^2
	&\psdle \mtx{A}^2
\end{align*}
where $\mtx{A} \in \Sym{d}$ is a fixed Hermitian matrix.  In this case, $\sigma^2 := \norm{\mtx{A}}^2$.
\end{rem}

\section{Application: Compound Sample Covariance Matrices}
\label{sec:compound-sample-covar}

In this section, we consider the \emph{compound sample covariance matrix}:
\begin{equation}
\label{eqn:compound-covariance}
\widehat{\mtx{\Lambda}}_n := \frac{1}{n}\mtx{Z} \mtx{B}\mtx{Z}^\adj.
\end{equation}
The central matrix $\mtx{B} \in \Sym{n}$ is fixed, and
the columns of $\mtx{Z} \in \C^{p \times n}$ are random vectors
drawn independently from a common distribution on $\C^p$.

When the matrix $\mtx{B} = n^{-1} \Id$, the compound sample covariance matrix $\widehat{\mtx{\Lambda}}_n$
reduces to the classical empirical covariance $n^{-1} \mtx{ZZ}^\adj$.  The
latter matrix can be written as a sum of independent rank-one matrices,
and its concentration properties are well established~\citep{AdamczakComptes}.
For general $\mtx{B}$, however, the random matrix $\widehat{\mtx{\Lambda}}_n$
cannot be expressed as an independent sum,
so the behavior becomes significantly harder to characterize.
See, for example, the analysis of~\cite{Sol14:Error-Bound}.

The most common example of a compound sample covariance matrix
is the compound Wishart matrix~\citep{Speicher}, where
the columns of $\mtx{Z}$ are drawn from a multivariate normal distribution.
These matrices have been used to estimate the sample covariance
under correlated sampling~\citep{burda2011applying}.  They also
arise in risk estimation for portfolio management~\citep{collins2013compound}.

We will use Theorem~\ref{thm:mxEfronStein} to develop an exponential
concentration inequality for one class of compound sample covariance matrices.

\begin{thm}[Concentration of Compound Sample Covariance]\label{prop:compound}
Suppose that the entries of $\mtx{Z}\in \C^{p\times n}$ are iid random variables
with mean zero, variance $\sigma^2$, and magnitude bounded by $L$.
Let $\mtx{B} \in \Sym{n}$ be fixed.  For any $t\geq 0$ we have
\begin{multline*}
\Prob{ \norm{ \mtx{Z}\mtx{B}\mtx{Z}^\adj - \Expect[\mtx{Z}\mtx{B}\mtx{Z}^\adj] } \geq t } \\
	\leq 2p \exp\left( \frac{-t^2}{44(p \sigma^2 + L^2) \fnormsq{\mtx{B}} + 32\sqrt{3} Lp \norm{\mtx{B}} t} \right).
\end{multline*}
Furthermore,
$$
\Expect \norm{ \mtx{Z}\mtx{B}\mtx{Z}^\adj - \Expect[\mtx{Z}\mtx{B}\mtx{Z}^\adj] }
\le 
2\sqrt{44(p \sigma^2 + L^2)\log p  \fnormsq{\mtx{B}}} + 32\sqrt{3} Lp \log p \norm{\mtx{B}}.
$$ 
\end{thm}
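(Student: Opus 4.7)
The natural plan is to apply the matrix exponential Efron--Stein inequality, \thmref{mxEfronStein}, taking the $np$ scalar entries $\{Z_{ij}\}$ of $\mtx{Z}$ as the independent random variables and $\mtx{H}(\mtx{Z}) = \mtx{Z}\mtx{B}\mtx{Z}^\adj$. Since $\mtx{B}$ is Hermitian, the centered matrix $\mtx{X} := \mtx{Z}\mtx{B}\mtx{Z}^\adj - \Expect[\mtx{Z}\mtx{B}\mtx{Z}^\adj]$ is Hermitian, so no dilation is needed. The prefactor $2p$ in the stated tail inequality will arise by applying a one-sided estimate for $\lambda_{\max}$ to both $+\mtx{X}$ and $-\mtx{X}$ and taking a union bound.

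First I would derive an explicit formula for the perturbation. Writing $\widetilde Z_{ij}$ for the independent copy, $\delta := \widetilde Z_{ij} - Z_{ij}$, and $\vct{u}_j := \mtx{Z}\mtx{B}_{:,j}$ for the $j$-th column of $\mtx{Z}\mtx{B}$, direct expansion of $(\mtx{Z} + \delta\mathbf{E}_{ij})\mtx{B}(\mtx{Z} + \delta\mathbf{E}_{ij})^\adj - \mtx{Z}\mtx{B}\mtx{Z}^\adj$ yields
$$
\mtx{X}^{(ij)} - \mtx{X} = \delta\,\mathbf{e}_i\vct{u}_j^\adj + \bar\delta\,\vct{u}_j\mathbf{e}_i^\adj + |\delta|^2 B_{jj}\,\mathbf{e}_i\mathbf{e}_i^\adj.
$$
Squaring this expression and invoking~\eqref{eqn:re-square} together with the elementary bound $(\mtx{A}+\mtx{C})^2 \psdle 2(\mtx{A}^2 + \mtx{C}^2)$ produces a semidefinite upper bound in terms of the rank-one pieces $\enormsq{\vct{u}_j}\mathbf{e}_i\mathbf{e}_i^\adj$, $\vct{u}_j\vct{u}_j^\adj$, and $B_{jj}^2\mathbf{e}_i\mathbf{e}_i^\adj$ with coefficients $|\delta|^2$ or $|\delta|^4$. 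Taking the conditional expectation over $\widetilde Z_{ij}$ replaces these by $\Expect|\delta|^2 = 2\sigma^2$ and $\Expect|\delta|^4 \leq 4L^2\Expect|\delta|^2 = 8L^2\sigma^2$.

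Summing over $(i,j)$ then collapses the rank-one pieces using the identities $\sum_i \mathbf{e}_i\mathbf{e}_i^\adj = \Id_p$, $\sum_j \enormsq{\vct{u}_j} = \fnormsq{\mtx{Z}\mtx{B}}$, $\sum_j \vct{u}_j\vct{u}_j^\adj = \mtx{Z}\mtx{B}^2\mtx{Z}^\adj$, $\sum_i 1 = p$, and $\sum_j B_{jj}^2 \leq \fnormsq{\mtx{B}}$, yielding an almost sure bound of the form
$$
\mtx{V} \psdle \alpha\,\sigma^2\fnormsq{\mtx{Z}\mtx{B}}\,\Id_p + \beta\,p\sigma^2\,\mtx{Z}\mtx{B}^2\mtx{Z}^\adj + \gamma\,L^2\sigma^2\fnormsq{\mtx{B}}\,\Id_p
$$
for absolute constants $\alpha,\beta,\gamma$.

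The main obstacle is converting this estimate into the self-bounded form required by \cororef{self-bounded}, that is, $\mtx{V} \psdle v\,\Id_p + c(\pm\mtx{X})$. I would split $\mtx{B} = \mtx{B}_+ - \mtx{B}_-$ into its positive and negative parts; since $\mtx{B}_+\mtx{B}_- = \mtx{0}$, we get $\mtx{B}^2 \psdle \norm{\mtx{B}}|\mtx{B}|$ and hence $\mtx{Z}\mtx{B}^2\mtx{Z}^\adj \psdle \norm{\mtx{B}}\,\mtx{Z}|\mtx{B}|\mtx{Z}^\adj$, and then the identity $\mtx{Z}|\mtx{B}|\mtx{Z}^\adj = \pm\mtx{Z}\mtx{B}\mtx{Z}^\adj + 2\mtx{Z}\mtx{B}_\mp\mtx{Z}^\adj = \pm\mtx{X} + \sigma^2\trace(\mtx{B})\Id_p + 2\mtx{Z}\mtx{B}_\mp\mtx{Z}^\adj$ exposes the desired linear-in-$\mtx{X}$ contribution. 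The residual random matrices $\mtx{Z}\mtx{B}_\mp\mtx{Z}^\adj$ and $\fnormsq{\mtx{Z}\mtx{B}} = \trace(\mtx{B}^2\mtx{Z}^\adj\mtx{Z})$ still need to be absorbed into the deterministic leading term, which I would do using the entrywise bound $|Z_{ij}| \le L$; this is where the explicit constants $44$ and $32\sqrt{3}$ are pinned down, producing $v = O((p\sigma^2 + L^2)\fnormsq{\mtx{B}})$ and $c = O(Lp\norm{\mtx{B}})$. Once the self-bounded form is in hand, \cororef{self-bounded} applied to $\pm\mtx{X}$ followed by a union bound delivers both the tail inequality and the expectation bound.
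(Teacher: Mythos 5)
Your opening steps agree with the paper: take the $np$ entries $Z_{ij}$ as the independent coordinates, expand the rank-two-plus-rank-one perturbation $\mtx{X}^{(ij)}-\mtx{X}$, use \eqref{eqn:square-convex} and \eqref{eqn:re-square}, and arrive at an almost-sure bound of the shape
\[
\mtx{V} \ \psdle\ \alpha\,\fnormsq{\mtx{ZB}}\,\Id_p \;+\; \beta\, p\,\mtx{Z}\mtx{B}^2\mtx{Z}^\adj \;+\; \gamma\,\fnormsq{\mtx{B}}\,\Id_p,
\]
which matches \eqref{eqn:V-bound-gen}. The gap is in how you propose to finish. Corollary~\ref{cor:self-bounded} needs the \emph{almost-sure} hypothesis $\mtx{V}\psdle v\Id + c(\pm\mtx{X})$, and your plan to manufacture it by bounding the residual random pieces $\fnormsq{\mtx{ZB}}$ and $\mtx{Z}\mtx{B}_{\mp}\mtx{Z}^\adj$ deterministically via $\abs{Z_{ij}}\le L$ loses a factor of order $n$. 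Concretely, $\fnormsq{\mtx{ZB}}=\trace[\mtx{B}^2\mtx{Z}^\adj\mtx{Z}]$ has expectation $p\sigma^2\fnormsq{\mtx{B}}$ --- which is exactly the scale that appears in the theorem's constant $v$ --- but its worst-case value over the cube $\abs{Z_{ij}}\le L$ is $\Theta\bigl(npL^2\fnormsq{\mtx{B}}\bigr)$: align the rows of $\mtx{Z}$ with the leading eigenvector of $\mtx{B}^2$ and take all entries $=\pm L$. The same phenomenon afflicts $\mtx{Z}\mtx{B}_{\mp}\mtx{Z}^\adj$. So a deterministic absorption would force $v\gtrsim npL^2\fnormsq{\mtx{B}}$, not the stated $44(p\sigma^2+L^2)\fnormsq{\mtx{B}}$, and the Jordan-decomposition trick, while correct, does not rescue this.

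The paper avoids a pointwise self-bounded hypothesis entirely. It keeps the random pieces of $\mtx{V}$ random and works with the mgf: it starts from \eqref{eqn:wish-ES}, separates the scalar factor $p\,\ntr[\mtx{ZB}^2\mtx{Z}^\adj]$ from the matrix factor $p\,\mtx{ZB}^2\mtx{Z}^\adj$ by Cauchy--Schwarz for the expectation and Jensen for the normalized trace, and recombines them into a single $\log\Expect\ntr\econst^{8\psi p\,\mtx{ZB}^2\mtx{Z}^\adj}$. Since $\mtx{B}^2\psdge\zeromtx$, it then invokes a PSD-case mgf bound \eqref{eqn:wish-psd-mgf}, which is itself established by an mgf-level self-bounding argument ($\mtx{A}^2\psdle\norm{\mtx{A}}\mtx{A}$, choose $\psi$ so a copy of $m(\theta)$ reappears, then solve the recursion for $\log m(\theta)$), i.e.\ the same trick as Corollary~\ref{cor:self-bounded}'s proof but run directly on the Theorem~\ref{thm:mxEfronStein} estimate rather than via an almost-sure $\mtx{V}$ bound. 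To make your approach work you would need to move your absorption step to the mgf level in the same spirit; with the deterministic absorption as written, the claimed constants do not follow.
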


It is possible to obtain finer results when $\mtx{B}$ is positive semidefinite.
We have also made a number of loose estimates in order to obtain a clear
statement of the bound.

\subsection{Setup}

Let $\mtx{Z}$ be a $p \times n$ random matrix whose entries
are independent, identically distributed, zero-mean random variables
with variance $\sigma^2$ and bounded in magnitude by $L=1$.
The general case follows by a homogeneity argument.
Define the centered random matrix
$$
\mtx{X}(\mtx{Z}) = \mtx{ZBZ}^\adj - \Expect[ \mtx{ZBZ}^\adj ]
$$
where $\mtx{B} \in \Sym{d}$. By direct calculation, the expectation takes the form
\begin{equation} \label{eqn:expect-zbz}
\Expect[ \mtx{ZBZ}^\adj ] = \sigma^2 (\trace \mtx{B}) \, \Id.
\end{equation}
As in \secref{setup}, we introduce independent copies $\widetilde{Z}_{ij}$
of the entries $Z_{ij}$ of $\mtx{Z}$ and define the random matrices
$$
\mtx{Z}^{(ij)} = \mtx{Z} + (\widetilde{Z}_{ij} - Z_{ij}) \, \mathbf{E}_{ij}
\quad\text{for $i = 1, \dots, p$ and $j = 1, \dots, n$.}
$$
Introduce the variance proxy
\begin{equation} \label{eqn:wish-V}
\mtx{V} 
	:= \frac{1}{2}\sum_{i=1}^p \sum_{j=1}^n  \Expect \left[ \big(\mtx{X}(\mtx{Z}) - \mtx{X}(\mtx{Z}^{(ij)}) \big)^2
	\bcondl \mtx{Z} \right].
\end{equation}
Theorem~\ref{thm:mxEfronStein} allows us to bound $\lambda_{\max}(\mtx{X}(\mtx{Z}))$ in terms of 
the trace mgf of $\mtx{V}$.  Our task is to develop bounds on the trace mgf of $\mtx{V}$ in terms
of the problem data.

\subsection{A Bound for the Variance Proxy}

We begin with a general bound for $\mtx{V}$.  First, we use the definitions
to simplify the expression~\eqref{eqn:wish-V},
and then we invoke the operator convexity~\eqref{eqn:square-convex} of the square function:
$$
\begin{aligned}
\mtx{V} &= \frac{1}{2} \sum\nolimits_{ij} \Expect \left[ \big( 2 (\widetilde{Z}_{ij} - Z_{ij}) \real( \mathbf{E}_{ij} \mtx{BZ}^\adj )
	+ \abssq{\smash{\widetilde{Z}_{ij} - Z_{ij}}} \, \mathbf{E}_{ij} \mtx{B} \mathbf{E}_{ij}^\adj \big)^2 \bcondl \mtx{Z} \right] \\
	&\psdle \frac{1}{2} \sum\nolimits_{ij} \Expect \left[  8 \abssq{\smash{\widetilde{Z}_{ij} - Z_{ij}}} \real( \mathbf{E}_{ij} \mtx{BZ}^\adj)^2
	+ 2 \abs{\smash{\widetilde{Z}_{ij} - Z_{ij}}}^4 \, \abssq{\smash{b_{jj}}} \, \mathbf{E}_{ii} \bcondl \mtx{Z} \right],
\end{aligned}
$$
where $\{b_{ij}\}_{1\le i,j\le n}$ denote the elements of $\mtx{B}$.
Since $Z_{ij}$ and $\widetilde{Z}_{ij}$ are centered variables that are bounded in magnitude by one,
$$
\Expect \big[ \abssq{\smash{\widetilde{Z}_{ij} - Z_{ij}}} \condl \mtx{Z} \big] \leq 2
\quad\text{and}\quad
\Expect \big[ \abs{\smash{\widetilde{Z}_{ij} - Z_{ij}}}^4 \condl \mtx{Z} \big] \leq 8.
$$
Using the bound~\eqref{eqn:re-square} for the square of the real part, we obtain
\begin{align} \label{eqn:V-bound-gen}
\mtx{V}	&\psdle \sum\nolimits_{ij} \big[ 4
	(\mtx{BZ}^\adj \mtx{ZB})_{jj} \, \mathbf{E}_{ii}  + 4 \mtx{ZB} \mathbf{E}_{jj} \mtx{BZ}^\adj
	+ 8 \, \abssq{\smash{b_{jj}}} \, \mathbf{E}_{ii} \big] \notag \\
	&= 4p \ntr[ \mtx{ZB}^2 \mtx{Z}^\adj ] \, \Id + 4p \, \mtx{Z}\mtx{B}^2\mtx{Z}^\adj + 8 \left(\sum\nolimits_j \abssq{\smash{b_{jj}}} \right) \, \Id.
\end{align}
In the first term on the right-hand side of~\eqref{eqn:V-bound-gen}, we have used cyclicity of the standard trace, and then we have rescaled to obtain the normalized trace.

\subsection{A Bound for the Trace Mgf of the Random Matrix}

Next, we apply the matrix exponential Efron--Stein inequality, Theorem~\ref{thm:mxEfronStein},
to bound the logarithm of the trace mgf of the random matrix.
\begin{equation} \label{eqn:wish-ES}
\log \Expect \ntr \econst^{\theta \mtx{X}}
	\leq \frac{\theta^2/\psi}{1 - 2\theta^2/\psi}
	\log \Expect \ntr \econst^{\psi \mtx{V}}.
\end{equation}
Let us focus on the trace mgf of $\mtx{V}$.
The trace exponential is monotone~\eqref{eqn:trace-monotone},
so we can introduce the bound~\eqref{eqn:V-bound-gen} for $\mtx{V}$ and simplify
the expression:
$$
\begin{aligned}
\log \Expect \ntr \econst^{\psi \mtx{V}}
	&\leq \log \Expect \left[ \econst^{4\psi p \ntr[ \mtx{ZB}^2 \mtx{Z}^\adj]}
	\ntr \econst^{4\psi p \, \mtx{ZB}^2\mtx{Z}^\adj} \right]
	+ 8 \psi \left(\sum\nolimits_j \abssq{\smash{b_{jj}}} \right) \\
	&\leq \frac{1}{2} \log \Expect \econst^{8\psi p \, \ntr[ \mtx{ZB}^2 \mtx{Z}^\adj]}
	+ \frac{1}{2} \log \Expect \ntr \econst^{8 \psi p \, \mtx{ZB}^2\mtx{Z}^\adj}
	+ 8 \psi \left(\sum\nolimits_j \abssq{\smash{b_{jj}}} \right) \\
	&\leq \log \Expect\ntr \econst^{8 \psi p \, \mtx{ZB}^2 \mtx{Z}^\adj}
	+ 8 \psi \left(\sum\nolimits_j \abssq{\smash{b_{jj}}} \right).
\end{aligned}
$$
To reach the second line, we use the Cauchy--Schwarz inequality for expectation, and
we use Jensen's inequality to pull the normalized trace through the square.  To arrive
at the last expression, we apply Jensen's inequality to draw out the normalized
trace from the exponential.
Substitute the last display into~\eqref{eqn:wish-ES} and write out
the definition of $\mtx{X}$ to conclude that
\begin{multline} \label{eqn:wish-mgf-gen}
\log \Expect \ntr \econst^{\theta(\mtx{ZBZ}^\adj - \Expect[\mtx{ZBZ}^\adj])} \\
	\leq \frac{\theta^2/\psi}{1 - 2\theta^2/\psi} \left[
	\log \Expect \ntr \econst^{8\psi p \, \mtx{ZB}^2 \mtx{Z}^\adj}
	+ 8\psi \left( \sum\nolimits_j \abssq{\smash{b_{jj}}} \right) \right].
\end{multline}
This mgf bound~\eqref{eqn:wish-mgf-gen} is the central point in the argument.
The rest of the proof consists of elementary (but messy) manipulations.

\subsection{The Positive-Semidefinite Case}

First, we develop an mgf bound for a compound sample covariance matrix based
on a positive-semidefinite matrix $\mtx{A} \psdge \mtx{0}$.
Invoke the bound~\eqref{eqn:wish-mgf-gen} with the choice $\mtx{B} = \mtx{A}$,
and introduce the estimate $\mtx{A}^2 \psdle \norm{\mtx{A}} \mtx{A}$ to reach
\begin{multline*}
\log \Expect \ntr \econst^{\theta(\mtx{ZAZ}^\adj - \Expect[\mtx{ZAZ}^\adj])} \\
	\leq \frac{\theta^2/\psi}{1 - 2\theta^2/\psi} \left[
	\log \Expect \ntr \econst^{8\psi p \norm{\mtx{A}}\, \mtx{ZAZ}^\adj}
	+ 8 \psi (\max\nolimits_j a_{jj} )(\trace \mtx{A}) \right].
\end{multline*}
Select $\psi = \theta / (8p\norm{\mtx{A}})$, which yields
\begin{multline*}
\log \Expect \ntr \econst^{\theta(\mtx{ZAZ}^\adj - \Expect[\mtx{ZAZ}^\adj])} \\
	\leq \frac{1}{1 - 16p \norm{\mtx{A}} \theta} \left[ 8p \norm{\mtx{A}} \theta
	\log \Expect \ntr \econst^{\theta \, \mtx{ZAZ}^\adj}
	+ 8 \theta^2 (\max\nolimits_j a_{jj} )(\trace \mtx{A}) \right].
\end{multline*}
Referring to the calculation~\eqref{eqn:expect-zbz}, we see that
$$
\log \Expect \ntr \econst^{\theta\, \mtx{ZAZ}^\adj}
	= \log \Expect \ntr \econst^{\theta (\mtx{ZAZ}^\adj - \Expect[\mtx{ZAZ}^\adj])}
	+ \sigma^2 (\trace \mtx{A}) \, \theta.
$$
Combine the last two displays, and rearrange to arrive at
\begin{equation} \label{eqn:wish-psd-mgf}
\log \Expect \ntr \econst^{\theta(\mtx{ZAZ}^\adj - \Expect[\mtx{ZAZ}^\adj])}
	\leq \frac{8\theta^2 \trace \mtx{A}}{1 - 24p \norm{\mtx{A}}\theta}
	\left( p\sigma^2 \norm{\mtx{A}}
	+ \max\nolimits_j a_{jj} \right).
\end{equation}
At this point, we can derive probabilistic bounds for
$\lambda_{\max}( \mtx{ZAZ}^\adj - \Expect[ \mtx{ZAZ}^\adj ] )$
by applying Corollary~\ref{cor:self-bounded}.

\subsection{The General Case}

To analyze the case where $\mtx{B} \in \Sym{n}$
is arbitrary, we begin once again with~\eqref{eqn:wish-mgf-gen}.
To control the mgf on the right-hand side,
we need to center the random matrix $\mtx{ZB}^2\mtx{Z}^\adj$.
Applying the calculation~\eqref{eqn:expect-zbz} with $\mtx{B} \mapsto \mtx{B}^2$,
we obtain
$$
\log \Expect \ntr \econst^{8\psi p \, \mtx{ZB}^2 \mtx{Z}^\adj}
	= \log \Expect \ntr \econst^{8\psi p \, (\mtx{ZB}^2 \mtx{Z}^\adj - \Expect[\mtx{ZB}^2\mtx{Z}^\adj])}
	+ 8 p \sigma^2 \fnormsq{\mtx{B}} \psi.
$$
We have used the fact that $\trace \mtx{B}^2 = \fnormsq{\mtx{B}}$.
Since $\mtx{B}^2$ is positive semidefinite, we may introduce the
bound~\eqref{eqn:wish-psd-mgf} with $\mtx{A}= \mtx{B}^2$ and $\theta= 8 \psi p$.
This step yields
$$
\log \Expect \ntr \econst^{8\psi p \, \mtx{ZB}^2 \mtx{Z}^\adj}
	\leq \frac{512 p^2 \fnormsq{\mtx{B}} \normsq{\mtx{B}} ( p \sigma^2  + 1 ) \psi^2 }{1 - 192 p^2 \normsq{\mtx{B}}\psi}
	+ 8 p \sigma^2 \fnormsq{\mtx{B}} \psi.
$$
This argument relies on the estimate $\max_{j} (\mtx{B}^2)_{jj} \leq \normsq{\mtx{B}}$.

Introduce the latter display into~\eqref{eqn:wish-mgf-gen}.
Select $\psi = (384 p^2 \normsq{\mtx{B}})^{-1}$,
and invoke the inequality $\sum_{j} \abssq{b_{jj}} \leq \fnormsq{\mtx{B}}$.
A numerical simplification delivers
\begin{align*}
\log \Expect \ntr \econst^{\theta(\mtx{ZBZ}^\adj - \Expect[\mtx{ZBZ}^\adj])}
	&\leq \frac{11\fnormsq{\mtx{B}}( p \sigma^2  + 1 )\theta^2 }{1-768p^2\normsq{\mtx{B}} \theta^2}\\
	&= \frac{11\fnormsq{\mtx{B}}( p \sigma^2  + 1 )\theta^2 }{(1-\sqrt{768}p\norm{\mtx{B}} \theta)(1+\sqrt{768}p\norm{\mtx{B}} \theta)} \\
	&\leq \frac{11\fnormsq{\mtx{B}}( p \sigma^2  + 1 )\theta^2 }{1-\sqrt{768}p\norm{\mtx{B}} \theta}.
\end{align*}
Tail and expectation bounds for the maximal eigenvalue follow from Proposition~\ref{prop:gaussexp}
with $v=22\fnormsq{\mtx{B}}(p\sigma^2+1)$ and $c=16\sqrt{3}p\norm{\mtx{B}}$).

The bounds for the minimum eigenvalue follow from the same argument.  In this case,
we must consider negative values of the parameter $\theta$,
but we can transfer the sign to the matrix $\mtx{B}$ and proceed as before.
Together, the bounds on the maximum and minimum eigenvalue lead to
estimates for the spectral norm.

\section{Random Matrices, Exchangeable Pairs, and Kernels} \label{sec:exchange}

Now, we embark on our quest to prove the matrix Efron--Stein inequalities of \secref{main}.
This section outlines some basic concepts from
the theory of exchangeable pairs; cf.~\citep{Stein72,Stein86,Cha07:Steins-Method,Cha08:Concentration-Inequalities}.
Afterward, we explain how these ideas lead to concentration inequalities.

\subsection{Exchangeable Pairs}

In our analysis, the primal concept is an exchangeable pair of random variables.  

\begin{defn}[Exchangeable Pair] \label{def:exchange}
Let $Z$ and $Z'$ be random variables taking values in a Polish space $\metricspace$.
We say that $(Z, Z')$ is an \term{exchangeable pair} when it has the
same distribution as the pair $(Z', Z)$.
\end{defn}

\noindent
In particular, $Z$ and $Z'$ have the same distribution,
and $\Expect f(Z, Z') = \Expect f(Z',Z)$
for every integrable function $f$.

\subsection{Kernel Stein Pairs} \label{sec:pairs}

We are interested in a special class of exchangeable pairs of random matrices.
There must be an antisymmetric bivariate kernel that ``reproduces'' the matrices
in the pair.  This approach is motivated by~\cite{Cha07:Steins-Method}.

\begin{defn}[Kernel Stein Pair] \label{def:K-stein-pair}
Let $(Z, Z')$ be an exchangeable pair of random variables taking values in a Polish space $\metricspace$,
and let $\mtx{\Psi} : \metricspace \to \Sym{d}$ be a measurable function. 
Define the random Hermitian matrices
$$
\mtx{X} \defby \mtx{\Psi}(Z)
\quad\text{and}\quad
\mtx{X}' \defby \mtx{\Psi}(Z').
$$
We say that $(\mtx{X}, \mtx{X}')$ is a \term{kernel Stein pair} if there exists a
bivariate function $\mtx{K} : \metricspace^2 \to \Sym{d}$ for which
\begin{equation} \label{eqn:K-antisymmetry}
\mtx{K}(z,z') = -\mtx{K}(z',z) \quad\text{for all $z, z' \in \metricspace$}
\end{equation}
and
\begin{equation} \label{eqn:K-stein-pair}
\Expect[ \mtx{K}(Z,Z') \condl Z ] = \mtx{X}
	\quad\text{almost surely.}
\end{equation}
When discussing a kernel Stein pair $(\mtx{X}, \mtx{X}')$,
we assume that $\Expect \normsq{\mtx{X}} < \infty$.
We sometimes write \term{$\mtx{K}$-Stein pair} to emphasize the specific kernel $\mtx{K}$.
\end{defn}

The kernel is always centered in the sense that
\begin{equation} \label{eqn:kernel-zero-mean}
\Expect[ \mtx{K}(Z, Z') ] = \mtx{0}.
\end{equation}
Indeed, $\Expect[ \mtx{K}(Z, Z') ] = - \Expect[ \mtx{K}(Z', Z) ] = - \Expect[ \mtx{K}(Z, Z') ]$,
where the first identity follows from antisymmetry and the second follows from exchangeability.

\begin{rem}[Matrix Stein Pairs]
The analysis in~\citep{MackeyJoChFaTr12} is based on a subclass of kernel Stein pairs called \term{matrix Stein pairs}.
A matrix Stein pair $(\mtx{X}, \mtx{X}')$ derived from an auxiliary exchangeable pair $(Z,Z')$ satisfies the stronger condition
\begin{equation} \label{eqn:matrix-stein-pair}
\Expect[ \mtx{X} - \mtx{X}' \bcondl Z ] = \alpha \mtx{X}
\quad\text{for some $\alpha > 0$.}
\end{equation}
That is, a matrix Stein pair is a kernel Stein pair with $\mtx{K}(Z,Z') = \alpha^{-1} (\mtx{X} - \mtx{X}' )$.
Although~\cite{MackeyJoChFaTr12} describe several classes of matrix Stein pairs, most exchangeable pairs of random matrices do not satisfy~\eqref{eqn:matrix-stein-pair}.  Kernel Stein pairs are more common, so they are commensurately more useful.
\end{rem}

\subsection{The Method of Exchangeable Pairs} \label{sec:method-exchange}

Kernel Stein pairs are valuable because they offer a powerful tool for
evaluating moments of a random matrix.  We express this claim in
a fundamental technical lemma, 
which generalizes both~\cite[Eqn.~(6)]{Cha07:Steins-Method}
and~\cite[Lem.~2.3]{MackeyJoChFaTr12}.

\begin{lemma}[Method of Exchangeable Pairs] \label{lem:exchange}
Suppose that $(\mtx{X}, \mtx{X}') \in \Sym{d} \times \Sym{d}$ is a $\mtx{K}$-Stein pair constructed from an auxiliary exchangeable pair $(Z,Z') \in \metricspace^2$.
Let $\mtx{F}: \Sym{d} \rightarrow \Sym{d}$ be a measurable function that satisfies the regularity condition
\begin{equation} \label{eqn:regularity-mep}
\Expect \norm{ \mtx{K}(Z,Z') \, \mtx{F}(\mtx{X})  } < \infty.
\end{equation}
Then
\begin{equation}  \label{eqn:exchange}
 \Expect \left[ \mtx{X} \, \mtx{F}(\mtx{X}) \right]
 	= \frac{1}{2} \Expect \left[ \mtx{K}(Z,Z')(\mtx{F}(\mtx{X}) - \mtx{F}(\mtx{X}') ) \right].
\end{equation}
\end{lemma}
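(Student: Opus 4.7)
The plan is to prove the identity by a short two-step argument: first rewrite the left-hand side using the defining property of the $\mtx{K}$-Stein pair to introduce the kernel, then symmetrize using exchangeability together with antisymmetry of $\mtx{K}$.

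\textbf{Step 1: Introduce the kernel.} Since $\mtx{X} = \mtx{\Psi}(Z)$, the matrix $\mtx{F}(\mtx{X})$ is $\sigma(Z)$-measurable. Using this together with~\eqref{eqn:K-stein-pair} and the tower property of conditional expectation, I would argue
\[
\Expect[\mtx{X} \, \mtx{F}(\mtx{X})]
  = \Expect\bigl[\Expect[\mtx{K}(Z,Z') \mid Z] \, \mtx{F}(\mtx{X})\bigr]
  = \Expect\bigl[\Expect[\mtx{K}(Z,Z') \, \mtx{F}(\mtx{X}) \mid Z]\bigr]
  = \Expect[\mtx{K}(Z,Z') \, \mtx{F}(\mtx{X})].
\]
Note the placement of $\mtx{F}(\mtx{X})$ on the right of $\mtx{K}(Z,Z')$ must be preserved since matrix multiplication is noncommutative; the regularity hypothesis~\eqref{eqn:regularity-mep} is what justifies Fubini/tower manipulations here.

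\textbf{Step 2: Symmetrize.} Because $(Z, Z')$ is exchangeable, the pair $(Z, Z')$ has the same distribution as $(Z', Z)$, so for any integrable matrix-valued function $g$,
\[
\Expect[g(Z, Z')] = \Expect[g(Z', Z)].
\]
Applying this to $g(z, z') = \mtx{K}(z, z') \, \mtx{F}(\mtx{\Psi}(z'))$ and then invoking the antisymmetry~\eqref{eqn:K-antisymmetry},
\[
\Expect[\mtx{K}(Z, Z') \, \mtx{F}(\mtx{X}')]
  = \Expect[\mtx{K}(Z', Z) \, \mtx{F}(\mtx{X})]
  = -\Expect[\mtx{K}(Z, Z') \, \mtx{F}(\mtx{X})].
\]

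\textbf{Step 3: Combine.} Adding $\Expect[\mtx{K}(Z,Z') \, \mtx{F}(\mtx{X})]$ to both sides of the displayed identity in Step~2 and applying Step~1 yields
\[
2\, \Expect[\mtx{X} \, \mtx{F}(\mtx{X})]
  = \Expect[\mtx{K}(Z,Z') \, \mtx{F}(\mtx{X})] - \Expect[\mtx{K}(Z,Z') \, \mtx{F}(\mtx{X}')]
  = \Expect[\mtx{K}(Z,Z')(\mtx{F}(\mtx{X}) - \mtx{F}(\mtx{X}'))],
\]
which is~\eqref{eqn:exchange} after dividing by two.

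There is no real obstacle in this proof; it is a direct generalization of the scalar exchangeable-pairs identity of Chatterjee. The only subtlety worth flagging is that $\mtx{F}(\mtx{X})$ must remain on the right of $\mtx{K}(Z, Z')$ throughout, since the conclusion~\eqref{eqn:exchange} fixes this ordering. The regularity condition~\eqref{eqn:regularity-mep} together with exchangeability guarantees $\Expect \norm{\mtx{K}(Z,Z') \, \mtx{F}(\mtx{X}')} < \infty$ as well (since $(Z,Z')\eqdist(Z',Z)$), so every expectation appearing in the argument is absolutely convergent and all manipulations are legal.
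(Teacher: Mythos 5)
Your proof is correct and follows essentially the same route as the paper's: introduce the kernel via the defining property~\eqref{eqn:K-stein-pair} and the tower/pull-through property, then symmetrize using exchangeability and the antisymmetry~\eqref{eqn:K-antisymmetry}, and combine. Your Step~2 runs the exchangeability-plus-antisymmetry chain in the opposite direction from the paper but the content is identical, and your extra remarks on preserving the left-multiplication ordering and on the integrability of the $\mtx{F}(\mtx{X}')$ term are accurate and harmless additions.
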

\begin{proof}
\defref{K-stein-pair}, of a kernel Stein pair, implies that
$$
\Expect[ \mtx{X} \, \mtx{F}(\mtx{X}) ]
	= \Expect \big[ \Expect[ \mtx{K}(Z,Z') \condl Z ] \, \mtx{F}(\mtx{X}) \big]
	= \Expect[ \mtx{K}(Z,Z') \, \mtx{F}(\mtx{X}) ],
$$
where we justify the pull-through property of conditional expectation
using the regularity condition~\eqref{eqn:regularity-mep}.
The antisymmetry~\eqref{eqn:K-antisymmetry} of the kernel $\mtx{K}$
delivers the relation
$$
\Expect[ \mtx{K}(Z,Z')\, \mtx{F}(\mtx{X}) ]
	= \Expect[\mtx{K}(Z',Z) \, \mtx{F}(\mtx{X}') ]
	= - \Expect[ \mtx{K}(Z,Z') \, \mtx{F}(\mtx{X}') ].
$$
Average the two preceding displays to reach the identity~\eqref{eqn:exchange}.
\end{proof}

Lemma~\ref{lem:exchange} has several immediate consequences for the structure
of a $\mtx{K}$-Stein pair $(\mtx{X}, \mtx{X}')$ constructed from an auxiliary
exchangeable pair $(Z, Z')$.  First, the matrix $\mtx{X}$ must be centered:
\begin{equation} \label{eqn:X-zero-mean}
\Expect \mtx{X}  = \mtx{0}.
\end{equation}
This result follows from the choice $\mtx{F}(\mtx{X}) = \Id$.

Second, we can develop a bound for the variance of the random matrix $\mtx{X}$.
Since $\mtx{X}$ is centered,
\begin{equation*} \label{eqn:X-variance}
\Var[ \mtx{X} ]
	= \Expect \big[ \mtx{X}^2 \big]
	= \frac{1}{2} \Expect\big[ \real\big(\mtx{K}(Z, Z') (\mtx{X} - \mtx{X}')\big) \big].
\end{equation*}
This claim follows when we apply Lemma~\ref{lem:exchange} with $\mtx{F}(\mtx{X}) = \mtx{X}$
and extract the real part~\eqref{eqn:reim} of the result.
Invoke the matrix inequality~\eqref{eqn:matrix-am-gm} to obtain
\begin{equation} \label{eqn:var-kernel-bd}
\Var[ \mtx{X} ] \psdle
	\frac{1}{4} \Expect\big[ \mtx{K}(Z, Z')^2 + (\mtx{X} - \mtx{X}')^2\big]
\end{equation}
In other words, we can obtain bounds for the variance in terms of the
variance of the kernel $\mtx{K}$ and the variance of $\mtx{X} - \mtx{X}'$.

\subsection{Conditional Variances}

To each kernel Stein pair $(\mtx{X}, \mtx{X}')$, we may associate two random matrices called the \term{conditional variance}  and \term{kernel conditional variance} of $\mtx{X}$.  We will see that $\mtx{X}$ is concentrated around the zero matrix whenever the conditional variance and the kernel conditional variance are both small.

\begin{defn}[Conditional Variances] \label{def:conditional-variance}
Suppose that $(\mtx{X}, \mtx{X}')$ is a $\mtx{K}$-Stein pair, 
constructed from an auxiliary exchangeable pair $(Z, Z')$.
The \term{conditional variance} is the random matrix
\begin{align} \label{eqn:conditional-variance}
\mtx{V}_{\mtx{X}}
	\defby  \frac{1}{2} \Expect \big[(\mtx{X} - \mtx{X}')^2\condl Z \big],
\end{align}
and the \term{kernel conditional variance} is the random matrix
\begin{align} \label{eqn:K-conditional-variance}
\mtx{V}^{\mtx{K}}
	\defby  \frac{1}{2} \Expect \big[\mtx{K}(Z,Z')^2\condl Z \big].
\end{align}
\end{defn}

Because of the bound~\eqref{eqn:var-kernel-bd}, the conditional variances satisfy
$$
\Var[ \mtx{X} ] \psdle \frac{1}{2} \Expect\big[ \mtx{V}_{\mtx{X}} + \mtx{V}^{\mtx{K}} \big],
$$
so it is natural to seek concentration results stated in terms of these quantities.

\section{Polynomial Moments of a Random Matrix}
\label{sec:poly-moment}

We begin by developing a polynomial moment bound for a kernel Stein pair.
This result shows that we can control the expectation of the Schatten $p$-norm
in terms of the conditional variance and the kernel conditional variance.

\begin{thm}[Polynomial Moments for a Kernel Stein Pair] 
\label{thm:bdg-inequality}
Let $(\mtx{X}, \mtx{X}')$ be a $\mtx{K}$-Stein pair
based on an auxiliary exchangeable pair $(Z,Z')$.
For a natural number $p \geq 1$,
assume the regularity conditions
$$
\Expect \pnorm{2p}{\mtx{X}}^{2p} < \infty
\quad\text{and}\quad
\Expect \norm{\mtx{K}(Z,Z')}^{2p} < \infty.
$$
Then, for each $s > 0$,
$$
\big( \Expect \pnorm{2p}{\mtx{X}}^{2p} \big)^{1/(2p)} 
	\leq \sqrt{2p-1} \left( \Expect \pnorm{p}{\frac{1}{2} \big(s \, \mtx{V}_{\mtx{X}} + s^{-1} \, \mtx{V}^{\mtx{K}} \big) }^p \right)^{1/(2p)}.
$$
The symbol $\pnorm{p}{\cdot}$ refers to the Schatten $p$-norm \eqref{eqn:schatten-norm},
and the conditional variances $\mtx{V}_{\mtx{X}}$ and $\mtx{V}^{\mtx{K}}$
are defined in~\eqref{eqn:conditional-variance} and~\eqref{eqn:K-conditional-variance}.
\end{thm}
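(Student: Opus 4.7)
The strategy follows the scalar polynomial moment argument of Chatterjee~\citep{Cha07:Steins-Method} but in the noncommutative setting, and proceeds in four stages.

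Stage one applies the method of exchangeable pairs, Lemma~\ref{lem:exchange}, with the matrix function $\mtx{F}(\mtx{X})=\mtx{X}^{2p-1}$. Since $\trace[\mtx{X}\cdot\mtx{X}^{2p-1}]=\trace\mtx{X}^{2p}=\pnorm{2p}{\mtx{X}}^{2p}$ for Hermitian $\mtx{X}$, and the regularity hypothesis~\eqref{eqn:regularity-mep} follows by matrix H\"older together with the assumed bounds on $\Expect\pnorm{2p}{\mtx{X}}^{2p}$ and $\Expect\norm{\mtx{K}(Z,Z')}^{2p}$, Lemma~\ref{lem:exchange} gives the starting identity
$$\Expect\pnorm{2p}{\mtx{X}}^{2p}\;=\;\tfrac{1}{2}\Expect\trace\bigl[\mtx{K}(Z,Z')\bigl(\mtx{X}^{2p-1}-(\mtx{X}')^{2p-1}\bigr)\bigr].$$

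Stage two reduces this to a bilinear form in $\mtx{K}$ and $\mtx{X}-\mtx{X}'$ sandwiched against the PSD weight $\mtx{X}^{2p-2}$. Starting from the telescoping expansion $\mtx{X}^{2p-1}-(\mtx{X}')^{2p-1}=\sum_{j=0}^{2p-2}\mtx{X}^{j}(\mtx{X}-\mtx{X}')(\mtx{X}')^{2p-2-j}$, I would exploit cyclicity of the trace and the exchangeability symmetry $(\mtx{X},\mtx{X}',\mtx{K})\eqdist(\mtx{X}',\mtx{X},-\mtx{K})$ to symmetrize the $2p-1$ noncommuting summands. Combining this with the matrix AM--GM inequality~\eqref{eqn:matrix-am-gm} applied to the Hermitian pair $(s^{1/2}\mtx{K},\,s^{-1/2}(\mtx{X}-\mtx{X}'))$, and invoking trace monotonicity against $\mtx{X}^{2p-2}\psdge\zeromtx$, the target intermediate bound for each $s>0$ is
$$\Expect\trace\bigl[\mtx{K}\bigl(\mtx{X}^{2p-1}-(\mtx{X}')^{2p-1}\bigr)\bigr]\;\leq\;\tfrac{2p-1}{2}\,\Expect\trace\Bigl[\bigl(s(\mtx{X}-\mtx{X}')^{2}+s^{-1}\mtx{K}^{2}\bigr)\mtx{X}^{2p-2}\Bigr].$$

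Stage three takes conditional expectations given $Z$. Because $\mtx{X}^{2p-2}$ is $Z$-measurable I can pull the conditioning through the trace, and the definitions of $\mtx{V}_{\mtx{X}}$ and $\mtx{V}^{\mtx{K}}$ then convert the right-hand side into
$$\Expect\pnorm{2p}{\mtx{X}}^{2p}\;\leq\;(2p-1)\,\Expect\trace\bigl[\Psi\,\mtx{X}^{2p-2}\bigr],\qquad \Psi\defby\tfrac{1}{2}\bigl(s\mtx{V}_{\mtx{X}}+s^{-1}\mtx{V}^{\mtx{K}}\bigr).$$
Stage four is matrix H\"older with conjugate exponents $p$ and $p/(p-1)$, which gives $\trace[\Psi\mtx{X}^{2p-2}]\leq\pnorm{p}{\Psi}\,\pnorm{2p}{\mtx{X}}^{2p-2}$ (using $\pnorm{p/(p-1)}{\mtx{X}^{2p-2}}=\pnorm{2p}{\mtx{X}}^{2p-2}$). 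A second application of H\"older over the probability space yields $\Expect\pnorm{2p}{\mtx{X}}^{2p}\leq(2p-1)(\Expect\pnorm{p}{\Psi}^{p})^{1/p}(\Expect\pnorm{2p}{\mtx{X}}^{2p})^{(p-1)/p}$, and rearranging followed by the $1/(2p)$th root delivers the claim.

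The principal obstacle is the noncommutative mean value trace inequality of stage two. In the scalar case the $2p-1$ terms in the telescoping sum factor as $(x-x')\sum_{j}x^{j}(x')^{2p-2-j}$ and Young's inequality bounds $\sum_{j}x^{j}(x')^{2p-2-j}\leq\tfrac{2p-1}{2}(|x|^{2p-2}+|x'|^{2p-2})$ before AM--GM on $|k||x-x'|$ closes the argument. When $\mtx{X},\mtx{X}',\mtx{K}$ do not commute, this factorization breaks, and one must handle the $2p-1$ noncommuting trace terms simultaneously, invoking the antisymmetry of $\mtx{K}$ under $Z\leftrightarrow Z'$, operator convexity of the square~\eqref{eqn:square-convex}, and the matrix AM--GM~\eqref{eqn:matrix-am-gm}, in order to pair $\mtx{K}^{2}$ with $(\mtx{X}-\mtx{X}')^{2}$ while isolating the weight $\mtx{X}^{2p-2}$ under the trace. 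Establishing this noncommutative trace inequality is the technical heart of the proof.
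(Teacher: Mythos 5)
Your overall architecture is exactly the paper's: you invoke Lemma~\ref{lem:exchange} with $\mtx{F}(\mtx{X}) = \mtx{X}^{2p-1}$, you identify the need for a ``noncommutative mean value trace inequality'' bounding $\trace[\mtx{C}(\mtx{A}^q-\mtx{B}^q)]$ by $\frac{q}{4}\trace[(s(\mtx{A}-\mtx{B})^2+s^{-1}\mtx{C}^2)(\abs{\mtx{A}}^{q-1}+\abs{\mtx{B}}^{q-1})]$ (this is the paper's Lemma~\ref{lem:pmvti}), you use exchangeability to collapse the two weights $\mtx{X}^{2p-2}+(\mtx{X}')^{2p-2}$ into $2\mtx{X}^{2p-2}$, and you finish with trace H\"older, expectation H\"older, and an algebraic bootstrap. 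The intermediate bound you write at the end of stage two is precisely what the paper obtains. So far so good.

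The gap is that you do not prove the mean value trace inequality, and — more to the point — the hints you offer for it would not succeed. You propose to control the telescoping sum $\sum_{k=0}^{2p-2} \trace[\mtx{K}\,\mtx{X}^k(\mtx{X}-\mtx{X}')(\mtx{X}')^{2p-2-k}]$ by applying the matrix AM--GM inequality~\eqref{eqn:matrix-am-gm} to the pair $(s^{1/2}\mtx{K},\,s^{-1/2}(\mtx{X}-\mtx{X}'))$ together with operator convexity of the square~\eqref{eqn:square-convex}. But in each telescoping summand $\mtx{K}$ and $\mtx{X}-\mtx{X}'$ are \emph{not adjacent}: the factor $\mtx{X}^k$ sits between them, and cyclicity alone cannot make them adjacent since $\mtx{X}^k$, $(\mtx{X}')^{2p-2-k}$, $\mtx{K}$ and $\mtx{X}-\mtx{X}'$ do not commute. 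Thus~\eqref{eqn:matrix-am-gm}, which gives $\real(\mtx{AB}) \psdle \frac12(\mtx{A}^2+\mtx{B}^2)$ for an adjacent product, never gets a foothold. What is actually needed is the operator-theoretic machinery the paper builds in Appendix~\ref{sec:operator-ineq}--\ref{sec:proof-pmvti}: one views $\trace[\mtx{C}\,\mtx{A}^k(\mtx{A}-\mtx{B})\mtx{B}^{q-1-k}]$ as the Frobenius inner product $\smip{\mtx{C}}{(\mathcal{A}_k\mathcal{B}_{q-1-k})(\mtx{A}-\mtx{B})}$, where $\mathcal{A}_k$ is left-multiplication by $\mtx{A}^k$ and $\mathcal{B}_{q-1-k}$ is right-multiplication by $\mtx{B}^{q-1-k}$, notes that these two self-adjoint operators on the Hilbert space $\M^d$ \emph{commute}, and then applies an operator Cauchy--Schwarz inequality (Lemma~\ref{lem:operator-cs}) followed by Young's inequality for commuting operators (Lemma~\ref{lem:young-commute}) to bound $\sum_k \abs{\mathcal{A}_k\mathcal{B}_{q-1-k}} \psdle \frac{q}{2}(\abs{\mathcal{A}_1}^{q-1}+\abs{\mathcal{B}_1}^{q-1})$. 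None of these tools appears in your sketch, and matrix AM--GM and operator convexity of the square are not substitutes for them. You correctly flagged this as ``the technical heart of the proof,'' and that assessment is right: it is the part that remains open.

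One smaller point: you should take care that Lemma~\ref{lem:pmvti} provides a bound with weight $\abs{\mtx{A}}^{q-1}+\abs{\mtx{B}}^{q-1}$, which only collapses to $2\mtx{X}^{2p-2}$ after taking expectations and invoking the exchangeability of $(\mtx{X},\mtx{X}')$; the deterministic trace inequality itself must carry both weights, and $q=2p-1$ is odd so $\abs{\mtx{X}}^{q-1}=\mtx{X}^{2p-2}$ only because the exponent is even. Your stage two statement silently absorbs this step; it is harmless in the end but should be made explicit.
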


\noindent
We establish this result, which holds equally for infinite dimensional operators $\mtx{X}$, in the remainder of this section.
The pattern of argument is similar to the proofs
of~\cite[Thm.~3.14]{Cha08:Concentration-Inequalities}
and~\cite[Thm.~7.1]{MackeyJoChFaTr12}, but we require
a nontrivial new matrix inequality.

\subsection{The Polynomial Mean Value Trace Inequality}

The main new ingredient in the proof of Theorem~\ref{thm:bdg-inequality}
is the following matrix trace inequality.

\begin{lemma}[Polynomial Mean Value Trace Inequality] \label{lem:pmvti} 
For all matrices $\mtx{A}$, $\mtx{B}$, $\mtx{C} \in\Sym{d}$, all integers $q \geq 1$, and all $s >0$, it holds that
\begin{align*}
\abs{\trace \left[\mtx{C} (\mtx{A}^{q} - \mtx{B}^{q} )\right]} \leq  
	\frac{q}{4} \trace \big[(s \, (\mtx{A}-\mtx{B})^2+s^{-1} \, \mtx{C}^2)(\abs{\mtx{A}}^{q-1} + \abs{\mtx{B}}^{q-1}) \big].
\end{align*}
\end{lemma}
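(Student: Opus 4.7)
The plan is to reduce the matrix inequality to a pointwise scalar AM--GM estimate by diagonalizing $\mtx{A}$ and $\mtx{B}$. Write spectral decompositions $\mtx{A} = \sum_i \alpha_i\,\vct{u}_i\vct{u}_i^\adj$ and $\mtx{B} = \sum_j \beta_j\,\vct{v}_j\vct{v}_j^\adj$, and set $\mtx{D} \defby \mtx{A}-\mtx{B}$. Combining the telescoping identity $\mtx{A}^q - \mtx{B}^q = \sum_{k=0}^{q-1}\mtx{A}^{q-1-k}\mtx{D}\,\mtx{B}^k$ with the eigenrelation $\ip{\vct{u}_i}{\mtx{D}\vct{v}_j} = (\alpha_i - \beta_j)\ip{\vct{u}_i}{\vct{v}_j}$ expresses the left-hand side as
\[
\trace\bigl[\mtx{C}(\mtx{A}^q - \mtx{B}^q)\bigr] \;=\; \sum_{i,j} T_{ij}\, a_{ij}\, \overline{c_{ij}},
\qquad T_{ij} \defby \sum_{k=0}^{q-1}\alpha_i^{q-1-k}\beta_j^k,
\]
with $a_{ij} \defby \ip{\vct{u}_i}{\mtx{D}\vct{v}_j}$ and $c_{ij} \defby \ip{\vct{u}_i}{\mtx{C}\vct{v}_j}$. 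In parallel, expanding $\trace[\mtx{D}^2|\mtx{A}|^{q-1}] = \sum_i |\alpha_i|^{q-1}\|\mtx{D}\vct{u}_i\|^2$ in the ONB $\{\vct{v}_j\}$ (and treating the three companion terms identically, using Hermiticity of $\mtx{C}$ and $\mtx{D}$) rewrites the right-hand side as $\tfrac{q}{4}\sum_{i,j}(|\alpha_i|^{q-1}+|\beta_j|^{q-1})(s|a_{ij}|^2 + s^{-1}|c_{ij}|^2)$.

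Applying the scalar AM--GM bound $|T_{ij}\, a_{ij}\,\overline{c_{ij}}| \leq \tfrac{1}{2}|T_{ij}|\bigl(s|a_{ij}|^2 + s^{-1}|c_{ij}|^2\bigr)$ term by term reduces the problem to the scalar estimate
\[
|T_{ij}| \;\leq\; \tfrac{q}{2}\bigl(|\alpha_i|^{q-1} + |\beta_j|^{q-1}\bigr).
\]
The triangle inequality reduces this further to bounding $\sum_{k=0}^{q-1}|\alpha|^{q-1-k}|\beta|^{k}$; for $q \geq 2$, the weighted AM--GM inequality $|\alpha|^{q-1-k}|\beta|^{k} \leq \tfrac{q-1-k}{q-1}|\alpha|^{q-1} + \tfrac{k}{q-1}|\beta|^{q-1}$ followed by evaluating the arithmetic series $\sum_{k=0}^{q-1} k = q(q-1)/2$ yields exactly the factor $q/2$; the $q=1$ case is immediate.

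The main subtlety to anticipate is that $\mtx{A}$ and $\mtx{B}$ may fail to commute and may be indefinite, so the intermediate monomials $\alpha_i^{q-1-k}\beta_j^k$ carry arbitrary signs. Working in the two separate eigenbases circumvents non-commutativity, and absorbing the signs into $|T_{ij}|$ via the triangle inequality before invoking AM--GM is precisely what allows the final bound to involve only the ``clean'' quantities $|\mtx{A}|^{q-1}$ and $|\mtx{B}|^{q-1}$, rather than intermediate mixed powers that would be unusable in the matrix moment estimates downstream.
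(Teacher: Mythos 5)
Your proof is correct. Every step checks out: the telescoping identity, the mixed-basis expansion $\trace[\mtx{C}(\mtx{A}^q-\mtx{B}^q)] = \sum_{i,j} T_{ij} a_{ij}\overline{c_{ij}}$ with $a_{ij}=\ip{\vct{u}_i}{(\mtx{A}-\mtx{B})\vct{v}_j}$ and $c_{ij}=\ip{\vct{u}_i}{\mtx{C}\vct{v}_j}$, the matching expansion of the right-hand side into $\tfrac{q}{4}\sum_{i,j}(|\alpha_i|^{q-1}+|\beta_j|^{q-1})(s|a_{ij}|^2+s^{-1}|c_{ij}|^2)$, the term-by-term scalar AM--GM, and the weighted AM--GM giving $|T_{ij}|\le \tfrac{q}{2}(|\alpha_i|^{q-1}+|\beta_j|^{q-1})$.

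Your route is genuinely different in presentation from the paper's, though it rests on the same telescoping decomposition and the same underlying weighted AM--GM on monomials. The paper never leaves the operator picture: it introduces left- and right-multiplication operators $\mathcal{A}_k, \mathcal{B}_k$ on $\M^d$, applies an operator Young's inequality (Lemma~\ref{lem:young-commute}) to the commuting pair $\abs{\mathcal{A}_k}, \abs{\mathcal{B}_{q-k-1}}$, and then uses an operator Cauchy--Schwarz inequality (Lemma~\ref{lem:operator-cs}) before a final numerical AM--GM. You instead diagonalize $\mtx{A}$ and $\mtx{B}$ in their own eigenbases and reduce everything to scalar estimates on the mixed-basis coefficients. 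This collapses the paper's two auxiliary operator lemmas into elementary scalar facts, and it replaces the Cauchy--Schwarz-then-AM--GM two-step with a single term-by-term AM--GM---which, as you can check, yields exactly the same final bound. What your version loses is the basis-free formulation: the paper's operator proof extends verbatim to infinite-dimensional settings (which the authors explicitly use for Theorem~\ref{thm:bdg-inequality}), whereas your spectral-decomposition argument would need adaptation for operators without pure point spectrum. For the finite-dimensional statement as given, your argument is a clean, more elementary alternative.
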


\noindent
Lemma~\ref{lem:pmvti} improves on the estimate~\cite[Lem.~3.4]{MackeyJoChFaTr12},
which drives concentration inequalities for matrix Stein pairs.
Since the result does not have any probabilistic content,
we defer the proof until Appendix~\ref{sec:proof-pmvti}.

\subsection{Proof of Theorem~\ref{thm:bdg-inequality}}
\label{sec:bdg-proof}

The argument follows the same lines as the proof of~\cite[Thm.~7.1]{MackeyJoChFaTr12},
so we pass lightly over certain details.
Let us examine the quantity of interest:
\begin{align*}
E \defby \Expect \pnorm{2p}{\mtx{X}}^{2p}
	= \Expect \trace \abs{\mtx{X}}^{2p}
	=  \Expect \trace \big[ \mtx{X}\cdot{\mtx{X}}^{2p-1}\big]
\end{align*}
where $\cdot$ denotes the usual matrix product.
To apply the method of exchangeable pairs, Lemma~\ref{lem:exchange},
we first check the regularity condition~\eqref{eqn:regularity-mep}:
\begin{align*} 
\Expect \smnorm{}{ \mtx{K}(Z,Z') \cdot {\mtx{X}}^{2p-1} }
	&\leq \Expect \big( \norm{ \mtx{K}(Z,Z')}  \norm{ \mtx{X} }^{2p-1} \big) \\
	&\leq \big(\Expect \norm{ \mtx{K}(Z,Z')}^{2p} \big)^{1/(2p)} \big(\Expect \norm{ \mtx{X} }^{2p} \big)^{(2p-1)/(2p)} 
	< \infty,
\end{align*}
where we have applied \Holder's inequality for expectation
and the fact that the spectral norm is dominated by the Schatten $2p$-norm.
Thus, we may invoke Lemma~\ref{lem:exchange} with $\mtx{F}(\mtx{X}) = {\mtx{X}}^{2p - 1}$ to reach
\begin{align*}
E = \frac{1}{2} \Expect \trace\big[ \mtx{K}(Z,Z') \cdot
	\big( {\mtx{X}}^{2p-1} - ({\mtx{X}'})^{2p-1} \big) \big].
\end{align*}
Next, fix a parameter $s > 0$.  Apply the polynomial mean value trace inequality, Lemma~\ref{lem:pmvti}, with $q=2p-1$ to obtain the estimate
\begin{align*}
E 
	&\leq \frac{2p-1}{8} \Expect \trace\big[\big(s \, (\mtx{X}-\mtx{X}')^2+s^{-1} \mtx{K}(Z,Z')^2 \big)
		\cdot \big(\mtx{X}^{2p-2} + (\mtx{X}')^{2p-2} \big) \big] \\
	&= \frac{2p-1}{4} \Expect \trace\big[ \big(s \, (\mtx{X}-\mtx{X}')^2+ s^{-1} \mtx{K}(Z,Z')^2 \big)
		\cdot\mtx{X}^{2p-2} \big] \\
	&= (2p-1) \Expect \trace\left[\frac{1}{2}\big(s \, \mtx{V}_{\mtx{X}}+s^{-1} \mtx{V}^{\mtx{K}}\big)\cdot \mtx{X}^{2p-2}\right].
\end{align*}
The second line follows from the fact that $(\mtx{X}, \mtx{X}')$ is an exchangeable pair,
and the third line depends on the definitions~\eqref{eqn:conditional-variance}
and \eqref{eqn:K-conditional-variance} of the conditional variances.
We have used the regularity condition $\Expect \pnorm{2p}{\mtx{X}}^{2p} < \infty$
to justify the pull-through property of conditional expectation.

Now, we apply H{\"o}lder's inequality for the trace followed by H{\"o}lder's inequality
for the expectation.  These steps yield
\begin{align*}
E &\leq (2p - 1) \left( \Expect \pnorm{p}{ \frac{1}{2} \big(s\, \mtx{V}_{\mtx{X}} + s^{-1}\, \mtx{V}^{\mtx{K}} \big) }^p
	\right)^{1/p} \left( \Expect \pnorm{2p}{ \mtx{X} }^{2p} \right)^{(p-1)/p} \\
	&= (2p - 1) \left( \Expect \pnorm{p}{ \frac{1}{2} \big(s\, \mtx{V}_{\mtx{X}} + s^{-1}\, \mtx{V}^{\mtx{K}} \big) }^p
	\right)^{1/p} E^{(p-1)/p}.
\end{align*}
Solve this algebraic identity for $E$ to determine that
$$
E^{1/(2p)} \leq \sqrt{2p - 1} \left( \Expect \pnorm{p}{ \frac{1}{2} \big(s \,\mtx{V}_{\mtx{X}}
	+ s^{-1}\, \mtx{V}^{\mtx{K}} \big) } \right)^{1/(2p)}.
$$
This completes the proof of Theorem~\ref{thm:bdg-inequality}.

\section{Constructing a Kernel via Markov Chain Coupling}
\label{sec:kernels}

Theorem~\ref{thm:bdg-inequality} is one of the main steps toward
the polynomial Efron--Stein inequality for random matrices.
To reach the latter result, we need to develop an explicit construction
for the kernel Stein pair along with concrete bounds for
the conditional variance.  We present this material in the current section, 
and we establish the Efron--Stein bound in Section~\ref{sec:poly-efronstein}.
The analysis leading to exponential concentration inequalities is somewhat more involved.
We postpone these results until Section~\ref{sec:concentration-bdd}.

\subsection{Overview}

For a random matrix $\mtx{X}$ that is presented as part of a kernel
Stein pair, Theorem~\ref{thm:bdg-inequality} provides strong bounds
on the polynomial moments in terms of the conditional variances.
To make this result effective, we need to address several more
questions.

First, given an exchangeable pair of random matrices, we can ask
whether it is possible to equip the pair with a kernel
that satisfies~\eqref{eqn:K-stein-pair}.  In fact, there is a
general construction that works whenever the exchangeable
pair is suitably ergodic.  This method depends on 
an idea~\cite[Sec.~4.1]{Cha08:Concentration-Inequalities}
that ultimately relies on an observation of Stein, cf.~\citep{Stein86}.
We describe this approach in Sections~\ref{sec:couplings} and \ref{sec:poisson}.

Second, we can ask whether there is a mechanism for bounding the
conditional variances in terms of simpler quantities.
We have developed some new tools for performing these estimates.
These methods appear in Sections~\ref{sec:cond-var-bd-1} and~\ref{sec:cond-var-bd-2}.

\subsection{Kernel Couplings} \label{sec:couplings}

Stein noticed that each exchangeable pair $(Z,Z')$
of $\metricspace$-valued random variables yields a reversible Markov chain
with a symmetric transition kernel $P$ given by
$$
Pf(z) \defby \Expect[ f(Z') \condl Z = z]
$$
for each function $f : \metricspace \to \R$ that satisfies
$\Expect \abs{f(Z)} < \infty$.
In other words, for any initial value $Z_{(0)} \in \metricspace$,
we can construct a Markov chain
$$
Z_{(0)} \to Z_{(1)} \to Z_{(2)} \to Z_{(3)} \to \cdots
$$
where $\Expect[ f(Z_{(i+1)}) \condl Z_{(i)} ] = Pf(Z_{(i)})$
for each integrable function $f$.  This requirement suffices
to determine the distribution of each $Z_{(i+1)}$.

When the chain $(Z_{(i)})_{i \geq 0}$ is ergodic enough,
we can explicitly construct a kernel that
satisfies~\eqref{eqn:K-stein-pair} for any exchangeable
pair of random matrices constructed from the auxiliary
exchangeable pair $(Z,Z')$.  To explain this idea, we
adapt a definition
from~\cite[Sec.~4.1]{Cha08:Concentration-Inequalities}.

\begin{defn}[Kernel Coupling] \label{def:kernel-coupling}
Let $(Z,Z') \in \metricspace^2$ be an exchangeable pair. 
Let $(Z_{(i)})_{i\geq 0}$ and $(Z'_{(i)})_{i\geq 0}$
be two Markov chains with arbitrary initial values, each
evolving according to the transition kernel $P$ induced
by $(Z,Z')$.
We call $(Z_{(i)},Z'_{(i)})_{i\geq 0}$ a \emph{kernel coupling} for $(Z,Z')$ if
\begin{align} \label{eqn:kernel-coupling}
Z_{(i)} \indep Z_{(0)}' \bcondl Z_{(0)}\quad\text{and}\quad Z'_{(i)} \indep Z_{(0)} \bcondl Z'_{(0)} \quad \text{for all $i$.}
\end{align}
The notation $U \indep V \condl W$ means $U$ and $V$ are independent conditional on $W$.
\end{defn}

For an example of kernel coupling, consider the simple random walk on the hypercube
$\{\pm 1\}^n$ where two vertices are neighbors when they differ in exactly one coordinate.
We can start two random walks at two different locations on the
cube.  At each step, we select a uniformly random coordinate from $\{1, \dots, n\}$
and a uniformly random value from $\{\pm 1\}$.  We update \emph{both} of the walks
by replacing the \emph{same} chosen coordinate with the \emph{same}
chosen value.  The two walks arrive at the same vertex (i.e., they \term{couple})
as soon as we have updated each coordinate at least once.

\subsection{Kernel Stein Pairs from the Poisson Equation}
\label{sec:poisson}

Chatterjee~\cite[Sec.4.1]{Cha08:Concentration-Inequalities} observed
that it is often possible to construct a kernel coupling
by solving the Poisson equation for the Markov chain
with transition kernel $P$.

\begin{prop} \label{lem:kernel-coupling}
	Let $(Z_{(i)},Z'_{(i)})_{i\geq 0}$ be a kernel coupling for an exchangeable pair $(Z,Z') \in \metricspace^2$.
	Let $\mtx{\Psi} : \metricspace \to \Sym{d}$ be a bounded, measurable function
	with $\Expect\mtx{\Psi}(Z) = \zeromtx.$
	Suppose there is a positive constant $L$ for which
	\begin{equation}\label{eqn:coupling-premise}
	\sum\nolimits_{i=0}^\infty \norm{ \Expect\big[\mtx{\Psi}(Z_{(i)}) - \mtx{\Psi}(Z'_{(i)})
		\bcondl Z_{(0)}=z,Z'_{(0)}=z'\big]} \leq L \quad\text{for all $z, z' \in \metricspace$}.
	\end{equation}
	Then $(\mtx{\Psi}(Z), \mtx{\Psi}(Z'))$ is a $\mtx{K}$-Stein pair
	with kernel 
	\begin{align} \label{eqn:K-construct}
	\mtx{K}(z,z') \defby \sum\nolimits_{i=0}^\infty \Expect\big[\mtx{\Psi}(Z_{(i)})
		- \mtx{\Psi}(Z'_{(i)})\bcondl Z_{(0)}=z,Z'_{(0)}=z'\big].
	\end{align}
\end{prop}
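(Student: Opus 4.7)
The plan is to rewrite $\mtx{K}(z,z')$ as a telescoping series on the Markov semigroup associated with the exchangeable pair, then verify antisymmetry and the reproducing property $\Expect[\mtx{K}(Z,Z')\bcondl Z]=\mtx{\Psi}(Z)$ directly from that representation. Let $P$ denote the transition kernel $Pf(z)=\Expect[f(Z')\condl Z=z]$, and let $P^i\mtx{\Psi}$ denote the iterated action on the matrix-valued function $\mtx{\Psi}$. The kernel-coupling independence~\eqref{eqn:kernel-coupling} immediately gives, for all $i\geq 0$ and all $z,z' \in \metricspace$,
$$
\Expect[\mtx{\Psi}(Z_{(i)})\bcondl Z_{(0)}=z,\ Z'_{(0)}=z']=P^i\mtx{\Psi}(z) \quad\text{and}\quad \Expect[\mtx{\Psi}(Z'_{(i)})\bcondl Z_{(0)}=z,\ Z'_{(0)}=z']=P^i\mtx{\Psi}(z').
$$
Combined with the summability hypothesis~\eqref{eqn:coupling-premise}, this collapses the definition~\eqref{eqn:K-construct} to
$$
\mtx{K}(z,z')=\sum_{i=0}^\infty \bigl(P^i\mtx{\Psi}(z)-P^i\mtx{\Psi}(z')\bigr),
$$
with the series absolutely convergent in operator norm uniformly in $(z,z')$. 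Antisymmetry~\eqref{eqn:K-antisymmetry} is then visible term by term.

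For the reproducing property~\eqref{eqn:K-stein-pair}, I would apply the tower rule together with the elementary identity $\Expect[g(Z')\condl Z]=Pg(Z)$ to each summand. The uniform bound~\eqref{eqn:coupling-premise} lets dominated convergence exchange the infinite sum with the conditional expectation, yielding
$$
\Expect[\mtx{K}(Z,Z')\bcondl Z=z] = \sum_{i=0}^\infty \bigl(P^i\mtx{\Psi}(z)-P^{i+1}\mtx{\Psi}(z)\bigr) = \mtx{\Psi}(z)-\lim_{n\to\infty}P^n\mtx{\Psi}(z),
$$
where the telescoping makes the finite partial sums explicit and the hypothesis guarantees that the tail, hence the limit $\lim_n P^n\mtx{\Psi}(z)$, exists.

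The main obstacle is identifying this limit as $\zeromtx$. The summability~\eqref{eqn:coupling-premise} forces each term $P^i\mtx{\Psi}(z)-P^i\mtx{\Psi}(z')$ to vanish as $i\to\infty$, so the pointwise limit $L(z):=\lim_n P^n\mtx{\Psi}(z)$ is independent of $z$. Since the law of $Z$ is stationary for $P$, one has $\Expect[P^n\mtx{\Psi}(Z)]=\Expect[\mtx{\Psi}(Z)]=\zeromtx$ for every $n$; boundedness of $\mtx{\Psi}$ lets dominated convergence pass the limit through the expectation, forcing the common value to equal $\zeromtx$. Hence $\Expect[\mtx{K}(Z,Z')\bcondl Z]=\mtx{\Psi}(Z)$ almost surely. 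Together with antisymmetry and the integrability condition $\Expect\normsq{\mtx{\Psi}(Z)}<\infty$ (immediate from boundedness of $\mtx{\Psi}$), this verifies the three axioms of Definition~\ref{def:K-stein-pair} and completes the proof.
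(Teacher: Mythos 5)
Your proof is correct and follows the same approach the paper points to (the paper defers to Chatterjee's Lemmas~4.1--4.2, whose argument is exactly this telescoping of the Poisson equation). The one place worth being explicit is that the existence of $\lim_n P^n\mtx{\Psi}(z)$ is itself deduced from the telescoping identity $\sum_{i=0}^N(P^i\mtx{\Psi}(z)-P^{i+1}\mtx{\Psi}(z))=\mtx{\Psi}(z)-P^{N+1}\mtx{\Psi}(z)$ together with the $L$-bounded partial sums, rather than assumed; you do supply this, so the argument is complete.
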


\noindent
The proof of this result is identical with that of \cite[Lem.~4.1 and 4.2]{Cha08:Concentration-Inequalities}, which establishes Proposition~\ref{lem:kernel-coupling} in the scalar setting.  We omit the details.

\begin{rem}[Regularity]
Proposition~\ref{lem:kernel-coupling} holds for
functions $\mtx{\Psi}$ that satisfy conditions weaker
than boundedness.  We focus on the simplest case to
reduce the technical burden.
\end{rem}

\subsection{Bounding the Conditional Variances I}
\label{sec:cond-var-bd-1}

The construction described in Proposition~\ref{lem:kernel-coupling}
is valuable because it leads to an explicit description of the kernel.
In many examples, this formula allows us to develop a succinct bound on the conditional variances.
We encapsulate the required calculations in a technical lemma.

\begin{lemma} \label{lem:conditional-variance-bound}
Instate the notation and hypotheses of Proposition~\ref{lem:kernel-coupling},
and define the kernel Stein pair $(\mtx{X}, \mtx{X}') = (\mtx{\Psi}(Z), \mtx{\Psi}(Z'))$.
For each $i = 0, 1, 2, \dots$, assume that
\begin{align} \label{eqn:K-term-bound}
\Expect\Big[ \Big( \Expect\big[\mtx{\Psi}(Z_{(i)}) - \mtx{\Psi}(Z'_{(i)})\bcondl Z_{(0)} = Z, Z'_{(0)} = Z' \big] \Big)^2 \bcondl Z \Big] 
	\psdle \beta_i^2 \, \mtx{\Gamma}_i
\end{align}
where $\beta_i$ is a nonnegative number and $\mtx{\Gamma}_i \in \Sym{d}$ is a random matrix.
Then the conditional variance~\eqref{eqn:conditional-variance} and kernel
conditional variance~\eqref{eqn:K-conditional-variance} satisfy
\begin{align*} 
\mtx{V}_{\mtx{X}} 
	\psdle \frac{1}{2} \beta_0^2 \, \mtx{\Gamma}_0
	\quad\text{and}\quad
\mtx{V}^{\mtx{K}} 
	\psdle \frac{1}{2} \left(\sum\nolimits_{j=0}^\infty \beta_j \right) \sum\nolimits_{i=0}^\infty \beta_i\, \mtx{\Gamma}_i.
\end{align*}
\end{lemma}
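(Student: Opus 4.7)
The plan is to handle the two bounds separately, since the $\mtx{V}_{\mtx{X}}$ estimate is essentially a direct reading of the hypothesis, whereas the $\mtx{V}^{\mtx{K}}$ estimate requires a little matrix analysis to square a series.

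For $\mtx{V}_{\mtx{X}}$, I specialize~\eqref{eqn:K-term-bound} to $i=0$. Since $Z_{(0)} = Z$ and $Z'_{(0)} = Z'$, the inner conditional expectation collapses: $\Expect[\mtx{\Psi}(Z_{(0)}) - \mtx{\Psi}(Z'_{(0)}) \bcondl Z_{(0)}=Z, Z'_{(0)}=Z'] = \mtx{\Psi}(Z) - \mtx{\Psi}(Z') = \mtx{X} - \mtx{X}'$. The hypothesis then reads $\Expect[(\mtx{X}-\mtx{X}')^2 \bcondl Z] \psdle \beta_0^2\, \mtx{\Gamma}_0$, and halving yields $\mtx{V}_{\mtx{X}} \psdle \tfrac{1}{2} \beta_0^2\, \mtx{\Gamma}_0$.

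For $\mtx{V}^{\mtx{K}}$, I would introduce the shorthand $\mtx{D}_i \defby \Expect[\mtx{\Psi}(Z_{(i)}) - \mtx{\Psi}(Z'_{(i)}) \bcondl Z_{(0)}=Z, Z'_{(0)}=Z'] \in \Sym{d}$, so that the kernel formula~\eqref{eqn:K-construct} becomes $\mtx{K}(Z,Z') = \sum_{i \geq 0} \mtx{D}_i$, the series converging absolutely in operator norm by~\eqref{eqn:coupling-premise}. The central technical ingredient is a weighted matrix Cauchy--Schwarz inequality: for Hermitian matrices $\{\mtx{D}_i\}$ and positive weights $\{\beta_i\}$ with $\sum_j \beta_j < \infty$,
\[
\Big(\sum\nolimits_{i \geq 0} \mtx{D}_i\Big)^2 \psdle \Big(\sum\nolimits_{j \geq 0} \beta_j\Big) \sum\nolimits_{i \geq 0} \beta_i^{-1} \mtx{D}_i^2.
\]
I would verify this by testing against an arbitrary vector $\vct{x}$. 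Since each $\mtx{D}_i$ is Hermitian, the left-hand quadratic form equals $\enormsq{\sum_i \mtx{D}_i \vct{x}}$, and the right-hand quadratic form equals $(\sum_j \beta_j) \sum_i \beta_i^{-1} \enormsq{\mtx{D}_i \vct{x}}$; the scalar Cauchy--Schwarz inequality applied to the pairs $\sqrt{\beta_i}$ and $\beta_i^{-1/2} \enorm{\mtx{D}_i \vct{x}}$ closes the gap.

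With this tool in hand, I apply the inequality to $\mtx{K}(Z,Z')$, take conditional expectation given $Z$ term-by-term (justified by absolute summability from~\eqref{eqn:coupling-premise} and~\eqref{eqn:K-term-bound}), and invoke the hypothesis $\Expect[\mtx{D}_i^2 \bcondl Z] \psdle \beta_i^2 \mtx{\Gamma}_i$ in each summand. The factors $\beta_i^{-1}$ and $\beta_i^2$ collapse to $\beta_i$, leaving
\[
\Expect\big[ \mtx{K}(Z,Z')^2 \bcondl Z \big] \psdle \Big(\sum\nolimits_{j \geq 0} \beta_j\Big) \sum\nolimits_{i \geq 0} \beta_i\, \mtx{\Gamma}_i,
\]
and halving gives the stated bound on $\mtx{V}^{\mtx{K}}$. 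The main obstacle is securing the weighted matrix Cauchy--Schwarz lemma and ensuring cleanly that the infinite summations commute with the conditional expectation; once those foundations are laid, the rest of the argument is essentially algebraic bookkeeping.
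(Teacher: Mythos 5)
Your proposal is correct and follows essentially the same structure as the paper: handle $\mtx{V}_{\mtx{X}}$ via the $i=0$ hypothesis, and bound $\mtx{V}^{\mtx{K}}$ by establishing the weighted operator inequality $\big(\sum_i \mtx{Y}_i\big)^2 \psdle \big(\sum_j \beta_j\big) \sum_i \beta_i^{-1}\mtx{Y}_i^2$ and then applying~\eqref{eqn:K-term-bound}. The only deviation is in how you prove that weighted Cauchy--Schwarz: you test against an arbitrary vector $\vct{x}$, reduce to $\enorm{\mtx{D}_i\vct{x}}$, and invoke scalar Cauchy--Schwarz, whereas the paper expands $\big(\sum_i \mtx{Y}_i\big)^2 = \sum_{i,j}\real(\mtx{Y}_i\mtx{Y}_j)$ and bounds each cross term with the matrix AM--GM inequality~\eqref{eqn:matrix-am-gm}, i.e., $\real(\mtx{Y}_i\mtx{Y}_j) \psdle \frac{1}{2}\big(\tfrac{\beta_j}{\beta_i}\mtx{Y}_i^2 + \tfrac{\beta_i}{\beta_j}\mtx{Y}_j^2\big)$. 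Both derivations are elementary and yield the identical intermediate estimate; yours is a touch more self-contained (it does not lean on an earlier display from Section~\ref{sec:notation}), while the paper's leaves the quadratic form bound purely in the matrix algebra. One small housekeeping point you should make explicit: since you write $\beta_i^{-1}$, you need $\beta_i > 0$; the paper dispatches the degenerate case with a one-line continuity argument, and you can alternatively observe that $\beta_i = 0$ forces $\mtx{D}_i = \mtx{0}$ a.s.\ via~\eqref{eqn:K-term-bound}, so those terms drop out.
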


\begin{proof}
By a continuity argument, we may assume that $\beta_i > 0$ for each index $i$.
Write
$$
\mtx{Y}_i \defby \Expect\big[\mtx{\Psi}(Z_{(i)}) - \mtx{\Psi}(Z'_{(i)})\bcondl Z_{(0)} = Z,Z'_{(0)} = Z' \big].
$$
The definition~\eqref{eqn:conditional-variance} of the
conditional variance $\mtx{V}_{\mtx{X}}$ immediately implies
\begin{align*}
\mtx{V}_{\mtx{X}}
	= \frac{1}{2} \Expect \big[ (\mtx{X} - \mtx{X}')^2 \bcondl Z \big]
	= \frac{1}{2}
	\Expect\big[\mtx{Y}_0^2 \bcondl Z \big] 
	\psdle\frac{1}{2} \beta_0^2 \, \mtx{\Gamma}_0.
\end{align*}
The semidefinite relation follows from the hypothesis~\eqref{eqn:K-term-bound}.

According to the definition~\eqref{eqn:K-conditional-variance}
of the kernel conditional variance $\mtx{V}^{\mtx{K}}$ and
the kernel construction~\eqref{eqn:K-construct}, we have
$$
\begin{aligned}
\mtx{V}^{\mtx{K}} = \frac{1}{2} \Expect \big[ \mtx{K}(Z,Z') \condl Z \big]
	&= \frac{1}{2} \Expect\left[ \left(\sum\nolimits_{i=0}^\infty \mtx{Y}_i \right)^2 \bcondl Z \right] \\
	&= \frac{1}{2}\sum\nolimits_{i=0}^\infty \sum\nolimits_{j=0}^\infty 
	\Expect\big[\real(\mtx{Y}_i\mtx{Y}_j) \bcondl Z\big].
\end{aligned}
$$
The semidefinite bound~\eqref{eqn:matrix-am-gm} for the real part of a product implies that
\begin{align*}
\mtx{V}^{\mtx{K}} 
	&\psdle \frac{1}{2}\sum\nolimits_{i=0}^\infty \sum\nolimits_{j=0}^\infty 
	\frac{1}{2}\left(\frac{\beta_j}{\beta_i}\Expect\big[\mtx{Y}_i^2\bcondl Z\big]
		+ \frac{\beta_i}{\beta_j}\Expect\big[\mtx{Y}_j^2\bcondl Z\big]\right) \\
	&\psdle \frac{1}{2}\sum\nolimits_{i=0}^\infty \sum\nolimits_{j=0}^\infty 
	\frac{1}{2}\left(\frac{\beta_j}{\beta_i}\beta_i^2 \, \mtx{\Gamma}_i + \frac{\beta_i}{\beta_j}\beta_j^2 \,\mtx{\Gamma}_j \right) \\
	&=\frac{1}{2} \left(\sum\nolimits_{j=0}^\infty \beta_j \right) \sum\nolimits_{i=0}^\infty \beta_i \, \mtx{\Gamma}_i.
\end{align*}
The second relation depends on the hypothesis \eqref{eqn:K-term-bound}.
\end{proof}

\subsection{Bounding the Conditional Variances II}
\label{sec:cond-var-bd-2}

The random matrices $\mtx{\Gamma}_i$ that arise in Lemma~\ref{lem:conditional-variance-bound}
often share a common form.  We can use this property to obtain a succinct bound for the conditional
variance expression that appears in Theorem~\ref{thm:bdg-inequality}.
This reduction allows us to establish Efron--Stein inequalities.

\begin{lemma} \label{lem:convex-conditional-variance-bound}
Instate the notation and hypotheses of Lemma~\ref{lem:conditional-variance-bound}.
Suppose
\begin{equation} \label{eqn:cond-var-hyp-ii}
\mtx{\Gamma}_i = \Expect\big[ \mtx{W}_{(i)} \condl Z \big]
\quad\text{where}\quad
\mtx{W}_{(i)} \sim \mtx{\Gamma}_0
\quad\text{for each $i \geq 1$.}
\end{equation}
Then, for each weakly increasing and convex function $f : \R_+ \to \R$,
$$
\Expect \trace f\big(\beta_0^{-2} \, \mtx{V}_{\mtx{X}}
	+ (\textstyle\sum\nolimits_{i=0}^\infty \beta_i)^{-2} \, \mtx{V}^{\mtx{K}} \big)
	\leq \Expect \trace f\left(\mtx{\Gamma}_0\right).
$$
\end{lemma}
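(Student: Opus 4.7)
The plan is to chain three simple observations: (a) the two bounds in Lemma~\ref{lem:conditional-variance-bound} combine into a convex combination of the random matrices $\mtx{\Gamma}_i$, (b) operator convexity turns $\trace f$ of this combination into a weighted sum, and (c) conditional Jensen together with the distributional identity $\mtx{W}_{(i)} \sim \mtx{\Gamma}_0$ reduces each term to $\Expect \trace f(\mtx{\Gamma}_0)$.

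First I would set $S = \sum_{j \geq 0} \beta_j$ and substitute the two bounds from Lemma~\ref{lem:conditional-variance-bound} into the argument of $f$, obtaining
\begin{equation*}
\beta_0^{-2}\, \mtx{V}_{\mtx{X}} + S^{-2}\, \mtx{V}^{\mtx{K}}
\psdle \tfrac{1}{2}\, \mtx{\Gamma}_0 + \tfrac{1}{2S} \sum_{i \geq 0} \beta_i\, \mtx{\Gamma}_i
= \sum_{i \geq 0} \alpha_i\, \mtx{\Gamma}_i,
\end{equation*}
where $\alpha_0 = \tfrac{1}{2} + \tfrac{\beta_0}{2S}$ and $\alpha_i = \tfrac{\beta_i}{2S}$ for $i \geq 1$. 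A quick accounting shows $\sum_{i \geq 0} \alpha_i = \tfrac{1}{2} + \tfrac{1}{2S}\sum_{i\ge 0}\beta_i = 1$, so the right-hand side is a genuine convex combination of the $\mtx{\Gamma}_i$.

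Next, because $f$ is weakly increasing, monotonicity of trace functions \eqref{eqn:trace-monotone} lets me pass to $\trace f$ of the displayed inequality. Operator convexity of the trace \eqref{eqn:trace-convex}, extended from a pair to a countable convex combination by induction and monotone convergence, then gives
\begin{equation*}
\trace f\bigl(\beta_0^{-2}\, \mtx{V}_{\mtx{X}} + S^{-2}\, \mtx{V}^{\mtx{K}}\bigr)
\leq \sum_{i \geq 0} \alpha_i\, \trace f(\mtx{\Gamma}_i).
\end{equation*}

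To finish, I would handle each $\trace f(\mtx{\Gamma}_i)$ separately using conditional Jensen: since $\mtx{\Gamma}_i = \Expect[\mtx{W}_{(i)} \condl Z]$ and $\trace f$ is convex, $\trace f(\mtx{\Gamma}_i) \leq \Expect[\trace f(\mtx{W}_{(i)}) \condl Z]$. Taking unconditional expectation and invoking $\mtx{W}_{(i)} \sim \mtx{\Gamma}_0$ yields $\Expect \trace f(\mtx{\Gamma}_i) \leq \Expect \trace f(\mtx{\Gamma}_0)$ for every $i$; summing against the weights $\alpha_i$, which total one, collapses the weighted sum to the desired $\Expect \trace f(\mtx{\Gamma}_0)$. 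The only real delicacy is bookkeeping: recognizing that the particular normalization by $\beta_0^{-2}$ and $S^{-2}$ in the statement is precisely what makes the two bounds of Lemma~\ref{lem:conditional-variance-bound} assemble into a convex combination in the first place. Convergence of the infinite sum causes no trouble because $\mtx{\Psi}$ is bounded, so every $\mtx{\Gamma}_i$ has uniformly controlled spectrum and $\sum_i \beta_i < \infty$ by hypothesis \eqref{eqn:coupling-premise}.
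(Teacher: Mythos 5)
Your proof is correct and follows essentially the same route as the paper's: substitute the two bounds of Lemma~\ref{lem:conditional-variance-bound}, use monotonicity and operator convexity of $\trace f$ to split into a sum over the $\mtx{\Gamma}_i$, then apply conditional Jensen together with $\mtx{W}_{(i)} \sim \mtx{\Gamma}_0$ to collapse each term to $\Expect \trace f(\mtx{\Gamma}_0)$. Your explicit $\alpha_i$-bookkeeping and the remark about extending \eqref{eqn:trace-convex} to countable convex combinations merely spell out details the paper leaves implicit.
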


\begin{proof}
Abbreviate $B = \sum_{i=0}^\infty \beta_i$.  Lemma~\ref{lem:conditional-variance-bound} provides that
\begin{align*} 
\mtx{V}_{\mtx{X}} 
	\psdle \frac{1}{2} \beta_0^2 \, \mtx{\Gamma}_0
\qtext{and}	
\mtx{V}^{\mtx{K}} 
	\psdle \frac{1}{2} B \sum\nolimits_{i=0}^\infty \beta_i \, \mtx{\Gamma}_i.
\end{align*}
Since $f$ is weakly increasing and convex on $\R_+$, the function $\trace f : \Sym{d}_+ \to \R$
is weakly increasing~\eqref{eqn:trace-monotone} and convex~\eqref{eqn:trace-convex}.  Therefore,
\begin{align*}
\Expect \trace f\big(\beta_0^{-2} \, \mtx{V}_{\mtx{X}} + B^{-2} \, \mtx{V}^{\mtx{K}} \big)
	&\leq \Expect \trace f\left( \frac{1}{2} \mtx{\Gamma}_0 + \frac{1}{2B} \sum\nolimits_{i=0}^\infty \beta_i \,\mtx{\Gamma}_i \right) \\
	&\leq \frac{1}{2} \Expect \trace f( \mtx{\Gamma}_0 )
	+ \frac{1}{2B} \sum\nolimits_{i=0}^\infty \beta_i \Expect \trace f(\mtx{\Gamma}_i).
\end{align*}
In view of~\eqref{eqn:cond-var-hyp-ii}, Jensen's inequality and the tower property together yield
\begin{align*}
\Expect \trace f( \mtx{\Gamma}_i )
	= \Expect \trace f\big( \Expect\big[ \mtx{W}_{(i)} \condl Z \big] \big)
	\leq \Expect \trace f( \mtx{W}_{(i)} )
	= \Expect \trace f( \mtx{\Gamma}_0) .
\end{align*}
Combine the latter two displays to complete the argument.
\end{proof}

\section{The Polynomial Efron--Stein Inequality for a Random Matrix}
\label{sec:poly-efronstein}

We are now prepared to establish the polynomial Efron--Stein
inequality, Theorem~\ref{thm:mxmoment}.
We retain the notation and hypotheses from Section~\ref{sec:setup},
and we encourage the reader to review this material before continuing.
The proof is divided into two parts.  First, we assume that
the random matrix is bounded so that the kernel coupling tools
apply.  Then, we use a truncation argument to remove the boundedness
assumption.

\subsection{A Kernel Coupling for a Vector of Independent Variables}
\label{sec:my-kernel}

We begin with the construction of an exchangeable pair.
Recall that $Z := (Z_1, \dots, Z_n) \in \metricspace$ is a vector of
mutually independent random variables.  For each coordinate $j$,
$$
Z^{(j)} := (Z_1, \dots, \widetilde{Z}_j, \dots, Z_n) \in \metricspace
$$
where $\widetilde{Z}_j$ is an independent copy of $Z_j$.
Form the random vector
\begin{equation} \label{eqn:Z'}
Z' := Z^{(J)}
\quad\text{where}\quad
J \sim \textsc{uniform}\{1, \dots, n\}.
\end{equation}
We may assume that $J$ is drawn independently from $Z$.
It follows that $(Z, Z')$ is exchangeable.

Next, we build an explicit kernel coupling $(Z_{(i)}, Z'_{(i)})_{i \geq 0}$
for the exchangeable pair $(Z,Z')$.
The Markov chains may take arbitrary initial values $Z_{(0)}$ and $Z'_{(0)}$.
For each time $i \geq 1$, we let both chains evolve via the same random choice:

\begin{enumerate}
\item	Independent of prior choices, draw a coordinate $J_i \sim \textsc{uniform}\{1, \dots, n\}$.

\item	Draw an independent copy $\widetilde{Z}_{(i)}$ of $Z$.

\item	Form $Z_{(i)}$ by replicating $Z_{(i-1)}$ and then replacing the $J_{i}$-th coordinate
with the $J_{i}$-th coordinate of $\widetilde{Z}_{(i)}$.

\item	Form $Z'_{(i)}$ by replicating $Z'_{(i-1)}$ and then replacing the $J_{i}$-th coordinate
with the $J_{i}$-th coordinate of $\widetilde{Z}_{(i)}$.
\end{enumerate}

\noindent
By construction, $(Z_{(i)}, Z'_{(i)})_{i \geq 0}$ satisfies the kernel coupling
property~\eqref{eqn:kernel-coupling}.
This coupling is drawn from~\cite[Sec.~4.1]{Cha08:Concentration-Inequalities}.
Note that this is just a glorification of the hypercube example in Section~\ref{sec:couplings}!

\subsection{A Kernel Stein Pair}

Let $\mtx{H} : \metricspace \to \Sym{d}$ be a bounded, measurable function.
Construct the random matrices
\begin{equation} \label{eqn:my-stein}
\mtx{X} := \mtx{H}(Z) - \Expect \mtx{H}(Z)
\quad\text{and}\quad
\mtx{X}' := \mtx{H}(Z') - \Expect \mtx{H}(Z).
\end{equation}
To verify that $(\mtx{X}, \mtx{X}')$ is a kernel Stein pair,
we use Lemma~\ref{lem:kernel-coupling} to construct a kernel.
For all $z, z' \in \metricspace$,
\begin{equation} \label{eqn:es-kernel}
\mtx{K}(z, z') := \sum_{i=0}^\infty
	\Expect \big[ \mtx{H}(Z_{(i)}) - \mtx{H}(Z'_{(i)}) \bcondl Z_{(0)} = z, Z'_{(0)} = z' \big].
\end{equation}
To verify the regularity condition for the lemma, notice that the two chains couple
as soon as we have refreshed all $n$ coordinates.  According to the analysis of
the coupon collector problem~\cite[Sec.~2.2]{LPW10:Markov-Chains}, the expected
coupling time is bounded by $n(1 + \log n)$.  Since $\norm{\mtx{H}(Z)}$ is bounded,
the hypothesis~\eqref{eqn:coupling-premise} is in force.

\subsection{The Evolution of the Kernel Coupling}
\label{eqn:my-evol}

Draw a realization $(Z, Z')$ of the exchangeable pair,
and write $J$ for the coordinate where $Z$ and $Z'$ differ.
Let $(Z_{(i)}, Z'_{(i)})_{i \geq 0}$ be the kernel coupling
described in the last section, starting at $Z_{(0)} = Z$
and $Z'_{(0)} = Z'$.  Therefore, the initial value of the
kernel coupling is a pair of vectors that differ in precisely one coordinate.
Because of the coupling construction,
$$
\mtx{H}(Z_{(i)}) - \mtx{H}(Z'_{(i)})=(\mtx{H}(Z_{(i)}) - \mtx{H}(Z'_{(i)}))\cdot \II[J \notin \{ J_1, \dots, J_i \}] .
$$
The operator Schwarz inequality~\cite[Eqn.~(3.19)]{Bha07:Positive-Definite} implies that
\begin{align} \label{eqn:es-bd-1}
\Big( \Expect \big[ & \mtx{H}(Z_{(i)}) - \mtx{H}(Z'_{(i)}) \bcondl Z, Z' \big] \Big)^2 \notag \\
	&= \Big( \Expect \big[
		(\mtx{H}(Z_{(i)}) - \mtx{H}(Z'_{(i)})) \cdot \II[J \notin \{ J_1, \dots, J_i \}  ] \bcondl Z, Z'\big] \Big)^2 \notag \\
	&\psdle \Expect \big[
		(\mtx{H}(Z_{(i)}) - \mtx{H}(Z'_{(i)}))^2\bcondl Z, Z'\big] \cdot \Expect \big[
		\II[J \notin \{ J_1, \dots, J_i \}]\bcondl Z, Z'\big] \notag \\
	&= (1 - 1/n)^{i} \cdot \Expect \Big[
		\big( \mtx{H}(Z_{(i)}) - \mtx{H}(Z'_{(i)}) \big)^2 \bcondl Z, Z'\Big].
\end{align}
Take the conditional expectation with respect to $Z$, and invoke the tower property to reach
\begin{align} \label{eqn:es-bd-2}
\Expect \Big[ \Big( \Expect \big[ & \mtx{H}(Z_{(i)}) - \mtx{H}(Z'_{(i)}) \bcondl Z, Z' \big] \Big)^2 \bcondl Z \Big]
	\notag \\
	& \psdle (1 - 1/n)^{i} \cdot \Expect \Big[
		\big( \mtx{H}(Z_{(i)}) - \mtx{H}(Z'_{(i)}) \big)^2 \bcondl Z\Big].
\end{align}

\subsection{Conditional Variance Bounds}

To obtain a bound for the expression~\eqref{eqn:es-bd-2} that satisfies the prerequisites of \lemref{convex-conditional-variance-bound},
we will replace $Z'_{(i)}$ with a variable $Z^*_{(i)}$ that satisfies
$$
(Z_{(i)},Z^*_{(i)}) \sim (Z,Z')
\qtext{and}
Z^*_{(i)} \indep Z \mid Z_{(i)}.
$$
For $i\ge 0$, define $Z^*_{(i)}$ as being equal to $Z_{(i)}$ everywhere except in coordinate $J$, where it equals $Z'_{J}$. 
Since $(J, Z'_{J})\indep Z\mid Z_{(i)}$, we have our desired conditional independence.
Moreover, this definition ensures that $Z^*_{(i)}=Z'_{(i)}$ whenever $J\notin \{J_1,\ldots,J_i\}$. 
Therefore,
$$
\Expect \Big[\big( \mtx{H}(Z_{(i)}) - \mtx{H}(Z'_{(i)}) \big)^2 \bcondl Z\Big]
\psdle 
\Expect \Big[\big( \mtx{H}(Z_{(i)}) - \mtx{H}(Z^*_{(i)}) \big)^2 \bcondl Z\Big].
$$
Consequently, the hypothesis~\eqref{eqn:K-term-bound} of Lemma~\ref{lem:conditional-variance-bound} is valid with
\begin{equation} \label{eqn:my-gamma-def}
\mtx{\Gamma}_i := \Expect \Big[
	\big( \mtx{H}(Z_{(i)}) - \mtx{H}(Z^*_{(i)}) \big)^2 \bcondl Z \Big]
\end{equation}
and $\beta_i := (1 - 1/n)^{i/2}$.

Now, let us have a closer look at the form of $\mtx{\Gamma}_i$.
The tower property and conditional independence of $(Z^*_{(i)}, Z)$ imply that 
\begin{align*}
\mtx{\Gamma}_i
	&= \Expect \Big[ \Expect \Big[ \big( \mtx{H}(Z_{(i)}) - \mtx{H}(Z^*_{(i)}) \big)^2
	\bcondl Z_{(i)}, Z \Big] \bcondl Z \Big]\\
	&=\Expect \Big[ \Expect \Big[ \big( \mtx{H}(Z_{(i)}) - \mtx{H}(Z^*_{(i)}) \big)^2
	\bcondl Z_{(i)}\Big] \bcondl Z \Big].
\end{align*}
Since
\begin{equation} \label{eqn:my-gamma0}
\mtx{\Gamma}_0 = \Expect \Big[ \big(\mtx{H}(Z) - \mtx{H}(Z') \big)^2 \bcondl Z \Big],
\end{equation}
we can express the latter observation as
$$
\mtx{\Gamma}_i = \Expect \big[ \mtx{W}_{(i)} \bcondl Z \big]
\quad\text{where}\quad
\mtx{W}_{(i)} \sim \mtx{\Gamma}_0
$$
by setting
$$
\mtx{W}_{(i)}:=\Expect \Big[ \big( \mtx{H}(Z_{(i)}) - \mtx{H}(Z^*_{(i)}) \big)^2\bcondl Z_{(i)}\Big].
$$
This is the second hypothesis required by Lemma~\ref{lem:convex-conditional-variance-bound}.

\subsection{The Polynomial Efron--Stein Inequality: Bounded Case}
\label{sec:poly-es-proof}

We are prepared to prove
the polynomial Efron--Stein inequality, Theorem~\ref{thm:mxmoment},
for a \emph{bounded} random matrix $\mtx{X}$ of the form~\eqref{eqn:my-stein}.

Let $p$ be a natural number.  Since $(\mtx{X}, \mtx{X}')$ is
a kernel Stein pair, Theorem~\ref{thm:bdg-inequality}
provides that for any $s>0$,
\begin{equation} \label{eqn:poly-es-pf-1}
\left( \Expect \pnorm{2p}{ \mtx{X} }^{2p} \right)^{1/(2p)}
	\leq \sqrt{2p - 1}
	\left( \Expect \pnorm{p}{ \frac{1}{2} \big(s\, \mtx{V}_{\mtx{X}}
	+ s^{-1} \,\mtx{V}^{\mtx{K}} \big) }^p \right)^{1/(2p)}.
\end{equation}
The regularity condition holds because both
the random matrix $\mtx{X}$ and the kernel $\mtx{K}$ are bounded.

Rewrite the Schatten $p$-norm in terms of the trace:
\begin{equation} \label{eqn:poly-es-pf-2}
\Expect \pnorm{p}{ \frac{1}{2} \big(s \, \mtx{V}_{\mtx{X}} + s^{-1}\, \mtx{V}^{\mtx{K}} \big) }^p
	= \Expect \trace \left[ \frac{s}{2} \big( \mtx{V}_{\mtx{X}} + s^{-2} \, \mtx{V}^{\mtx{K}} \big) \right]^p.
\end{equation}
This expression has the form required by Lemma~\ref{lem:convex-conditional-variance-bound}.
Indeed, the function $t \mapsto (st/2)^p$ is weakly increasing and convex on $\R_+$.
Furthermore, we may choose $\beta_0 = 1$ and
\begin{equation*}\label{eq:sdef}
s:=\sum_{i=0}^\infty \beta_i =\left(1-\left(1-\frac{1}{n}\right)^{-1/2}\right)^{-1}< 2n.
\end{equation*}
Lemma~\ref{lem:convex-conditional-variance-bound} now delivers the bound
\begin{equation} \label{eqn:poly-es-pf-3}
\Expect \trace \left[ \frac{s}{2} \big( \mtx{V}_{\mtx{X}} + s^{-2} \, \mtx{V}^{\mtx{K}} \big) \right]^p
	\leq \Expect \trace \left[ \frac{1}{2}s\, \mtx{\Gamma}_0 \right]^{p}
	\le \Expect \pnorm{p}{ 2\cdot \frac{n}{2}\, \mtx{\Gamma}_0 }^p.
\end{equation}

Next, we observe that the random matrix $\half n\, \mtx{\Gamma}_0$ coincides
with the variance proxy $\mtx{V}$ defined in~\eqref{eq:Vdef}.  Indeed,
\begin{align} \label{eq:my-Vdef}
\frac{1}{2} n\, \mtx{\Gamma}_0
	&= \frac{1}{2} n\,\Expect \left[ \big( \mtx{H}(Z) - \mtx{H}(Z') \big)^2 \bcondl Z \right] \notag \\
	&= \frac{1}{2} \sum_{j=1}^n \Expect \left[ \big( \mtx{H}(Z) - \mtx{H}(Z^{(j)}) \big)^2 \bcondl Z \right] \notag \\
	&= \frac{1}{2} \sum_{j=1}^n \Expect \left[ \big( \mtx{X} - \mtx{X}^{(j)} \big)^2 \bcondl Z \right]
	= \mtx{V}.
\end{align}
The first identity is~\eqref{eqn:my-gamma0}.  The second follows from
the definition~\eqref{eqn:Z'} of $Z'$.  The last line harks back
to the definition~\eqref{eqn:Xj} of $\mtx{X}^{(j)}$ and the
variance proxy~\eqref{eq:Vdef}.

Sequence the displays~\eqref{eqn:poly-es-pf-1},~\eqref{eqn:poly-es-pf-2},~\eqref{eqn:poly-es-pf-3},
and~\eqref{eq:my-Vdef} to reach
\begin{equation} \label{eqn:poly-es-bdd}
\left( \Expect \pnorm{2p}{ \mtx{X} }^{2p} \right)^{1/(2p)}
	\leq \sqrt{2(2p - 1)} \left( \Expect \pnorm{p}{ \mtx{V} }^p \right)^{1/(2p)}
	\quad\text{when $\mtx{X}$ is bounded.}
\end{equation}
This complete the proof of Theorem~\ref{thm:mxmoment}
under the assumption that $\mtx{X}$ is bounded.

\subsection{The Polynomial Efron--Stein Inequality: General Case}

Finally, we establish Theorem~\ref{thm:mxmoment}
by removing the stipulation that $\mtx{X}$ is bounded from~\eqref{eqn:poly-es-bdd}.
We assume $\Expect \pnorm{2p}{\mtx{X}}^{2p} < \infty$ so that
there is something to prove.

Let $R > 0$ be a parameter, and introduce the truncated random matrix
$$
\mtx{X}_R = \mtx{X} \cdot \mathds{1}\{\pnorm{2p}{\mtx{X}} \leq R \}
$$
where $\mathds{1}$ denotes the 0--1 indicator of an event.  Apply~\eqref{eqn:poly-es-bdd}
to the bounded and centered random matrix $\mtx{X}_R - \Expect \mtx{X}_R$ to obtain
\begin{equation} \label{eqn:poly-es-trunc}
\left( \Expect \pnorm{2p}{ \mtx{X}_R- \Expect \mtx{X}_R}^{2p} \right)^{1/(2p)}
	\leq \sqrt{2(2p - 1)} \left( \Expect \pnorm{p}{ \mtx{V}_R }^p \right)^{1/(2p)}
\end{equation}
where
$$
\mtx{V}_R := \frac{1}{2} \sum_{j=1}^n \Expect\big[ \big( \mtx{X}_R - \mtx{X}_R^{(j)} \big)^2 \bcondl Z \big]
\quad\text{and}\quad
\mtx{X}^{(j)}_R := \mtx{X}^{(j)} \mathds{1}\{\pnorm{2p}{\smash{\mtx{X}^{(j)}}} \leq R \}.
$$
To complete the argument, we just need to take the limits as $R \to \infty$.

For the left-hand side of~\eqref{eqn:poly-es-trunc}, first observe that
$\pnorm{2p}{\mtx{X}_R} \uparrow \pnorm{2p}{\mtx{X}}$ everywhere.
Therefore,
\begin{equation} \label{eqn:XR-lim}
\Expect \pnorm{2p}{\mtx{X}_R}^{2p}
	\uparrow \Expect \pnorm{2p}{\mtx{X}}^{2p}
\end{equation}
because of the monotone convergence theorem. Now we are left to show that $\Expect \pnorm{2p}{\mtx{X}_R-\Expect \mtx{X}_R}^{2p} -\Expect \pnorm{2p}{\mtx{X}_R}^{2p} \to 0$ as $R\to \infty$. This follows from the bound 
\[\pnorm{2p}{\mtx{X}_R}-\pnorm{2p}{\Expect \mtx{X}_R} \le \pnorm{2p}{\mtx{X}_R-\Expect \mtx{X}_R}\le \pnorm{2p}{\mtx{X}_R}+\pnorm{2p}{\Expect \mtx{X}_R},\]
and the fact that $\lim_{R\to \infty}\pnorm{2p}{\Expect \mtx{X}_R}=0$. To show this fact, first write
\begin{align*}\pnorm{2p}{\Expect \mtx{X}_R}&=\pnorm{2p}{-\Expect(-\mtx{X}+\mtx{X}_R)}=\pnorm{2p}{\Expect(-\mtx{X} \cdot \mathds{1}\{\pnorm{2p}{\mtx{X}} \ge R \})}\\
&\le \Expect(\pnorm{2p}{\mtx{X}} \cdot \mathds{1}\{\pnorm{2p}{\mtx{X}} \ge R \}),\end{align*}
and then apply the monotone convergence theorem.

To treat the right-hand side of~\eqref{eqn:poly-es-trunc},
observe that $\lim_{R \to \infty} \mtx{V}_R \to \mtx{V}$
almost surely because of the dominated convergence theorem
for conditional expectation.  Indeed,
\begin{align*}
\pnorm{p}{ \smash{\big( \mtx{X}_R - \mtx{X}_R^{(j)} \big)^2} \phantom{\big|}\!\! }
	&= \pnorm{2p}{ \smash{\mtx{X}_R - \mtx{X}_R^{(j)}}  \phantom{\big|}\!\! }^2 \\
	&\leq 2\pnorm{2p}{\mtx{X}_R}^{2} + 2\pnorm{2p}{\smash{\mtx{X}_R^{(j)}} \phantom{\big|}\!\!}^{2}
	\leq 2\pnorm{2p}{\mtx{X}}^{2} + 2\pnorm{2p}{\smash{\mtx{X}^{(j)}}  \phantom{\big|}\!\!}^{2}.
\end{align*}
The first relation follows from the identity $\pnorm{p}{\smash{\mtx{A}^2}} = \pnorm{2p}{\mtx{A}}^{2}$.
The right-hand side is integrable because $\mtx{X}^{(j)}$ has the same
distribution as $\mtx{X}$, and we can use Lyapunov's inequality to
increase the powers from two to $2p$.

We can apply the dominated convergence theorem again to see that
\begin{equation} \label{eqn:VR-lim}
\Expect \pnorm{p}{\mtx{V}_R}^{p} \to \Expect \pnorm{p}{\mtx{V}}^p.
\end{equation}
To see why, extend the argument in the last paragraph to reach
$$
\pnorm{p}{\mtx{V}_R}^p \leq (2n)^p \sum_{j=1}^n \Expect\left[
	\pnorm{2p}{\mtx{X}}^{2p} + \pnorm{2p}{\smash{\mtx{X}^{(j)}} \phantom{\big|}\!\!}^{2p} \bcondl Z \right]
$$
The right-hand side is integrable because of the tower property
and our assumption on the integrability of $\pnorm{2p}{\mtx{X}}$.

Take the limit of~\eqref{eqn:poly-es-trunc} as $R \to \infty$
using the expressions~\eqref{eqn:XR-lim} and~\eqref{eqn:VR-lim}.
This completes the proof of Theorem~\ref{thm:mxmoment}.

\section{Exponential Concentration Inequalities} \label{sec:concentration-bdd}

In this section, we develop an exponential moment bound for a kernel Stein pair.
This result shows that we can control the trace mgf in terms of the conditional
variance and the kernel conditional variance.

\begin{thm}[Exponential Moments for a Kernel Stein Pair]
\label{thm:concentration-unbdd}
Suppose that $(\mtx{X}, \mtx{X}') $ is a $\mtx{K}$-Stein pair,
and assume that $\norm{\mtx{X}}$ is bounded. 
For $\psi > 0$, define
\begin{equation} \label{eqn:rs-psi}
r(\psi) := 
	\frac{1}{\psi} \inf_{s > 0} \log \Expect \ntr \exp\left( \frac{\psi}{2} \big(s \, \mtx{V}_{\mtx{X}} + s^{-1} \, \mtx{V}^{\mtx{K}} \big)\right).
\end{equation}
When $\abs{\theta} < \sqrt{\psi}$,
$$
\log \Expect \ntr \econst^{\theta \mtx{X}}
	\leq \frac{\psi \, r(\psi)}{2} \log \left( \frac{1}{1-\theta^2/\psi} \right)
	\leq \frac{r(\psi) \, \theta^2}{2 (1 - \theta^2/\psi)}.
$$
The conditional variances $\mtx{V}_{\mtx{X}}$ and $\mtx{V}^{\mtx{K}}$ are defined
in~\eqref{eqn:conditional-variance} and \eqref{eqn:K-conditional-variance}.
\end{thm}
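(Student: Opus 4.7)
The plan is to mirror the polynomial moment proof of \thmref{bdg-inequality}, but to pivot from polynomial to exponential trace moments by deriving a differential inequality for the trace moment generating function $m(\theta) := \Expect\ntr\econst^{\theta\mtx{X}}$ and integrating it to obtain the claimed logarithmic bound. Throughout, boundedness of $\norm{\mtx{X}}$ ensures all the regularity needed to exchange differentiation with the expectation and to justify conditioning.

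First, I would differentiate $m$ under the expectation and apply the method of exchangeable pairs (\lemref{exchange}) with $\mtx{F}(\mtx{X}) = \econst^{\theta\mtx{X}}$, yielding
$$
m'(\theta) \;=\; \Expect\ntr\bigl[\mtx{X}\,\econst^{\theta\mtx{X}}\bigr] \;=\; \tfrac{1}{2}\Expect\ntr\bigl[\mtx{K}(Z,Z')\bigl(\econst^{\theta\mtx{X}}-\econst^{\theta\mtx{X}'}\bigr)\bigr].
$$
The regularity hypothesis~\eqref{eqn:regularity-mep} holds because $\mtx{X}$ is bounded and $\mtx{K}(Z,Z')$ has finite second moment via~\eqref{eqn:var-kernel-bd}.

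The crux of the argument is an exponential analogue of \lemref{pmvti}, bounding $|\trace[\mtx{C}(\econst^{\theta\mtx{A}}-\econst^{\theta\mtx{B}})]|$ by a constant multiple of $|\theta|\,\trace\bigl[(s(\mtx{A}-\mtx{B})^2+s^{-1}\mtx{C}^2)(\econst^{\theta\mtx{A}}+\econst^{\theta\mtx{B}})\bigr]$ for Hermitian $\mtx{A},\mtx{B},\mtx{C}$ and $s>0$. I would derive this starting from the integral identity $\econst^{\theta\mtx{A}}-\econst^{\theta\mtx{B}} = \theta\int_0^1 \econst^{(1-t)\theta\mtx{A}}(\mtx{A}-\mtx{B})\econst^{t\theta\mtx{B}}\,dt$, using cyclicity of the trace together with operator Cauchy--Schwarz and AM--GM estimates (as in~\eqref{eqn:matrix-am-gm}) to perform the $s,s^{-1}$ split, followed by a Lieb--Thirring-type bound to replace the interpolated exponential $\econst^{(1-t)\theta\mtx{A}}\econst^{t\theta\mtx{B}}$ under the trace by a symmetric combination of $\econst^{\theta\mtx{A}}$ and $\econst^{\theta\mtx{B}}$. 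Substituting $(\mtx{A},\mtx{B},\mtx{C}) = (\mtx{X},\mtx{X}',\mtx{K}(Z,Z'))$, symmetrizing via the exchangeability of $(\mtx{X},\mtx{X}')$, and invoking the defining identities \eqref{eqn:conditional-variance}--\eqref{eqn:K-conditional-variance} of $\mtx{V}_{\mtx{X}}$ and $\mtx{V}^{\mtx{K}}$ after conditioning on $Z$ then yields, for $\theta>0$ and any $s>0$,
$$
m'(\theta) \;\leq\; \theta\,\Expect\ntr\!\left[\tfrac{1}{2}\bigl(s\,\mtx{V}_{\mtx{X}} + s^{-1}\,\mtx{V}^{\mtx{K}}\bigr)\econst^{\theta\mtx{X}}\right].
$$

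With $\mtx{U}:=\tfrac{1}{2}(s\,\mtx{V}_{\mtx{X}} + s^{-1}\,\mtx{V}^{\mtx{K}})$, the remaining step is to separate $\mtx{U}$ from $\econst^{\theta\mtx{X}}$ using a matrix Young/Peierls--Bogoliubov trace inequality together with H\"older's inequality for the expectation. This converts $\Expect\ntr[\mtx{U}\,\econst^{\theta\mtx{X}}]$ into a factor involving $\Expect\ntr\exp(\psi\mtx{U})$ and a factor involving $m(\theta)$, modulo a $\theta$-dependent correction; infimizing over $s$ on the variance-proxy side turns $\log\Expect\ntr\exp(\psi\mtx{U})$ into $\psi\,r(\psi)$ via the definition~\eqref{eqn:rs-psi}, and a careful choice of the Young parameter produces the target differential inequality
$$
\frac{m'(\theta)}{m(\theta)} \;\leq\; \frac{r(\psi)\,\theta}{1-\theta^2/\psi}\qquad\text{for }0\leq\theta<\sqrt{\psi}.
$$
Integrating from $0$ to $\theta$ using $m(0)=1$ and the antiderivative $\int t\,dt/(1-t^2/\psi) = -\tfrac{\psi}{2}\log(1-t^2/\psi)$ gives the first inequality of the theorem. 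The case $\theta<0$ follows by replacing $\mtx{X}$ with $-\mtx{X}$ (which leaves $\mtx{V}_{\mtx{X}}$ and $\mtx{V}^{\mtx{K}}$ unchanged), and the second inequality is the elementary bound $-\log(1-x)\leq x/(1-x)$ on $[0,1)$.

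The main technical obstacle is the exponential mean-value trace inequality, whose non-commutative integral representation must be dissected carefully to land on the clean $s\,(\mtx{A}-\mtx{B})^2+s^{-1}\mtx{C}^2$ split paired against $\econst^{\theta\mtx{A}}+\econst^{\theta\mtx{B}}$ without leaking constants. A secondary but still delicate step is the Young/H\"older factorization, which must convert the multiplicative coupling of $\mtx{U}$ and $\econst^{\theta\mtx{X}}$ into a product of $\Expect\ntr\econst^{\psi\mtx{U}}$ and $m(\theta)$ while accumulating exactly the $(1-\theta^2/\psi)^{-1}$ correction responsible for the singularity at $\theta=\sqrt{\psi}$.
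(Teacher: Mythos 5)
Your proposal tracks the paper's argument almost exactly: apply the method of exchangeable pairs to $m'(\theta)$, invoke the exponential mean-value trace inequality (which the paper proves via the integral representation of $\econst^{\mtx{A}}-\econst^{\mtx{B}}$, operator Cauchy--Schwarz, and a commuting-operator Young inequality rather than a Lieb--Thirring bound), decouple the variance proxy from $\econst^{\theta\mtx{X}}$, and integrate the resulting differential inequality. The one substantive imprecision is in the decoupling step: the paper does not use a multiplicative H\"older split but rather the additive Young inequality for matrix entropy, $\Expect\ntr(\mtx{UW}) \leq \log\Expect\ntr\econst^{\mtx{U}} + \Expect\ntr[\mtx{W}\log\mtx{W}]$ with $\mtx{W}=\econst^{\theta\mtx{X}}/m(\theta)$, and the crucial observation is that $\log\mtx{W}\psdle\theta\mtx{X}$ turns the entropy term into $\theta\,m'(\theta)/m(\theta)$, which is exactly what makes the differential inequality close.
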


\noindent
The rest of this section is devoted to establishing this result.
The pattern of argument is similar with the proofs
of~\cite[Thm.~3.13]{Cha08:Concentration-Inequalities}
and~\cite[Thm.~5.1]{MackeyJoChFaTr12}, but we require
another nontrivial new matrix inequality.

Theorem~\ref{thm:concentration-unbdd} has a variety of
consequences.  In Section~\ref{sec:exp-es-proof},
we use it to derive the exponential Efron--Stein inequality, Theorem~\ref{thm:mxEfronStein}.
Additional applications of the result appear in Section~\ref{sec:beyond}.

\begin{rem}[Regularity Assumptions]
Theorem~\ref{thm:concentration-unbdd} instates a boundedness
assumption on $\mtx{X}$ to avoid some technical issues.
In fact, the result holds under weaker conditions.
\end{rem}

\subsection{Proof of Exponential Efron--Stein Inequality}
\label{sec:exp-es-proof}

Theorem~\ref{thm:concentration-unbdd} is the last major step toward
the matrix exponential Efron--Stein inequality, Theorem~\ref{thm:mxEfronStein}.
The proof is similar to the argument in Section~\ref{sec:poly-es-proof}
leading up to the polynomial Efron--Stein inequality so we proceed quickly.

Recall the setup from Section~\ref{sec:setup}.  We rely on the kernel
Stein pair $(\mtx{X}, \mtx{X}')$ that we constructed in Section~\ref{sec:my-kernel},
as well as the analysis from Section~\ref{eqn:my-evol}.
From Theorem~\ref{thm:concentration-unbdd} we obtain that for any $s>0$,
$$
\log \Expect \ntr \econst^{\theta \mtx{X}} \leq \frac{\theta^2/\psi}{2(1-\theta^2 / \psi)}
	\log \Expect \ntr \exp\left( \frac{s\psi}{2} \big( \mtx{V}_{\mtx{X}} + s^{-2} \,\mtx{V}^{\mtx{K}} \big) \right).
$$
Since $t \mapsto \econst^{s\psi t/2}$ is weakly increasing and convex on $\R_+$, by choosing $s$ as in \eqref{eq:sdef},
Lemma~\ref{lem:convex-conditional-variance-bound} implies that
\begin{align*}
\Expect \ntr \exp\left( \frac{s\psi}{2} \big( \mtx{V}_{\mtx{X}} + s^{-2} \,\mtx{V}^{\mtx{K}} \big) \right)
	&\leq \Expect \ntr \exp\left( \frac{s \psi}{2} \mtx{\Gamma}_0(Z) \right)\\
	&\leq \Expect \ntr \exp\left( 2 \frac{n \psi}{2} \mtx{\Gamma}_0(Z) \right)= \Expect \ntr \econst^{ 2\psi \mtx{V} }.
\end{align*}
The identity $\frac{n}{2} \mtx{\Gamma}_0(Z) = \mtx{V}$ was established in~\eqref{eq:my-Vdef}.
Combine the two displays, and make the change of variables $\psi \mapsto \psi/2$ to complete the proof of Theorem~\ref{thm:mxEfronStein}.

\subsection{The Exponential Mean Value Trace Inequality}

To establish Theorem~\ref{thm:concentration-unbdd},
we require another trace inequality.

\begin{lemma}[Exponential Mean Value Trace Inequality] \label{lem:emvti} 
For all matrices $\mtx{A}, \mtx{B}, \mtx{C} \in \Sym{d}$ and all $s > 0$ it holds that
\begin{align*}
\abs{\ntr \left[\mtx{C} (\econst^{\mtx{A}} - \econst^{\mtx{B}} )\right]} \leq  
	\frac{1}{4} \ntr\big[\big(s \, (\mtx{A}-\mtx{B})^2+ s^{-1} \, \mtx{C}^2\big)
	\big(\econst^{\mtx{A}}+\econst^{\mtx{B}}\big) \big] .
\end{align*}
\end{lemma}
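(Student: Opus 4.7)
The plan is to adapt the integral-representation technique standard in matrix analysis. First, I would invoke the Duhamel-style identity
\[
\econst^{\mtx{A}}-\econst^{\mtx{B}} = \int_0^1 \econst^{\tau \mtx{A}}(\mtx{A}-\mtx{B})\econst^{(1-\tau)\mtx{B}}\,d\tau,
\]
obtained by differentiating $\tau \mapsto \econst^{\tau \mtx{A}}\econst^{(1-\tau)\mtx{B}}$ and applying the fundamental theorem of calculus. Multiplying by $\mtx{C}$ and taking the normalized trace reduces the problem to estimating the single integrand $\ntr\bigl[\mtx{C}\econst^{\tau \mtx{A}}(\mtx{A}-\mtx{B})\econst^{(1-\tau)\mtx{B}}\bigr]$.

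Next I would split each exponential symmetrically, $\econst^{\tau \mtx{A}}=\econst^{\tau \mtx{A}/2}\econst^{\tau \mtx{A}/2}$, and use the cyclic property of the trace to rewrite the integrand as $\ntr[\mtx{Y}\mtx{X}]$ with $\mtx{Y}:=\econst^{\tau \mtx{A}/2}(\mtx{A}-\mtx{B})\econst^{(1-\tau)\mtx{B}/2}$ and $\mtx{X}:=\econst^{(1-\tau)\mtx{B}/2}\mtx{C}\econst^{\tau \mtx{A}/2}$. Cauchy--Schwarz in the Hilbert--Schmidt inner product followed by Young's inequality with parameter $s>0$ gives $\abs{\ntr[\mtx{Y}\mtx{X}]} \le \tfrac{s}{2}\ntr[\mtx{Y}^{\adj}\mtx{Y}] + \tfrac{1}{2s}\ntr[\mtx{X}^{\adj}\mtx{X}]$, and cyclicity converts each term into the form $\ntr\bigl[\econst^{(1-\tau)\mtx{B}}\mtx{M}\econst^{\tau \mtx{A}}\mtx{M}\bigr]$ with $\mtx{M}\in\{\mtx{A}-\mtx{B},\,\mtx{C}\}$.

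The crux is to prove the key integral identity
\[
\int_0^1 \ntr\bigl[\econst^{(1-\tau)\mtx{B}}\mtx{M}\econst^{\tau \mtx{A}}\mtx{M}\bigr]\,d\tau \le \tfrac{1}{2}\ntr\bigl[\mtx{M}^2(\econst^{\mtx{A}}+\econst^{\mtx{B}})\bigr]
\]
for any Hermitian $\mtx{M}$. Writing $f(\tau):=\fnormsq{\econst^{(1-\tau)\mtx{B}/2}\mtx{M}\econst^{\tau \mtx{A}/2}}$, the integrand equals $f(\tau)/d$, while $f(0)=\trace[\mtx{M}^2\econst^{\mtx{B}}]$ and $f(1)=\trace[\mtx{M}^2\econst^{\mtx{A}}]$. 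The matrix-valued function $z\mapsto \econst^{(1-z)\mtx{B}/2}\mtx{M}\econst^{z\mtx{A}/2}$ is entire, and a Hadamard three-lines argument, applied to the scalar pairings $\ip{\vct{u}}{\econst^{(1-z)\mtx{B}/2}\mtx{M}\econst^{z\mtx{A}/2}\vct{v}}$ on the strip $\real z\in[0,1]$, shows that $\log f$ is convex on $[0,1]$. Since a convex $f$ on $[0,1]$ satisfies $f(\tau)\le(1-\tau)f(0)+\tau f(1)$, integration gives $\int_0^1 f\,d\tau\le\tfrac{1}{2}(f(0)+f(1))$, which is precisely the claim.

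Substituting this estimate with $\mtx{M}=\mtx{A}-\mtx{B}$ and $\mtx{M}=\mtx{C}$ into the Cauchy--Schwarz bound and integrating over $\tau\in[0,1]$ yields the desired inequality. The main obstacle is the convexity of $f$: although three-lines-type arguments are well-established in matrix analysis, the non-commutativity of $\mtx{A}$ and $\mtx{B}$ means one cannot simply invoke Young's inequality $p^{1-\tau}q^\tau\le(1-\tau)p+\tau q$ on eigenvalues, and the analytic extension must be handled carefully via the scalar pairings above. I note that a Taylor-series approach via \lemref{pmvti} does \emph{not} work directly, since summing the polynomial bound produces $\econst^{\abs{\mtx{A}}}+\econst^{\abs{\mtx{B}}}$ rather than $\econst^{\mtx{A}}+\econst^{\mtx{B}}$, motivating the integral-representation route.
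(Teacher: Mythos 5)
Your proof is correct, and the overall skeleton matches the paper's: both begin with the Duhamel identity
$\econst^{\mtx{A}}-\econst^{\mtx{B}} = \int_0^1 \econst^{\tau\mtx{A}}(\mtx{A}-\mtx{B})\econst^{(1-\tau)\mtx{B}}\,d\tau$,
both pass through a Cauchy--Schwarz/Young step to split off $\mtx{C}$ from $\mtx{A}-\mtx{B}$, and both rely on the scalar bound $f(\tau)\le(1-\tau)f(0)+\tau f(1)$ for $f(\tau)=\trace[\econst^{\tau\mtx{A}}\mtx{M}\econst^{(1-\tau)\mtx{B}}\mtx{M}]$.
The genuine difference is in how that central mean inequality is established. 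The paper proves it (Lemma~\ref{lem:young-commute} paired with Lemma~\ref{lem:operator-cs}) via Young's inequality for the commuting left-multiplication and right-multiplication \emph{superoperators} $\mathcal{A}_{\tau}$ and $\mathcal{B}_{1-\tau}$ acting on $\M^d$: since they commute, one simply diagonalizes simultaneously and applies the scalar Young inequality to the eigenvalues, giving $\mathcal{A}_{\tau}\mathcal{B}_{1-\tau}\psdle \tau\,\mathcal{A}_1 + (1-\tau)\,\mathcal{B}_1$ pointwise in $\tau$, then integrates. You instead prove the stronger log-convexity $f(\tau)\le f(0)^{1-\tau}f(1)^{\tau}$ via Hadamard three-lines applied to the entire matrix family $g(z)=\econst^{(1-z)\mtx{B}/2}\mtx{M}\econst^{z\mtx{A}/2}$, exploiting the observation that $\fnorm{g(\sigma+it)}$ is independent of $t$ because the imaginary parts of the exponents contribute unitaries. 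Both are valid; the paper's route is more elementary and self-contained (no complex analysis), while yours delivers a slightly stronger intermediate inequality (log-convexity rather than convexity) that is then weakened back via AM--GM. Your closing remark --- that summing the polynomial Lemma~\ref{lem:pmvti} over a Taylor series would only yield $\econst^{\abs{\mtx{A}}}+\econst^{\abs{\mtx{B}}}$ rather than $\econst^{\mtx{A}}+\econst^{\mtx{B}}$ --- is accurate and well observed.

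One small imprecision worth fixing: to deduce Frobenius-norm log-convexity from three-lines, you should pair $g(z)$ against a fixed matrix $\mtx{P}$ via the Hilbert--Schmidt functional $z\mapsto\trace[\mtx{P}^{\adj}g(z)]$ and take the supremum over $\fnorm{\mtx{P}}\le 1$; pairing against \emph{vectors} $\ip{\vct{u}}{g(z)\vct{v}}$ as written controls only the \emph{operator} norm of $g(\tau)$, which is not the quantity $f(\tau)=\fnormsq{g(\tau)}$ you need. The bound on each boundary line is then $\abs{\trace[\mtx{P}^{\adj}g(j+it)]}\le\fnorm{\mtx{P}}\fnorm{g(j+it)}=\fnorm{\mtx{P}}\sqrt{f(j)}$, which is $t$-independent, and the three-lines conclusion follows. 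With that adjustment the argument is complete.
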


\noindent
We defer the proof to Appendix~\ref{sec:emvti}.

\subsection{Some Properties of the Trace Mgf}

For the proof, we need to develop some basic facts about
the trace moment generating function.

\begin{lemma}[Properties of the Trace Mgf]
Assume that $\mtx{X} \in \Sym{d}$ is a centered
random matrix that is bounded in norm.
Define the normalized trace mgf $m(\theta) = \Expect \ntr \econst^{\theta \mtx{X}}$
for $\theta \in \R$.
Then
\begin{equation} \label{eqn:logm-nonneg}
\log m(\theta) \geq 0
\quad\text{and}\quad
\log m(0) = 0.
\end{equation}
The derivative of the trace mgf satisfies
\begin{equation} \label{eqn:mprime}
m'(\theta) = \Expect \ntr \big[ \mtx{X} \econst^{\theta \mtx{X}} \big]
\quad\text{and}\quad
m'(0) = 0.
\end{equation}
The trace mgf is a convex function; in particular
\begin{equation} \label{eqn:mprime-sign}
m'(\theta) \leq 0 \quad\text{for $\theta \leq 0$}
\quad\text{and}\quad
m'(\theta) \geq 0 \quad\text{for $\theta \geq 0$.}
\end{equation}
\end{lemma}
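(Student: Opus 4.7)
The plan is to verify the three displayed claims in order, leaning on standard facts about the matrix exponential together with the assumed boundedness of $\norm{\mtx{X}}$ (which lets us freely differentiate under the expectation).

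For~\eqref{eqn:logm-nonneg}, the equality $\log m(0)=0$ is immediate because $\econst^{0 \cdot \mtx{X}} = \Id$ and $\ntr \Id = 1$. For the inequality $\log m(\theta) \geq 0$, I would invoke the well-known convexity of the map $\mtx{A} \mapsto \trace \econst^{\mtx{A}}$ on $\Sym{d}$, hence of $\mtx{A} \mapsto \ntr \econst^{\mtx{A}}$. Applying Jensen's inequality (for the matrix-valued random variable $\theta \mtx{X}$) together with the centering assumption $\Expect \mtx{X} = \mtx{0}$ yields
$$
m(\theta) = \Expect \ntr \econst^{\theta \mtx{X}} \geq \ntr \econst^{\theta \Expect \mtx{X}} = \ntr \Id = 1,
$$
from which $\log m(\theta) \geq 0$ follows.

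For~\eqref{eqn:mprime}, since $\theta \mtx{X}$ commutes with $\mtx{X}$, the derivative of $\econst^{\theta \mtx{X}}$ in $\theta$ is simply $\mtx{X} \econst^{\theta \mtx{X}}$. The boundedness of $\norm{\mtx{X}}$ gives a uniform envelope of the form $\norm{\mtx{X} \econst^{\theta \mtx{X}}} \leq R \econst^{\abs{\theta} R}$ on any compact set of $\theta$, which justifies differentiating under both the expectation and the trace (dominated convergence). Evaluating at $\theta = 0$ then gives $m'(0) = \Expect \ntr \mtx{X} = \ntr \Expect \mtx{X} = 0$.

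For the convexity of $m$ and the sign conclusion~\eqref{eqn:mprime-sign}, I would argue pointwise: for each realization of $\mtx{X}$ with real eigenvalues $\lambda_1,\dots,\lambda_d$, the function
$$
\theta \;\longmapsto\; \ntr \econst^{\theta \mtx{X}} = \frac{1}{d}\sum_{k=1}^d \econst^{\theta \lambda_k}
$$
is a nonnegative combination of convex scalar exponentials and therefore convex in $\theta$. Taking expectations preserves convexity, so $m$ is convex on $\R$. Any convex function has a nondecreasing derivative, and combined with $m'(0)=0$ from the previous step this forces $m'(\theta)\leq 0$ on $(-\infty,0]$ and $m'(\theta)\geq 0$ on $[0,\infty)$. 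There is no real obstacle here; the only care needed is the routine justification of the interchange of derivative, trace, and expectation, and that is supplied by the norm bound on $\mtx{X}$.
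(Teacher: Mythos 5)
Your proof is correct, and the first two parts (the Jensen argument for $\log m(\theta)\ge 0$ and the dominated-convergence justification of $m'(\theta)=\Expect\ntr[\mtx{X}\econst^{\theta\mtx{X}}]$ with $m'(0)=0$) follow the paper's proof essentially verbatim. Where you diverge is the convexity step: the paper differentiates a second time and observes that $m''(\theta)=\Expect\ntr[\mtx{X}^2\econst^{\theta\mtx{X}}]\ge 0$ because the trace of a product of two positive-semidefinite matrices is nonnegative, whereas you argue pointwise via the spectral decomposition, writing $\ntr\econst^{\theta\mtx{X}}=\tfrac{1}{d}\sum_k\econst^{\theta\lambda_k}$ for each realization and noting that this is a nonnegative combination of convex scalar exponentials, so convexity is preserved after taking expectations. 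Your route is slightly more economical because it avoids justifying a second interchange of derivative, trace, and expectation; the paper's route is equally short and has the small advantage of exhibiting the explicit formula for $m''$, which foreshadows the style of computation used elsewhere (e.g.\ in the proof of Lemma~\ref{lem:mgf-derivative}). Both reach the same conclusion: $m$ is convex, $m'$ is nondecreasing, and together with $m'(0)=0$ this forces the sign pattern in~\eqref{eqn:mprime-sign}.
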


\begin{proof}
The result $m(0) = 0$ follows immediately from the definition of the trace mgf.
Since $\Expect \mtx{X} = \mtx{0}$,
$$
\log m(\theta) = \log \Expect \ntr \econst^{\theta \mtx{X}}
	\geq \log \ntr \econst^{\theta \Expect \mtx{X}}
	\geq 0.
$$
The first inequality is Jensen's, which depends on the fact~\eqref{eqn:trace-monotone}
that the trace exponential is a convex function.

Next, consider the derivative of the trace mgf.  For each $\theta \in \R$,
\begin{equation} \label{eqn:m-prime-v1}
m'(\theta)
	= \Expect \ntr \left[ \ddt{\theta} \, \econst^{\theta \mtx{X}} \right]
	= \Expect \ntr \big[ \mtx{X} \econst^{\theta \mtx{X}} \big],
\end{equation}
where the dominated convergence theorem and the boundedness of $\mtx{X}$
justify the exchange of expectation and derivative.
The claim $m'(0) = 0$ follows from~\eqref{eqn:m-prime-v1}
and the fact that $\Expect \mtx{X} = \mtx{0}$.

Similarly, the second derivative of the trace mgf satisfies
$$
m''(\theta) = \Expect \ntr \big[ \mtx{X}^2 \, \econst^{\theta\mtx{X}} \big]
	\geq 0
	\quad\text{for each $\theta \in \R$.}
$$
The inequality holds because $\mtx{X}^2$ and $\econst^{\theta \mtx{X}}$ are both positive semidefinite,
so the trace of their product must be nonnegative.  We discover that the trace
mgf is convex, which means that the derivative $m'$ is an increasing function.
\end{proof}

\subsection{Bounding the Derivative of the Trace Mgf} \label{sec:d-trace-mgf}

The first step in the proof of Theorem~\ref{thm:concentration-unbdd}
is to bound the trace mgf of the random matrix $\mtx{X}$ in terms of
the two conditional variance measures.

\begin{lemma}[The Derivative of the Trace Mgf] \label{lem:mgf-derivative}
Instate the notation and hypotheses of Theorem~\ref{thm:concentration-unbdd}.
Define the normalized trace mgf
$m(\theta) \defby \Expect \ntr \econst^{\theta \mtx{X}}$.
Then
\begin{align}
\abs{m'(\theta)}
	&\leq  \frac{1}{2} \abs{\theta} \cdot \inf_{s > 0}\ \Expect \ntr\big[\big(s\mtx{V}_{\mtx{X}} + s^{-1} \mtx{V}^{\mtx{K}} \big) \, \econst^{\theta \mtx{X}} \big]
	\quad\text{for all $\theta \in \R$.} \label{eqn:m-prime-Delta} 
\end{align}
\end{lemma}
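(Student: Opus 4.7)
\textbf{Proof plan for Lemma \ref{lem:mgf-derivative}.} The plan is to combine the method of exchangeable pairs (Lemma~\ref{lem:exchange}) with the exponential mean value trace inequality (Lemma~\ref{lem:emvti}) and then optimize in the free parameter. Starting from the identity $m'(\theta) = \Expect \ntr[\mtx{X} \econst^{\theta \mtx{X}}]$ that was just established, I would apply Lemma~\ref{lem:exchange} with $\mtx{F}(\mtx{X}) = \econst^{\theta \mtx{X}}$. The regularity hypothesis $\Expect\norm{\mtx{K}(Z,Z') \econst^{\theta\mtx{X}}} < \infty$ holds because $\norm{\mtx{X}}$ is bounded (so $\econst^{\theta\mtx{X}}$ is bounded in norm) and the integrability of $\mtx{K}(Z,Z')$ is already built into the definition of a kernel Stein pair. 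After commuting $\ntr$ past $\Expect$, this yields
$$
m'(\theta) = \tfrac{1}{2} \Expect \ntr\bigl[ \mtx{K}(Z,Z')\,\bigl(\econst^{\theta \mtx{X}} - \econst^{\theta \mtx{X}'}\bigr) \bigr].
$$

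Next, I would bring the absolute value inside the expectation by Jensen's inequality and apply Lemma~\ref{lem:emvti} pointwise with $\mtx{A} = \theta\mtx{X}$, $\mtx{B} = \theta\mtx{X}'$, and $\mtx{C} = \mtx{K}(Z,Z')$. This produces
$$
\abs{m'(\theta)} \leq \tfrac{1}{8} \Expect \ntr\bigl[\bigl(s\theta^2 (\mtx{X} - \mtx{X}')^2 + s^{-1} \mtx{K}(Z,Z')^2 \bigr)\bigl(\econst^{\theta \mtx{X}} + \econst^{\theta \mtx{X}'}\bigr)\bigr].
$$
Now I would invoke exchangeability: since $(\mtx{X},\mtx{X}') \eqdist (\mtx{X}',\mtx{X})$ and $\mtx{K}(Z,Z')^2 = \mtx{K}(Z',Z)^2$ by the antisymmetry \eqref{eqn:K-antisymmetry}, the two terms produced by expanding $\econst^{\theta\mtx{X}} + \econst^{\theta\mtx{X}'}$ have equal expectation, collapsing the factor $\tfrac{1}{8}$ to $\tfrac{1}{4}$ and leaving a single exponential $\econst^{\theta\mtx{X}}$. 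Since this exponential is $Z$-measurable while the remaining factors are not, the tower property lets me pull the conditional expectation through and introduce the conditional variances: $\Expect[(\mtx{X} - \mtx{X}')^2 \condl Z] = 2\mtx{V}_{\mtx{X}}$ and $\Expect[\mtx{K}(Z,Z')^2 \condl Z] = 2\mtx{V}^{\mtx{K}}$. The bound becomes
$$
\abs{m'(\theta)} \leq \tfrac{1}{2} \Expect \ntr\bigl[\bigl(s\theta^2 \mtx{V}_{\mtx{X}} + s^{-1} \mtx{V}^{\mtx{K}} \bigr)\econst^{\theta \mtx{X}}\bigr].
$$

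Finally, since $s > 0$ is arbitrary, I would take the infimum and perform the change of variables $s \mapsto s/\abs{\theta}$ (for $\theta \neq 0$) to pull the $\abs{\theta}$ out front and put the two conditional variance terms on equal footing, yielding exactly \eqref{eqn:m-prime-Delta}. The case $\theta = 0$ is immediate from $m'(0) = 0$. The main technical input is Lemma~\ref{lem:emvti}, whose proof is deferred to the appendix; conditional on that inequality, the only subtlety in the present argument is keeping careful track of factors so the parameter $s$ can be rescaled into the desired symmetric form---everything else is a routine application of exchangeability and the tower property.
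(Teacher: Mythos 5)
Your proposal is correct and follows the same route as the paper's proof: apply Lemma~\ref{lem:exchange} with $\mtx{F}(\mtx{X}) = \econst^{\theta\mtx{X}}$, bound the resulting trace via the exponential mean value trace inequality (Lemma~\ref{lem:emvti}), collapse the two exponential terms by exchangeability, pull in the conditional expectations to introduce $\mtx{V}_{\mtx{X}}$ and $\mtx{V}^{\mtx{K}}$, and rescale $s$ by $\abs{\theta}$. The only difference from the paper is that you apply the tower property before the change of variables rather than after, which is purely cosmetic.
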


\begin{proof}
Assume that the kernel Stein pair $(\mtx{X}, \mtx{X}')$ is constructed
from an auxiliary exchangable pair $(Z, Z')$.
By~\eqref{eqn:mprime}, the result holds trivially for $\theta = 0$,
so we may assume that $\theta \neq 0$.
The form of the derivative~\eqref{eqn:mprime}
is suitable for an application of the method of exchangeable pairs, Lemma~\ref{lem:exchange}.
Since $\mtx{X}$ is bounded, the regularity condition~\eqref{eqn:regularity-mep} is satisfied, and we obtain
\begin{equation} \label{eqn:m-prime-v2}
m'(\theta) = \frac{1}{2}
	\Expect \ntr \big[ \mtx{K}(Z,Z') \big(\econst^{\theta \mtx{X}} - \econst^{\theta \mtx{X}'} \big) \big].
\end{equation}
The exponential mean value trace inequality, Lemma~\ref{lem:emvti}, implies that
\begin{align*}
|m'(\theta)|
&\leq \frac{1}{8} \cdot \inf_{s > 0}\ \Expect \ntr \big[
	\big(s \, (\theta\mtx{X}-\theta\mtx{X}')^2+s^{-1}\mtx{K}(Z,Z')^2 \big) \cdot \big(\econst^{\theta \mtx{X}} + \econst^{\theta \mtx{X}'} \big) \big] \\
&= \frac{1}{4} \cdot \inf_{s > 0}\ \Expect \ntr \big[
	\big(s \, (\theta\mtx{X}-\theta\mtx{X}')^2+s^{-1}\mtx{K}(Z,Z')^2 \big) \cdot \econst^{\theta \mtx{X}} \big] \\
&= \frac{1}{4} \abs{\theta} \cdot \inf_{t > 0}\ \Expect \ntr \big[
	\big(t \, (\mtx{X}-\mtx{X}')^2+t^{-1} \mtx{K}(Z,Z')^2 \big) \cdot \econst^{\theta \mtx{X}} \big] \\
&=  \frac{1}{2} \abs{\theta} \cdot \inf_{t > 0}\ \Expect \ntr \left[
	\frac{t}{2} \Expect\big[ (\mtx{X} - \mtx{X}')^2 \bcondl Z\big]
	\cdot \econst^{\theta \mtx{X}}  
	+ \frac{1}{2t} \Expect\big[ \mtx{K}(Z,Z')^2\bcondl Z\big] \cdot \econst^{\theta \mtx{X}}  \right]. 
\end{align*}
The first equality follows from the exchangeability of $(\mtx{X},\mtx{X}')$;
the second follows from the change of variables $s = \abs{\theta}^{-1} t$; 
and the final one depends on the pull-through property of conditional expectation.
We reach the result \eqref{eqn:m-prime-Delta} by introducing the definitions
\eqref{eqn:conditional-variance} and \eqref{eqn:K-conditional-variance} of the conditional variance and the kernel conditional variance.
\end{proof}

\subsection{Decoupling via an Entropy Inequality}

The next step in the proof uses an entropy inequality to separate
the conditional variances in~\eqref{eqn:m-prime-Delta} from the
matrix exponential.

\begin{fact}[Young's Inequality for Matrix Entropy] \label{fact:young}
Let $\mtx{U}$ be a random matrix in $\Sym{d}$ that is
bounded in norm, and suppose that $\mtx{W}$
is a random matrix in $\Sym{d}_+$ that is subject to the
normalization $\Expect \ntr \mtx{W} = 1$.  Then
$$
\Expect \ntr(\mtx{UW})
	\leq \log \Expect \ntr \econst^{\mtx{U}}
	+ \Expect \ntr [ \mtx{W} \log \mtx{W} ].
$$
\end{fact}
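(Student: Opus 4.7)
My plan is to reduce this matrix/expectation inequality to Klein's inequality applied pointwise, followed by taking an expectation. Recall Klein's inequality: for any $\mtx{A} \in \Sym{d}_+$ and $\mtx{B} \in \Sym{d}$ with $\mtx{B} \psdgt \mtx{0}$,
\begin{equation*}
\trace(\mtx{A} \log \mtx{A}) - \trace(\mtx{A} \log \mtx{B}) \ge \trace \mtx{A} - \trace \mtx{B}.
\end{equation*}
This is the standard ``non-negativity of quantum relative entropy'' for unnormalized states, and it follows from the scalar tangent inequality $a(\log a - \log b) \ge a - b$ by applying the operator version of Klein's convex trace inequality to $f(x) = x\log x$.

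The plan is to apply Klein's inequality pointwise on the probability space with a carefully chosen deterministic $\mtx{B}$. Set $\alpha := \log \Expect \ntr \econst^{\mtx{U}}$, which is a finite real scalar since $\norm{\mtx{U}}$ is bounded and $\econst^{\mtx{U}} \psdgt \mtx{0}$ forces $\Expect \ntr \econst^{\mtx{U}} > 0$. Take $\mtx{A} = \mtx{W}$ and $\mtx{B} = \econst^{\mtx{U} - \alpha \Id}$, so that $\log \mtx{B} = \mtx{U} - \alpha \Id$ and $\ntr \mtx{B} = \econst^{-\alpha} \, \ntr \econst^{\mtx{U}}$. Dividing Klein's inequality by $d$ yields, pointwise,
\begin{equation*}
\ntr(\mtx{W} \log \mtx{W}) - \ntr(\mtx{W}\mtx{U}) + \alpha \, \ntr \mtx{W} \ge \ntr \mtx{W} - \econst^{-\alpha} \, \ntr \econst^{\mtx{U}}.
\end{equation*}

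Next I would take expectations on both sides. By the hypothesis $\Expect \ntr \mtx{W} = 1$ and the identity $\Expect \ntr \econst^{\mtx{U}} = \econst^{\alpha}$, the right-hand side collapses to $1 - 1 = 0$. Rearranging delivers
\begin{equation*}
\Expect \ntr(\mtx{U}\mtx{W}) \le \alpha + \Expect \ntr(\mtx{W} \log \mtx{W}) = \log \Expect \ntr \econst^{\mtx{U}} + \Expect \ntr(\mtx{W} \log \mtx{W}),
\end{equation*}
which is the stated bound.

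I do not anticipate any substantial obstacle. The one minor point of care is that $\mtx{W}$ may be singular, in which case I would interpret $\mtx{W} \log \mtx{W}$ via the convention $0 \log 0 = 0$ and either apply Klein's inequality on the range of $\mtx{W}$ or regularize by replacing $\mtx{W}$ with $\mtx{W} + \varepsilon \Id$ and letting $\varepsilon \downarrow 0$ by continuity. The boundedness of $\mtx{U}$ ensures that all the exponentials and their expectations are finite and positive, so no integrability issue arises on the matrix-exponential side.
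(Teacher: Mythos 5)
Your proof is correct. A point of comparison worth noting: the paper does not actually prove Fact~\ref{fact:young}; it simply cites \cite[Prop.~A.3]{MackeyJoChFaTr12} and \cite[Thm.~2.13]{Car10:Trace-Inequalities}. So what you have done is supply a self-contained argument where the paper relies on a reference. Your route---apply the unnormalized Klein inequality pointwise with $\mtx{A} = \mtx{W}$ and $\mtx{B} = \econst^{\mtx{U} - \alpha\Id}$, where $\alpha = \log \Expect \ntr \econst^{\mtx{U}}$ is a deterministic scalar, then take expectations so that the right-hand side $\Expect\ntr\mtx{W} - \econst^{-\alpha}\Expect\ntr\econst^{\mtx{U}}$ collapses to $1-1=0$---is tight and essentially the standard proof of the Gibbs variational principle, adapted smoothly to the ``random matrix with outer expectation'' form needed here. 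The one step a reader might want spelled out is why the pointwise application is legitimate when $\mtx{U}$ and $\mtx{W}$ are dependent random matrices on a common probability space; it is, precisely because Klein's inequality holds for each realization $(\mtx{W}(\omega), \econst^{\mtx{U}(\omega)-\alpha\Id})$ with the \emph{fixed} constant $\alpha$, and linearity of expectation does the rest. Your handling of the singular-$\mtx{W}$ case via the $0\log 0 = 0$ convention (or $\eps$-regularization) is also the right thing to say. I see no gap.
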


\noindent
This fact appears as~\cite[Prop.~A.3]{MackeyJoChFaTr12};
see also~\cite[Thm.~2.13]{Car10:Trace-Inequalities}.

\subsection{A Differential Inequality}

To continue the argument, we fix a parameter $\psi > 0$.
Rewrite~\eqref{eqn:m-prime-Delta} as
$$
\abs{m'(\theta)} \leq \frac{\abs{\theta} m(\theta)}{\psi} \inf_{s > 0}
	\Expect\ntr \left[ \left( \frac{\psi}{2} \big( s \mtx{V}_{\mtx{X}} + s^{-1} \mtx{V}^{\mtx{K}}\big) \right) \cdot
	\frac{\econst^{\theta \mtx{X}}}{m(\theta)} \right]
$$
Invoke Fact~\ref{fact:young} to obtain
\begin{multline*}
\abs{m'(\theta)} \leq \frac{\abs{\theta} m(\theta)}{\psi} \bigg( \inf_{s > 0}
	\log \Expect \ntr \exp\left( \frac{\psi}{2} \big( s \mtx{V}_{\mtx{X}} + s^{-1} \mtx{V}^{\mtx{K}} \big) \right) \\
	+ \Expect \ntr \left[ \frac{\econst^{\theta \mtx{X}}}{m(\theta)}
	\log \frac{\econst^{\theta{\mtx{X}}}}{m(\theta)} \right] \bigg).
\end{multline*}
In view of~\eqref{eqn:logm-nonneg},
$$
\log \frac{\econst^{\theta \mtx{X}}}{m(\theta)}
	= \theta \mtx{X} - \log m(\theta) \cdot \Id
	\psdle \theta \mtx{X}.
$$
Identify the function $r(\psi)$ defined in~\eqref{eqn:rs-psi}
and the derivative of the trace mgf to reach
\begin{equation} \label{eqn:unbdd-diff-ineq}
\abs{ m'(\theta) }
	\leq \abs{\theta} m(\theta) \, r(\psi) + \frac{\theta \abs{\theta}}{\psi} \cdot m'(\theta).
\end{equation}
This inequality is valid for all $\psi > 0$, and all $\theta \in \R$.

\subsection{Solving the Differential Inequality}

We begin with the case where $\theta \geq 0$.  The result~\eqref{eqn:mprime-sign}
shows that $m'(\phi) \geq 0$ for $\phi \in [0, \theta]$.  Therefore,
the differential inequality~\eqref{eqn:unbdd-diff-ineq} reads
$$
m'(\phi) \leq \phi \, m(\phi) \, r(\psi) + (\phi^2/\psi) \, m'(\phi)
\quad\text{for $\phi \in [0, \theta]$.}
$$
Rearrange this expression to isolate the log-derivative $m'(\phi)/m(\phi)$:
$$
\frac{\diff{}}{\diff{\phi}} \log m(\phi)
	\leq \frac{r(\psi) \, \phi}{1 - \phi^2/\psi}
	\quad\text{when $0 \leq \phi \leq \theta < \sqrt{\psi}$.}
$$
Recall the fact~\eqref{eqn:logm-nonneg} that $\log m(0) = 0$, and integrate to obtain
$$
\log m(\theta) = \int_0^\theta \frac{\diff{}}{\diff{\phi}} \log m(\phi) \idiff{\phi}
	\leq \int_0^\theta \frac{r(\psi) \, \phi}{1 - \phi^2/\psi} \idiff{\phi}
	= \frac{\psi\, r(\psi)}{2} \log \left( \frac{1}{1 - \theta^2/\psi} \right)
$$
when $0 \leq \theta < \sqrt{\psi}$.  Making an additional approximation, we find that
$$
\log m(\theta)
	\leq \int_0^\theta \frac{r(\psi) \, \phi}{1 - \theta^2/\psi} \idiff{\phi}
	= \frac{r(\psi) \, \theta^2}{2(1 - \theta^2 /\psi)}
$$
for the same parameter range.

Finally, we treat the case where $\theta \leq 0$.
The result~\eqref{eqn:mprime-sign} shows that $m'(\phi) \leq 0$ for $\phi \in [\theta, 0]$,
so the differential inequality~\eqref{eqn:unbdd-diff-ineq} becomes
$$
m'(\phi) \geq \phi \, m(\phi) \, r(\psi) + (\phi^2/\psi) \, m'(\phi)
\quad\text{for $\phi \in [\theta, 0]$.}
$$
The rest of the argument parallels the situation where $\theta$ is positive.

\section{Complements} \label{sec:beyond}

The tools in this paper are applicable in a wide variety of settings.  To indicate what might be possible, we briefly present two additional concentration results for random matrices arising as functions of dependent random variables.  We also indicate some prospects for future research.

\subsection{Matrix Bounded Differences without Independence} \label{sec:dobrushin}

A key strength of the method of exchangeable pairs is the fact that it also applies
to random matrices that are built from weakly dependent random variables.  This
section describes an extension of Corollary~\ref{cor:bound-diff} that holds even
when the input variables exhibit some interactions.

To quantify the amount of dependency among the variables, we use
a Dobrushin interdependence matrix~\citep{dobrushin1970prescribing}.
This concept involves a certain amount of auxiliary notation.
Given a vector $\vct{x} = (x_1, \dots, x_n)$,
we write \[\vct{x}_{-i}= (x_1, \dots x_{i-1}, x_{i+1}, \dots, x_n)\]
for the vector with its $i$th component deleted.
Let $\Zvec = (Z_1, \dots, Z_n)$ be a vector of random variables
taking values in a Polish space $\metricspace$ with sigma algebra
$\mathcal{F}$.  The symbol $\mu_i(\cdot \bcondl \Zvec_{-i})$ refers to
the distribution of $\Zvec_i$ conditional on the random vector
$\Zvec_{-i}$.  We also require the total variation distance $\tv$
between probability measures $\mu$ and $\nu$ on $(\metricspace, \mathcal{F})$:
\begin{align} \label{eqn:tv}
\tv(\nu,\mu) \defby \sup_{A\in\mathcal{F}} \abs{\nu(A) - \mu(A)}.
\end{align}
With this foundation in place, we can state the definition.

\begin{defn}[Dobrushin Interdependence Matrix] \label{def:dobrushin}
Let $\Zvec=(Z_1, \ldots,$ $Z_n)$ be a random vector taking values in a Polish space $\metricspace$.
Let $\mtx{D} \in \R^{n \times n}$ be a matrix with a zero diagonal that satisfies
the condition
\begin{align} \label{eqn:dobrushin}
\tv\big(\mu_i(\cdot\bcondl  \vct{x}_{-i}),\mu_i(\cdot\bcondl  \vct{y}_{-i}) \big)
\leq \sum\nolimits_{j=1}^n D_{ij}\II[x_j\ne y_j]
\end{align}
for each index $i$ and for all vectors $\vct{x}, \vct{y} \in \metricspace$.  Then $\mtx{D}$ is called a \term{Dobrushin interdependence matrix} for the random vector $\Zvec$.
\end{defn}

The kernel coupling method extends readily to the setting of weak dependence.
We obtain a new matrix bounded differences inequality, which is a significant
extension of Corollary~\ref{cor:bound-diff}.  This statement can be viewed as
a matrix version of Chatterjee's result~\cite[Thm.~4.3]{Cha08:Concentration-Inequalities}.

\begin{cor}[Dobrushin Matrix Bounded Differences] \label{cor:dob-bound-diff}
	Suppose that $\Zvec \defby (Z_1, \dots, Z_n)$ in a Polish space $\metricspace$ is a vector of dependent random variables with a Dobrushin interdependence matrix $\mtx{D}$
with the property that
\begin{align}\label{eqn:dobrushin-constraint}
\max\big\{ \indnorm{1}{\mtx{D}}, \ \indnorm{\infty}{\mtx{D}} \big\} < 1.
\end{align} 
Let $\mtx{H} : \metricspace \to \Sym{d}$ be a measurable function,
and let $(\mtx{A}_1, \dots, \mtx{A}_n)$
be a deterministic sequence of Hermitian matrices that satisfy
$$
(\mtx{H}(z_1,\dots, z_n) - \mtx{H}(z_1,\dots,z_j',\dots,z_n))^2 \psdle \mtx{A}_j^2
$$
where $z_k, z_k'$ range over the possible values of $Z_k$ for each $k$. Compute the 
boundedness and dependence parameters
\begin{equation*}
\sigma^2\defby \norm{ \sum\nolimits\nolimits_{j=1}^n \mtx{A}_j^2 }
\qtext{and}
b \defby \left[ 1-\frac{1}{2} \big(\indnorm{1}{\mtx{D}}+\indnorm{\infty}{\mtx{D}} \big) \right]^{-1}.
\end{equation*}
Then, for all $t\ge 0$,
\begin{equation*}
\Prob{\lambda_{\max}\left(\mtx{H}(Z)-\Expect\mtx{H}(Z)\right)\ge t}\le d\cdot \econst^{-t^2/(b\sigma^2)}.
\end{equation*}
Furthermore,
	$$
	\Expect \lambda_{\max}\left( \mtx{H}(Z) - \Expect \mtx{H}(Z) \right) \leq \sigma \sqrt{b\log d}.
	$$ 
\end{cor}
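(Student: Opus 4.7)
The strategy parallels the proof of Corollary~\ref{cor:bound-diff}, but replaces the independent coordinate-refresh exchangeable pair by a Glauber-style pair that respects the dependence structure of $Z$. First, draw $J \sim \Unif\{1,\dots,n\}$ and construct $Z'$ by resampling coordinate $J$ from the conditional law $\mu_J(\cdot \bcondl Z_{-J})$, while keeping $Z'_{-J} = Z_{-J}$; reversibility of the Gibbs transition kernel with respect to the joint law of $Z$ makes $(Z, Z')$ exchangeable. Set $\mtx{X} := \mtx{H}(Z) - \Expect \mtx{H}(Z)$ and $\mtx{X}' := \mtx{H}(Z') - \Expect\mtx{H}(Z)$, and obtain a kernel for the pair $(\mtx{X}, \mtx{X}')$ via Proposition~\ref{lem:kernel-coupling} applied to the coupling constructed below.

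Construct the kernel coupling $(Z_{(i)}, Z'_{(i)})_{i\ge 0}$ by drawing, at each step $i$, a common coordinate $J_i \sim \Unif\{1,\dots,n\}$ and maximally coupling the laws $\mu_{J_i}(\cdot\bcondl Z_{(i-1),-J_i})$ and $\mu_{J_i}(\cdot\bcondl Z'_{(i-1),-J_i})$ to produce the new $J_i$-th coordinates of both chains. The Dobrushin condition~\eqref{eqn:dobrushin} bounds the probability of disagreement at the new coordinate by $\sum_k D_{J_i,k}\II[Z_{(i-1),k}\ne Z'_{(i-1),k}]$. Writing $p_i(k) := \Prob{Z_{(i),k}\ne Z'_{(i),k}\bcondl Z,Z'}$, a uniform average over $J_i$ yields the coordinate-wise recursion $\Expect[p_{i+1}\bcondl p_i] \le \mtx{M}\, p_i$ with $\mtx{M} := \Id - n^{-1}(\Id - \mtx{D})$. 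The hypothesis~\eqref{eqn:dobrushin-constraint} forces $\mtx{M}$ to contract strictly in both the $\ell_1$ and $\ell_\infty$ operator norms, so the geometric series $\sum_i \mtx{M}^i$ converges with summable norms controlled by $b$.

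On the matrix side, a coordinate-by-coordinate hybrid telescoping combined with the operator convexity of the square yields, for each realization of $(Z_{(i)}, Z'_{(i)})$ with disagreement set $D_i = \{k : Z_{(i),k} \ne Z'_{(i),k}\}$ of size $N_i$, the pointwise semidefinite bound
\[
\big(\mtx{H}(Z_{(i)}) - \mtx{H}(Z'_{(i)})\big)^2 \psdle N_i \sum_{k\in D_i} \mtx{A}_k^2.
\]
Combining this with the operator Schwarz inequality supplies the bound on $\big(\Expect[\mtx{H}(Z_{(i)}) - \mtx{H}(Z'_{(i)}) \bcondl Z, Z']\big)^2$ required by hypothesis~\eqref{eqn:K-term-bound} of Lemma~\ref{lem:conditional-variance-bound}, with each $\Gamma_i$ a weighted combination of the $\mtx{A}_k^2$ scaled by a power of $\mtx{M}$. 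Summing the resulting series produces deterministic semidefinite estimates $\mtx{V}_{\mtx{X}} \psdle c_1 \Id$ and $\mtx{V}^{\mtx{K}} \psdle c_2 \Id$ with $\sqrt{c_1 c_2} \le b\sigma^2/2$. Invoking Theorem~\ref{thm:concentration-unbdd} (with $s$ and $\psi$ chosen via AM--GM) and Proposition~\ref{prop:gaussexp} then delivers the advertised subgaussian tail and expectation bounds.

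The main obstacle is the coordinated analysis in the third step: simultaneously tracking the scalar coupling decay $\mtx{M}^i$ and the matrix weights $\mtx{A}_k^2$, and summing them so that the final variance proxy collapses to a multiple of $\Id$ with the specific constant $b$. In particular, obtaining the arithmetic mean $\tfrac{1}{2}(\indnorm{1}{\mtx{D}}+\indnorm{\infty}{\mtx{D}})$ in $b$, rather than the maximum, requires an interpolation between the $\ell_1$ and $\ell_\infty$ geometric-series estimates, in the spirit of the Riesz--Thorin-type argument used in Chatterjee's scalar analysis~\cite{Cha08:Concentration-Inequalities}.
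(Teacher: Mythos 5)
Your high-level architecture is the right one and matches the route the paper points to (the Glauber resampling pair, a maximal-coupling kernel coupling, Dobrushin contraction of the disagreement vector via $\mtx{M} = \Id - n^{-1}(\Id - \mtx{D})$, and the matrix telescoping $(\mtx{H}(Z_{(i)}) - \mtx{H}(Z'_{(i)}))^2 \psdle N_i \sum_{k\in D_i}\mtx{A}_k^2$). The paper itself does not write out this argument but instead defers to Chatterjee's scalar Theorem~4.3 and the authors' earlier report, so what matters is whether your sketch would actually close.

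There is a genuine gap precisely where you flag uncertainty. First, Lemma~\ref{lem:conditional-variance-bound} requires the bound~\eqref{eqn:K-term-bound} to hold with \emph{scalar} coefficients $\beta_i$, but the contraction you obtain is vector-valued and coordinate-dependent ($p_i \le \mtx{M}^i p_0$); extracting a single scalar rate loses information and cannot produce the sharp constant $b$. The resolution in the scalar case is to bound the conditional variance \emph{directly} as a quadratic form in the disagreement probabilities, not to funnel it through a geometric series of uniform $\beta_i$'s. Second, and more substantively, the claim that the arithmetic mean $\tfrac12(\indnorm{1}{\mtx{D}}+\indnorm{\infty}{\mtx{D}})$ arises from ``interpolation between the $\ell_1$ and $\ell_\infty$ geometric-series estimates in the spirit of Riesz--Thorin'' is incorrect: Riesz--Thorin interpolation yields the \emph{geometric} mean $\sqrt{\indnorm{1}{\mtx{D}}\indnorm{\infty}{\mtx{D}}}$, which controls $\indnorm{2}{\mtx{D}}$, not what appears in $b$. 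The arithmetic mean comes instead from a symmetrization step: the quadratic form controlling $\mtx{V}^{\mtx{K}}$ involves $\mtx{D}$ acting in one direction and weights $\mtx{A}_k^2$ aggregated in the other, so only the symmetric part $\tfrac12(\mtx{D}+\mtx{D}^\transp)$ survives; since $\indnorm{1}{\mtx{D}^\transp}=\indnorm{\infty}{\mtx{D}}$, the $\ell_1{=}\ell_\infty$ norm of this symmetrized matrix is at most $\tfrac12(\indnorm{1}{\mtx{D}}+\indnorm{\infty}{\mtx{D}})$, and summing the resulting Neumann series $(\Id-\tfrac12(\mtx{D}+\mtx{D}^\transp))^{-1}$ produces exactly the factor $b$. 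Without identifying this symmetrization, the argument would not deliver the stated constant.
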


Observe that the bounds here are a factor of $b$ worse than the independent case outlined in Corollary~\ref{cor:bound-diff}.  The proof is similar to the proof in the scalar case in \citep{Cha08:Concentration-Inequalities}.  We refer
the reader to our earlier report~\citep{PMT13:Deriving-Matrix} for details.

\subsection{Matrix-Valued Functions of Haar Random Elements}

This section describes a concentration result for a matrix-valued function of a random element drawn uniformly from a compact group.  This corollary can be viewed as a matrix extension of~\cite[Thm.~4.6]{Cha08:Concentration-Inequalities}.

\begin{cor}[Concentration for Hermitian Functions of Haar Measures] \label{cor:concentration-haar}
	Let $Z\sim \mu$ be Haar distributed on a compact topological group $G$, and 
	let $\mtx{\Psi} : G \to \Sym{d}$ be a measurable function satisfying $\Expect{\mtx{\Psi}(Z)} = \mtx{0}$.
	Let $Y, Y_1, Y_2, \dots$ be i.i.d.~random variables in $G$ satisfying 
	\begin{align} \label{eqn:Y-property}
	Y \sim Y^{-1} \qtext{and} zYz^{-1} \sim Y \qtext{for all } z \in G.
	\end{align}
	Assume
	$$
	\norm{\mtx{\Psi}(z)} \leq R \qtext{for all} z\in G,
	$$
	and 
	$$
	S^2 = \sup_{g \in G}\norm{\Expect\big[(\mtx{\Psi}(g) - \mtx{\Psi}(Yg))^2 \big]} < \infty.
	$$
	Compute the boundedness parameter
	$$
	\sigma^2 \defby \frac{S^2}{2}\sum\nolimits_{i=0}^\infty \min\big\{1,\ 4RS^{-1} \tv(\mu_i, \mu) \big\}
	$$
	where $\mu_i$ is the distribution of the product ${Y}_i\cdots {Y}_1$.
	Then, for all $t\geq 0$, 
	$$
	\Prob{ \lambda_{\max}\left( \mtx{\Psi}(Z) \right) \geq t }
		\leq d \cdot \econst^{-t^2/(2\sigma^2)}.
	$$
	Furthermore,
	$$
	\Expect \lambda_{\max}\left( \mtx{\Psi}(Z) \right) \leq \sigma \sqrt{2\log d}.
	$$ 
\end{cor}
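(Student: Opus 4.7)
The plan is to cast the problem into the exchangeable-pair framework of Section~\ref{sec:exchange}, using $Y$ to generate a kernel Stein pair. First, I would form the exchangeable pair $(Z, Z') := (Z, YZ)$ and set $\mtx{X} := \mtx{\Psi}(Z)$, $\mtx{X}' := \mtx{\Psi}(Z')$. Left-invariance of Haar measure gives $Z' \eqdist Z$, and a Fubini calculation combining left-invariance with the hypothesis $Y \eqdist Y^{-1}$ shows $(Z, YZ) \eqdist (YZ, Z)$. Since $\Expect \mtx{\Psi}(Z) = \mtx{0}$ by assumption, $\mtx{X}$ is already centered.

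Next, I would construct an explicit kernel coupling via Markov chains driven by iid copies $Y_1, Y_2, \ldots$ of $Y$, independent of $(Z, Y)$: set $Z_{(0)} := Z$, $Z_{(0)}' := Z'$, and $Z_{(i)} := Y_i Z_{(i-1)}$, $Z_{(i)}' := Y_i Z_{(i-1)}'$ for $i \geq 1$. Using the same multiplier on both chains is exactly what delivers the independence condition~\eqref{eqn:kernel-coupling}. Proposition~\ref{lem:kernel-coupling} then supplies the kernel $\mtx{K}$; its regularity premise~\eqref{eqn:coupling-premise} follows from the finiteness of $\sigma^2$, which forces $\tv(\mu_i,\mu)$ to be summable once the $\min$ saturates the first argument.

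The crux is bounding the conditional variances. Writing $W_i := Y_i \cdots Y_1 \sim \mu_i$, the algebraic identity $Z'_{(i)} = (W_i Y W_i^{-1}) Z_{(i)}$ combined with the conjugation invariance $gYg^{-1} \eqdist Y$ lets me replicate the definition of $S^2$ at every step along the chain: $\Expect\big[(\mtx{\Psi}(Z_{(i)}) - \mtx{\Psi}(Z'_{(i)}))^2 \bcondl Z\big] \psdle S^2 \Id$. In particular, $\mtx{V}_{\mtx{X}} \psdle \tfrac12 S^2 \Id$. Each summand $\mtx{Y}_i := \Expect\big[\mtx{\Psi}(Z_{(i)}) - \mtx{\Psi}(Z'_{(i)}) \bcondl Z, Z'\big]$ of the kernel admits two complementary bounds: the operator Jensen inequality combined with the preceding display gives $\mtx{Y}_i^2 \psdle S^2 \Id$, while the uniform bound $\norm{\mtx{\Psi}} \leq R$ together with a standard total-variation coupling estimate gives $\norm{\mtx{Y}_i} \leq 4R\tv(\mu_i,\mu)$. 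Combining these yields $\mtx{Y}_i^2 \psdle \beta_i^2 \Id$ with $\beta_i^2 := S^2 \min\{1,\ 4RS^{-1}\tv(\mu_i,\mu)\}$, whose sum is $2\sigma^2$.

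To finish, I would apply a weighted operator Cauchy--Schwarz argument (in the spirit of the proof of Lemma~\ref{lem:conditional-variance-bound}), choosing weights tuned to the coefficients $\beta_i$, to convert the summand bounds into a deterministic estimate $\mtx{V}^{\mtx{K}} \psdle v \Id$ compatible with $\sigma^2$. Feeding $\mtx{V}_{\mtx{X}} \psdle \tfrac12 S^2 \Id$ and $\mtx{V}^{\mtx{K}} \psdle v \Id$ into Theorem~\ref{thm:concentration-unbdd}, optimizing over $s$, and sending $\psi \to \infty$ produces the sub-Gaussian estimate $\log \Expect \ntr \econst^{\theta \mtx{X}} \leq \sigma^2 \theta^2/2$, and Proposition~\ref{prop:gaussexp} (with $c = 0$) then delivers the stated tail and expectation inequalities. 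The main obstacle is the final aggregation: a direct application of Lemma~\ref{lem:conditional-variance-bound} with $\mtx{\Gamma}_i = \Id$ yields $\big(\sum_i \beta_i\big)^2$, which is strictly larger than the target $\sum_i \beta_i^2 = 2\sigma^2$. Bridging this gap requires carefully exploiting both estimates on $\mtx{Y}_i^2$ simultaneously inside the double sum defining $\mtx{V}^{\mtx{K}}$, rather than separating the $S^2$ bound from the $\tv$ bound before aggregating.
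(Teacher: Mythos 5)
The paper itself does not give a proof of this corollary---it defers to the external report \citep{PMT13:Deriving-Matrix}---so there is no in-paper argument to compare against. Judged on its own merits, your construction is correct up to the final aggregation step: the exchangeable pair $(Z, YZ)$, the coupled chains $Z_{(i)} = Y_i Z_{(i-1)}$ and $Z'_{(i)} = Y_i Z'_{(i-1)}$ driven by shared multipliers, the conjugation identity $Z'_{(i)} = (W_i Y W_i^{-1}) Z_{(i)}$, and the two complementary bounds
$\Expect[\mtx{Y}_i^2 \condl Z] \psdle S^2 \Id$ (operator Jensen plus the conjugation hypothesis) and
$\norm{\mtx{Y}_i} \leq 4R\,\tv(\mu_i,\mu)$ (total-variation coupling plus right-invariance of Haar measure)
are all sound, and give $\Expect[\mtx{Y}_i^2 \condl Z] \psdle \beta_i^2\,\Id$ with $\beta_i^2 = S^2\min\{1,\,4RS^{-1}\tv(\mu_i,\mu)\}$ and $\sum_i\beta_i^2 = 2\sigma^2$.

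However, the fix you propose for the acknowledged gap does not work. You suggest ``carefully exploiting both estimates on $\mtx{Y}_i^2$ simultaneously inside the double sum defining $\mtx{V}^{\mtx{K}}$,'' but the obstruction is not a slack estimate---$\mtx{V}^{\mtx{K}}$ can genuinely be as large as $\tfrac12\bigl(\sum_i\beta_i\bigr)^2\Id$. Take $d=1$ and consider a conditional realization where every $\mtx{Y}_i$ equals $\beta_i$; then $\mtx{K}^2 = \bigl(\sum_i\beta_i\bigr)^2$ exactly, so $\Expect[\mtx{K}^2\condl Z] = \bigl(\sum_i\beta_i\bigr)^2$. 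No rearrangement of the double sum can recover $\sum_i\beta_i^2$, because the quantity being bounded is itself that large. More generally, feeding deterministic bounds $\mtx{V}_{\mtx{X}} \psdle \tfrac12 S^2\Id$ and $\mtx{V}^{\mtx{K}} \psdle \tfrac12\bigl(\sum\beta_i\bigr)^2\Id$ into Theorem~\ref{thm:concentration-unbdd} gives, as $\psi\to\infty$, $r(\psi)\to\tfrac12 S\sum_i\beta_i = \tfrac{S^2}{2}\sum_i\bigl(\min\{1,4RS^{-1}\tv_i\}\bigr)^{1/2}$, which exceeds $\sigma^2 = \tfrac{S^2}{2}\sum_i\min\{1,4RS^{-1}\tv_i\}$ because the minimum is at most $1$. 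So the route through $\mtx{V}_{\mtx{X}}$ and $\mtx{V}^{\mtx{K}}$ as separate objects (which is what Theorem~\ref{thm:concentration-unbdd} packages) is structurally too lossy for this corollary.

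The correct route bypasses $\mtx{V}^{\mtx{K}}$ entirely and mirrors Chatterjee's scalar argument, which bounds the mixed quantity $\tfrac12\Expect\bigl[\abs{K(Z,Z')}\abs{X-X'}\bcondl Z\bigr]$ rather than $\Expect[K^2\condl Z]$. In the matrix setting, start from the identity $m'(\theta) = \tfrac12\Expect\ntr\bigl[\mtx{K}(Z,Z')\bigl(\econst^{\theta\mtx{X}} - \econst^{\theta\mtx{X}'}\bigr)\bigr]$, expand $\mtx{K} = \sum_i\mtx{Y}_i$, and apply the exponential mean value trace inequality (Lemma~\ref{lem:emvti}) \emph{to each summand separately} with a term-dependent balancing parameter $s_i$. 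After using exchangeability and the pull-through property, this gives
\begin{equation*}
\abs{m'(\theta)} \leq \frac{\abs{\theta}}{4}\,\sum_{i=0}^{\infty}\inf_{t_i>0}\,
\Expect\ntr\Bigl[\bigl(t_i\,(\mtx{X}-\mtx{X}')^2 + t_i^{-1}\,\mtx{Y}_i^2\bigr)\,\econst^{\theta\mtx{X}}\Bigr],
\end{equation*}
and then the bounds $\Expect[(\mtx{X}-\mtx{X}')^2\condl Z] \psdle S^2\Id$ and $\Expect[\mtx{Y}_i^2\condl Z] \psdle \min\{S,\,4R\,\tv_i\}^2\Id$ let you optimize each $t_i$ independently, producing $\abs{m'(\theta)} \leq \tfrac{\abs{\theta}}{2}\,m(\theta)\,\sum_i S\min\{S,\,4R\,\tv_i\} = \abs{\theta}\,m(\theta)\,\sigma^2$. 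Integrating gives $\log m(\theta) \leq \sigma^2\theta^2/2$, and Proposition~\ref{prop:gaussexp} with $v=\sigma^2$, $c=0$ delivers both stated bounds. The one structural change from your plan, then, is that you cannot invoke Theorem~\ref{thm:concentration-unbdd} as a black box; you must reopen the proof of Lemma~\ref{lem:mgf-derivative} and move the $\inf$ inside the sum over $i$.
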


\cororef{concentration-haar} relates the concentration of Hermitian functions to the convergence of random walks on a group.
In particular, \cororef{concentration-haar} can be used to study matrices constructed from random permutations or random unitary matrices. 
The proof is similar to the proof of the scalar result; see our earlier report~\citep{PMT13:Deriving-Matrix} for details.

\subsection{Conjectures and Consequences}
\label{sec:conjectures}

We conjecture that the following trace inequalities hold.
\begin{conjecture}[Signed Mean Value Trace Inequalities] \label{conj:mvti}
For all matrices $\mtx{A}, \mtx{B}, \mtx{C} \in \Sym{d}$, all positive integers $q$,  and any $s > 0$ it holds that
\begin{align*}
\trace \big[\mtx{C} (\econst^{\mtx{A}} - \econst^{\mtx{B}} )\big]
	\leq \frac{1}{2} \trace &\big[(s \, (\mtx{A} -\mtx{B})_+^2+s^{-1} \, \mtx{C}_+^2) \, \econst^{\mtx{A}} \\
	& + (s \,(\mtx{A}-\mtx{B})_-^2+s^{-1}\, \mtx{C}_-^2) \, \econst^{\mtx{B}}) \big] .
\end{align*}
and
\begin{align*}
\trace \big[\mtx{C}(\mtx{A}^{q} - \mtx{B}^{q} )\big]
	\leq  \frac{q}{2} \trace &\big[(s \, (\mtx{A}-\mtx{B})_+^2+s^{-1}\,\mtx{C}_+^2)\abs{\mtx{A}}^{q-1} \\
	&+ (s \, (\mtx{A}-\mtx{B})_-^2+s^{-1}\,\mtx{C}_-^2)\abs{\mtx{B}}^{q-1}) \big].
\end{align*}
\end{conjecture}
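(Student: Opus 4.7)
The plan is to mirror the proofs of the unsigned versions (Lemma~\ref{lem:pmvti} and Lemma~\ref{lem:emvti}), threading the decompositions $\mtx{A}-\mtx{B} = (\mtx{A}-\mtx{B})_+ - (\mtx{A}-\mtx{B})_-$ and $\mtx{C} = \mtx{C}_+ - \mtx{C}_-$ through the argument so that positive parts are paired with $\abs{\mtx{A}}^{q-1}$ or $\econst^{\mtx{A}}$ factors and negative parts are paired with $\abs{\mtx{B}}^{q-1}$ or $\econst^{\mtx{B}}$ factors.

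For the polynomial case, first I would start from the telescoping identity
\begin{equation*}
\mtx{A}^q - \mtx{B}^q = \sum_{k=0}^{q-1} \mtx{A}^k (\mtx{A}-\mtx{B}) \mtx{B}^{q-1-k},
\end{equation*}
and expand $\mtx{C}(\mtx{A}-\mtx{B}) = (\mtx{C}_+-\mtx{C}_-)((\mtx{A}-\mtx{B})_+ - (\mtx{A}-\mtx{B})_-)$ into the four products of positive and negative parts. This yields a signed sum of traces of the form $\pm\trace[\mtx{C}_\bullet \mtx{A}^k (\mtx{A}-\mtx{B})_\bullet \mtx{B}^{q-1-k}]$. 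The goal is then to apply a carefully chosen one-sided trace Young inequality that routes each positive-part factor toward $\abs{\mtx{A}}^{q-1}$ and each negative-part factor toward $\abs{\mtx{B}}^{q-1}$, so that the four sign types combine into the single matched bound appearing on the right-hand side of the conjecture.

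For the exponential case I would use the integral representation
\begin{equation*}
\econst^{\mtx{A}} - \econst^{\mtx{B}} = \int_0^1 \econst^{(1-u)\mtx{A}}(\mtx{A}-\mtx{B})\,\econst^{u\mtx{B}}\idiff{u},
\end{equation*}
and repeat the four-way decomposition of the integrand. The factors $\econst^{(1-u)\mtx{A}}$ and $\econst^{u\mtx{B}}$ pair naturally with the $\mtx{A}$- and $\mtx{B}$-weighted sides of the target bound, respectively. A symmetrization $u \mapsto 1-u$ (averaging with the twin representation $\econst^{\mtx{A}}-\econst^{\mtx{B}} = \int_0^1 \econst^{u\mtx{A}}(\mtx{A}-\mtx{B})\,\econst^{(1-u)\mtx{B}}\idiff{u}$) would balance the contributions of $\econst^{\mtx{A}}$ and $\econst^{\mtx{B}}$ equitably across the four terms before integration in $u$ collapses the weights to produce the factor $\tfrac{1}{2}$ appearing in the conjecture.

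The hard part will be establishing the underlying one-sided trace inequality. Standard matrix Young gives only the two-sided bound $\abs{\trace(\mtx{M}\mtx{N})} \leq \tfrac{1}{2}\trace(s\,\mtx{M}\mtx{M}^\adj + s^{-1}\mtx{N}^\adj\mtx{N})$, which cannot discriminate between the positive and negative parts of its arguments, so it cannot by itself produce a bound where $\mtx{C}_+^2$ is coupled only to $\econst^{\mtx{A}}$ and $\mtx{C}_-^2$ only to $\econst^{\mtx{B}}$. One promising route is to diagonalize $\mtx{A}$ and $\mtx{B}$ separately, express each trace as a signed double sum over eigenvalue pairs $(\lambda_i(\mtx{A}), \lambda_j(\mtx{B}))$, classify pairs by the sign of $\lambda_i(\mtx{A})-\lambda_j(\mtx{B})$, and apply a scalar signed Young inequality entry by entry, discarding terms of the unfavorable sign using the PSD nature of the surviving integrand. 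An alternative is to adapt the entropy-method derivations of the scalar signed Efron--Stein bounds in~\cite{BoLuMa2003,BoLuMa2005} to the operator level, but the documented limitations of matrix entropy methods (cf.~\cite{CT14:Subadditivity-Matrix}) suggest this second route would require a genuinely new matrix concavity ingredient.
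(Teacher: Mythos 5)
This statement is labeled \emph{Conjecture}~\ref{conj:mvti} in the paper, and the authors explicitly note ``Extensive simulations with random matrices suggest that Conjecture~\ref{conj:mvti} holds, but we did not find a proof.'' There is therefore no proof in the paper to compare against. What you have submitted is not a proof either---it is a proof \emph{sketch} that defers the essential step, as you yourself acknowledge by writing ``the hard part will be establishing the underlying one-sided trace inequality.'' The framing (telescoping/integral representation, then route positive parts to $\mtx{A}$-weighted factors and negative parts to $\mtx{B}$-weighted factors) is reasonable, but everything of substance rests on a signed variant of the operator Cauchy--Schwarz/Young machinery that you do not supply, and it is precisely this ingredient that the authors could not produce.

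Beyond the incompleteness, the specific route you outline in your last paragraph has a concrete obstruction. You propose to diagonalize $\mtx{A}$ and $\mtx{B}$ separately, write the trace as a double sum over eigenvalue pairs $(\lambda_i(\mtx{A}), \lambda_j(\mtx{B}))$, classify pairs by the sign of $\lambda_i(\mtx{A})-\lambda_j(\mtx{B})$, and ``apply a scalar signed Young inequality entry by entry.'' But the matrices $\mtx{C}_\pm$ and $(\mtx{A}-\mtx{B})_\pm$ that appear in the conjectured bound are defined by the spectral decompositions of $\mtx{C}$ and of $\mtx{A}-\mtx{B}$, and these eigenbases generically do not align with those of $\mtx{A}$ or $\mtx{B}$. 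Consequently, the $(i,j)$ matrix entry of $\mtx{C}$ in the mixed $(\mtx{A},\mtx{B})$-eigenbasis has no canonical decomposition into contributions from $\mtx{C}_+$ versus $\mtx{C}_-$, and similarly the sign of $\lambda_i(\mtx{A})-\lambda_j(\mtx{B})$ does not determine which of $(\mtx{A}-\mtx{B})_+$ or $(\mtx{A}-\mtx{B})_-$ a given entry of $\mtx{A}-\mtx{B}$ ``belongs to.'' This is exactly where the scalar argument breaks down under non-commutativity: in the scalar case every quantity is simultaneously diagonal, so sign classification is coherent. Furthermore, the paper's own proofs of Lemmas~\ref{lem:pmvti} and~\ref{lem:emvti} pass through the operator Cauchy--Schwarz inequality (Lemma~\ref{lem:operator-cs}), which splits the left-hand side into a product of two terms, one involving only $\mtx{C}$ and the other only $\mtx{A}-\mtx{B}$; the absolute-value operator $\abs{\mathcal{A}_k \mathcal{B}_{q-1-k}}$ that then appears is symmetric in its dependence on $\abs{\mtx{A}}$ and $\abs{\mtx{B}}$, so that route cannot split $\mtx{C}_+^2$ off toward $\abs{\mtx{A}}^{q-1}$ while sending $\mtx{C}_-^2$ toward $\abs{\mtx{B}}^{q-1}$. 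A genuinely new idea, not just a re-threading of the existing lemmas, is needed to make progress here.
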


\noindent
This statement involves the standard matrix functions that lift the scalar functions $(a)_+ := \max\{ a, 0 \}$ and $(a)_- := \max\{-a, 0\}$.  Extensive simulations with random matrices suggest that Conjecture~\ref{conj:mvti} holds, but we did not find a proof.

These inequalities would imply one-sided matrix versions of the exponential Efron--Stein and moment bounds, similar to those formulated for the scalar setting in \citep{BoLuMa2003} and \citep{BoLuMa2005}.
In the scalar case, Conjecture \ref{conj:mvti} is valid, so it is possible to obtain the results of \citep{BoLuMa2003} and \citep{BoLuMa2005} by the exchangeable pair method.

\appendix

\section{Operator Inequalities} \label{sec:operator-ineq}

Our main results rely on some basic inequalities from operator theory.
We are not aware of good references for this material, so we have
included short proofs.

\subsection{Young's Inequality for Commuting Operators}

In the scalar setting, Young's inequality provides an additive
bound for the product of two numbers.  More precisely,
for indices $p, q \in (1, \infty)$ that satisfy the
conjugacy relation $p^{-1} + q^{-1} = 1$, we have
\begin{equation} \label{eqn:young-scalar}
ab \leq \frac{1}{p} \abs{a}^p + \frac{1}{q} \abs{b}^q
\quad\text{for all $a, b \in \R$.}
\end{equation}
The same result has a natural extension for commuting operators.

\begin{lemma}[Young's Inequality for Commuting Operators]
\label{lem:young-commute}
Suppose that $\mathcal{A}$ and $\mathcal{B}$ are self-adjoint linear maps on
the Hilbert space $\M^d$ that commute with each other.
Let $p, q \in (1, \infty)$ satisfy the conjugacy relation
$p^{-1} + q^{-1} = 1$.  Then
$$
\mathcal{A}\mathcal{B}
	\psdle \frac{1}{p} \abs{\mathcal{A}}^p + \frac{1}{q} \abs{\mathcal{B}}^q.
$$
\end{lemma}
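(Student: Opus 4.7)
The plan is to reduce the operator inequality to the scalar inequality~\eqref{eqn:young-scalar} by simultaneous diagonalization. The Hilbert space $\M^d$ is finite-dimensional, so the spectral theorem for commuting self-adjoint operators applies: since $\mathcal{A}$ and $\mathcal{B}$ are self-adjoint with respect to the trace inner product and they commute, there is an orthonormal basis of $\M^d$ (with respect to $\ip{\cdot}{\cdot}$) consisting of simultaneous eigenvectors. Equivalently, we may write
$$
\mathcal{A} = \sum\nolimits_k \alpha_k \, \mathcal{P}_k
\qtext{and}
\mathcal{B} = \sum\nolimits_k \beta_k \, \mathcal{P}_k,
$$
where $\alpha_k, \beta_k \in \R$ and the $\mathcal{P}_k$ are mutually orthogonal self-adjoint projections on $\M^d$ summing to the identity map on $\M^d$.

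Given this joint spectral decomposition, the three operators in play admit simple closed forms. First,
$$
\mathcal{A}\mathcal{B} = \sum\nolimits_k \alpha_k \beta_k \, \mathcal{P}_k,
$$
using commutativity and the orthogonality $\mathcal{P}_k \mathcal{P}_\ell = \delta_{k\ell} \mathcal{P}_k$. Second, by the functional calculus for self-adjoint operators,
$$
\abs{\mathcal{A}}^p = \sum\nolimits_k \abs{\alpha_k}^p \, \mathcal{P}_k
\qtext{and}
\abs{\mathcal{B}}^q = \sum\nolimits_k \abs{\beta_k}^q \, \mathcal{P}_k.
$$
Applying the scalar Young inequality~\eqref{eqn:young-scalar} coordinatewise yields
$$
\alpha_k \beta_k \leq \tfrac{1}{p}\abs{\alpha_k}^p + \tfrac{1}{q}\abs{\beta_k}^q
\qtext{for each $k$.}
$$
Multiplying each scalar inequality by the positive operator $\mathcal{P}_k$ and summing over $k$ preserves the positive-semidefinite order on $\M^d$, giving the claimed bound.

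The only subtle point is interpreting $\psdle$ for operators on the Hilbert space $\M^d$: here it means that the difference of the two self-adjoint maps has nonnegative spectrum (equivalently, nonnegative quadratic form with respect to $\ip{\cdot}{\cdot}$), and under the joint spectral decomposition this reduces to a coordinatewise scalar comparison, so there is no genuine obstacle. The main thing to get right is invoking the spectral theorem for a commuting pair on the finite-dimensional Hilbert space $\M^d$ rather than trying to manipulate $\mathcal{A}$ and $\mathcal{B}$ through their action on matrix arguments.
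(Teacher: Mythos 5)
Your proof is correct and is essentially the paper's own argument: both invoke simultaneous diagonalization of the commuting self-adjoint maps on $\M^d$ and then apply the scalar Young inequality eigenvalue by eigenvalue. The paper phrases the spectral theorem via a conjugating unitary $\mathcal{U}$ and diagonal operators, while you phrase it via a joint resolution of the identity $\{\mathcal{P}_k\}$, but these are two presentations of the same decomposition and the reduction to the scalar inequality is identical.
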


\begin{proof}
Since $\mathcal{A}$ and $\mathcal{B}$ commute,
there exists a unitary operator $\mathcal{U}$
and diagonal operators $\mathcal{D}$ and $\mathcal{M}$ for which
$\mathcal{A} = \mathcal{U}\mathcal{D}\mathcal{U}^*$ and
$\mathcal{B} = \mathcal{U}\mathcal{M}\mathcal{U}^*$.
Young's inequality~\eqref{eqn:young-scalar} for scalars
immediately implies that
$$
\mathcal{D}\mathcal{M} \psdle
	\frac{1}{p}\abs{\mathcal{D}}^p + \frac{1}{q}\abs{\mathcal{M}}^q.
$$
Conjugating both sides of this inequality by $\mathcal{U}$, we obtain
$$
\mathcal{A}\mathcal{B}
	= \mathcal{U}(\mathcal{D}\mathcal{M})\mathcal{U}^* 
	\psdle \frac{1}{p}\mathcal{U} \abs{\mathcal{D}}^p \mathcal{U}^*
	+ \frac{1}{q}\mathcal{U}\abs{\mathcal{M}}^q\mathcal{U}^*
	= \frac{1}{p}\abs{\mathcal{A}}^p + \frac{1}{q}\abs{\mathcal{B}}^q.
$$
The last identity follows from the definition of a standard function of an
operator.
\end{proof}

\subsection{An Operator Version of Cauchy--Schwarz}

We also need a simple version of the Cauchy--Schwarz inequality
for operators.  The proof follows a classical argument, but it
also involves an operator decomposition.

\begin{lemma}[Operator Cauchy--Schwarz] \label{lem:operator-cs}
Let $\mathcal{A}$ be a self-adjoint linear operator on the Hilbert space $\M^d$,
and let $\mtx{M}$ and $\mtx{N}$ be matrices in $\M^d$.  Then
$$
\absip{ \mtx{M} }{ \mathcal{A}(\mtx{N}) }
	\leq \big[ \ip{ \mtx{M} }{ \abs{\mathcal{A}}(\mtx{M}) } \cdot
	\ip{ \mtx{N} }{ \abs{\mathcal{A}}(\mtx{N}) } \big]^{1/2}.
$$
The inner product symbol refers to the trace, or Frobenius, inner product.
\end{lemma}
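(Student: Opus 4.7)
The plan is to mimic the classical proof of the weighted Cauchy--Schwarz inequality on a Hilbert space, using the spectral decomposition of $\mathcal{A}$ to pass from the operator $\abs{\mathcal{A}}$ to nonnegative real weights. Since $\mathcal{A}$ is a self-adjoint linear map on the finite-dimensional Hilbert space $(\M^d, \ip{\cdot}{\cdot})$, the spectral theorem writes
$$
\mathcal{A} = \sum\nolimits_i \lambda_i \mathcal{P}_i,
\quad\text{so that}\quad
\abs{\mathcal{A}} = \sum\nolimits_i \abs{\lambda_i}\, \mathcal{P}_i,
$$
where $\lambda_i \in \R$ and the $\mathcal{P}_i$ are mutually orthogonal projections on $\M^d$ whose ranges span $\M^d$. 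This is the spectral calculus for a self-adjoint operator on a Hilbert space, applied one level up from the matrix setting, and it is the cornerstone of the argument.

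Next, I would expand both sides of the desired inequality against this resolution of the identity. Since $\mathcal{P}_i = \mathcal{P}_i^* = \mathcal{P}_i^2$,
$$
\ip{\mtx{M}}{\mathcal{A}(\mtx{N})}
= \sum\nolimits_i \lambda_i \ip{\mtx{M}}{\mathcal{P}_i(\mtx{N})}
= \sum\nolimits_i \lambda_i \ip{\mathcal{P}_i(\mtx{M})}{\mathcal{P}_i(\mtx{N})}.
$$
I would then apply the ordinary Cauchy--Schwarz inequality in $(\M^d, \ip{\cdot}{\cdot})$ inside each summand, giving $\abs{\ip{\mathcal{P}_i(\mtx{M})}{\mathcal{P}_i(\mtx{N})}} \leq \smnorm{}{\mathcal{P}_i(\mtx{M})} \, \smnorm{}{\mathcal{P}_i(\mtx{N})}$, and a second (weighted) Cauchy--Schwarz over the index $i$ with weights $\abs{\lambda_i}^{1/2}$:
$$
\sum\nolimits_i \abs{\lambda_i}^{1/2} \smnorm{}{\mathcal{P}_i(\mtx{M})} \cdot \abs{\lambda_i}^{1/2} \smnorm{}{\mathcal{P}_i(\mtx{N})}
\leq \left( \sum\nolimits_i \abs{\lambda_i}\, \smnorm{}{\mathcal{P}_i(\mtx{M})}^2 \right)^{1/2}
\left( \sum\nolimits_i \abs{\lambda_i}\, \smnorm{}{\mathcal{P}_i(\mtx{N})}^2 \right)^{1/2}.
$$

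Finally, I would identify each factor on the right-hand side with the claimed quadratic form. Using $\mathcal{P}_i^* = \mathcal{P}_i = \mathcal{P}_i^2$ once more,
$$
\sum\nolimits_i \abs{\lambda_i}\, \smnorm{}{\mathcal{P}_i(\mtx{M})}^2
= \sum\nolimits_i \abs{\lambda_i}\, \ip{\mtx{M}}{\mathcal{P}_i(\mtx{M})}
= \ip{\mtx{M}}{\abs{\mathcal{A}}(\mtx{M})},
$$
and the analogous identity holds for $\mtx{N}$. Combining these ingredients gives the stated bound.

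There is essentially no deep obstacle here; the only thing to double-check is that $\ip{\mtx{M}}{\abs{\mathcal{A}}(\mtx{M})} \geq 0$, so that its square root makes sense, but this is automatic from the decomposition $\abs{\mathcal{A}} = \sum_i \abs{\lambda_i}\mathcal{P}_i$ with nonnegative weights. The mildly subtle point is simply recognizing that the Hilbert space on which $\mathcal{A}$ lives is $\M^d$ under the trace inner product, not $\C^d$, so ``self-adjoint'' and ``spectral theorem'' must be interpreted on that superspace.
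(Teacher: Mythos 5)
Your argument is correct, but it takes a genuinely different route from the paper. You diagonalize $\mathcal{A}$ via the spectral theorem, $\mathcal{A} = \sum_i \lambda_i \mathcal{P}_i$, pass $\abs{\mathcal{A}}$ to the weights $\abs{\lambda_i}$, and then run Cauchy--Schwarz twice: once inside each eigenspace (on $\M^d$ with the trace inner product) and once more across the index $i$ with weights $\abs{\lambda_i}$. The paper avoids the spectral decomposition entirely: it writes $\mathcal{A} = \mathcal{A}_+ - \mathcal{A}_-$ (Jordan decomposition), expands the nonnegative quadratic forms $\ip{s\mtx{M}\mp s^{-1}\mtx{N}}{\mathcal{A}_\pm(s\mtx{M}\mp s^{-1}\mtx{N})}\geq 0$, adds them to obtain $2\ip{\mtx{M}}{\mathcal{A}(\mtx{N})}\leq s^2\ip{\mtx{M}}{\abs{\mathcal{A}}(\mtx{M})}+s^{-2}\ip{\mtx{N}}{\abs{\mathcal{A}}(\mtx{N})}$, and optimizes over $s>0$ to produce the geometric mean; a symmetric repetition supplies the absolute value on the left. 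Both arguments are standard proofs of weighted Cauchy--Schwarz. Your route is more explicit and delivers the absolute value on the left-hand side in one pass; the paper's route is ``basis-free'' (it only needs the existence of $\mathcal{A}_\pm$, not an eigenbasis) and is stylistically of a piece with the parametrized arithmetic--geometric mean steps used throughout the rest of the paper, where the free parameter $s$ ultimately becomes the $s$ in Lemmas~\ref{lem:pmvti} and~\ref{lem:emvti}. One small point you gloss over: before applying Cauchy--Schwarz termwise you need the triangle inequality $\abs{\sum_i \lambda_i \ip{\mathcal{P}_i(\mtx{M})}{\mathcal{P}_i(\mtx{N})}}\leq\sum_i\abs{\lambda_i}\,\abs{\ip{\mathcal{P}_i(\mtx{M})}{\mathcal{P}_i(\mtx{N})}}$; this is immediate but worth stating since the inner products $\ip{\mathcal{P}_i(\mtx{M})}{\mathcal{P}_i(\mtx{N})}$ are in general complex.
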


\begin{proof}
Consider the Jordan decomposition $\mathcal{A} = \mathcal{A}_{+} - \mathcal{A}_{-}$,
where $\mathcal{A}_+$ and $\mathcal{A}_{-}$ are both positive semidefinite.  For all $s > 0$,
\begin{align*}
0 &\leq \ip{(s \mtx{M} - s^{-1} \mtx{N}) }{ \mathcal{A}_+(s \mtx{M} - s^{-1}\mtx{N}) } \\
	&= s^2 \ip{ \mtx{M} }{\mathcal{A}_+(\mtx{M}) }
	+ s^{-2} \ip{ \mtx{N} }{ \mathcal{A}_+(\mtx{N}) }
	- 2 \ip{ \mtx{M} }{ \mathcal{A}_+( \mtx{N} ) }.
\end{align*}
Likewise,
\begin{align*}
0 &\leq \ip{(s \mtx{M} + s^{-1} \mtx{N}) }{ \mathcal{A}_{-}(s \mtx{M} + s^{-1}\mtx{N}) } \\
	&= s^2 \ip{ \mtx{M} }{\mathcal{A}_{-}(\mtx{M}) }
	+ s^{-2} \ip{ \mtx{N} }{ \mathcal{A}_{-}(\mtx{N}) }
	+ 2 \ip{ \mtx{M} }{ \mathcal{A}_{-}( \mtx{N} ) }. 
\end{align*}
Add the latter two inequalities and rearrange the terms to obtain
$$
2 \ip{ \mtx{M} }{ \mathcal{A}(\mtx{N}) }
	\leq s^2 \ip{ \mtx{M} }{ \abs{\mathcal{A}}(\mtx{M}) }
	+ s^{-2} \ip{ \mtx{N} }{ \abs{\mathcal{A}}(\mtx{N}) },
$$
where we have used the relation $\abs{\mathcal{A}} = \mathcal{A}_+ + \mathcal{A}_-$.  Take the infimum of the right-hand side over $s > 0$ to reach
\begin{equation} \label{eqn:cs-halfway}
\ip{ \mtx{M} }{ \mathcal{A}( \mtx{N} ) }
	\leq \big[ \ip{ \mtx{M} }{\abs{\mathcal{A}}(\mtx{M}) } \cdot 
	\ip{ \mtx{M} }{ \abs{\mathcal{A}}( \mtx{N} ) } \big]^{1/2}.
\end{equation}
Repeat the same argument, interchanging the roles of the matrices
$s \mtx{M} - s^{-1} \mtx{N}$ and $s \mtx{M} + s^{-1} \mtx{N}$.
We conclude that~\eqref{eqn:cs-halfway} also holds with an absolute value on the
left-hand side.  This observation completes the proof.
\end{proof}

\section{The Polynomial Mean Value Trace Inequality}
\label{sec:proof-pmvti}

The critical new ingredient in Theorem~\ref{thm:bdg-inequality}
is the polynomial mean value trace inequality, Lemma~\ref{lem:pmvti}.
Let us proceed with a proof of this result.

\begin{proof}[Proof of Lemma~\ref{lem:pmvti}] 
First, we need to develop another representation for the trace
quantity that we are analyzing.  Assume that $\mtx{A}, \mtx{B}, \mtx{C} \in \Sym{d}$.  A direct calculation shows that
\begin{align*}
\mtx{A}^{q} - \mtx{B}^{q} 
	= \sum\nolimits_{k=0}^{q-1}\mtx{A}^{k} (\mtx{A} - \mtx{B}) \mtx{B}^{q-1-k}.
\end{align*}
As a consequence,
\begin{equation} \label{eqn:poly-trace-sum}
\trace \left[\mtx{C} (\mtx{A}^{q} - \mtx{B}^{q} )\right] 
	= \sum\nolimits_{k=0}^{q-1} \ip{ \mtx{C} }{ \mtx{A}^{k} (\mtx{A} - \mtx{B}) \mtx{B}^{q-1-k} }.
\end{equation}
To bound the right-hand side of~\eqref{eqn:poly-trace-sum}, we require an approriate mean inequality.

To that end, we define some self-adjoint operators on $\M^d$:
$$
\mathcal{A}_k(\mtx{M}) \defby \mtx{A}^k \mtx{M}
\qtext{and}
\mathcal{B}_{k}(\mtx{M}) \defby  \mtx{M} \mtx{B}^{k}
\quad\text{for each $k = 0, 1, 2, \dots, q-1$.} 
$$
The absolute values of these operators satisfy
$$
\abs{\mathcal{A}_k}(\mtx{M}) = \abs{\mtx{A}}^k \mtx{M}
\qtext{and}
\abs{\mathcal{B}_{k}}(\mtx{M}) =  \mtx{M} \abs{\mtx{B}}^{k}
\quad\text{for each $k = 0, 1, 2, \dots, q-1$.} 
$$
Note that $\abs{\mathcal{A}_k}$ and $\abs{\mathcal{B}_{q-k-1}}$ commute with each other
for each $k$.  Therefore, Young's inequality for commuting operators, Lemma~\ref{lem:young-commute}, yields the bound
\begin{align} 
\abs{\mathcal{A}_k\mathcal{B}_{q-k-1}}
	= \abs{\mathcal{A}_k} \abs{\mathcal{B}_{q-k-1}}
	&\psdle \frac{k}{q-1}\abs{\mathcal{A}_k}^{(q-1)/k}
	+ \frac{q-k-1}{q-1} \abs{\mathcal{B}_{q-k-1}}^{(q-1)/(q-k-1)} \notag \\
	&= \frac{k}{q-1} \abs{\mathcal{A}_1}^{q-1} + \frac{q-k-1}{q-1}\abs{\mathcal{B}_1}^{q-1}. \label{eqn:poly-mean-ineq}
\end{align}
Summing over $k$, we discover that
\begin{align}\label{eqn:young-bound}
\sum\nolimits_{k=0}^{q-1}\abs{\mathcal{A}_k\mathcal{B}_{q-k-1} }
	&\psdle \frac{q}{2} \abs{\mathcal{A}_1}^{q-1} + \frac{q}{2}\abs{\mathcal{B}_1}^{q-1}.
\end{align}
This is the mean inequality that we require.

To apply this result, we need to rewrite~\eqref{eqn:poly-trace-sum} using the operators $\mathcal{A}_k$ and $\mathcal{A}_{q-k-1}$.  It holds that
\begin{multline} \label{eqn:poly-sums}
\trace \left[\mtx{C} (\mtx{A}^{q} - \mtx{B}^{q} )\right]
	= \sum_{k=0}^{q-1} \ip{ \mtx{C} }{ (\mathcal{A}_k \mathcal{B}_{q-k-1})(\mtx{A}-\mtx{B}) } \\
	\leq \left[ \sum_{k=0}^{q-1} \ip{ \mtx{C} }{ \abs{\mathcal{A}_k \mathcal{B}_{q-k-1}}(\mtx{C}) } \cdot \sum_{k=0}^{q-1} \ip{ \mtx{A} - \mtx{B} }{  \abs{\mathcal{A}_k \mathcal{B}_{q-k-1}}(\mtx{A}-\mtx{B}) } \right]^{1/2}. 
\end{multline}
The second relation follows from the operator Cauchy--Schwarz inequality, Lem\-ma~\ref{lem:operator-cs}, and the usual Cauchy--Schwarz inequality for the sum.

It remains to bound to two sums on the right-hand side of~\eqref{eqn:poly-sums}. 
The mean inequality~\eqref{eqn:poly-mean-ineq} ensures that
\begin{multline} \label{eqn:poly-sum-1}
\sum_{k=0}^{q-1} \ip{ \mtx{C} }{ \abs{\mathcal{A}_k \mathcal{B}_{q-k-1}}(\mtx{C}) }
	\leq \frac{q}{2} \ip{ \mtx{C} }{ \big(\abs{\mathcal{A}_1}^{q-1} + \abs{\mathcal{B}_1}^{q-1} \big)(\mtx{C}) } \\
	= \frac{q}{2} \ip{ \mtx{C} }{ \abs{\mtx{A}}^{q-1} \mtx{C} + \mtx{C} \abs{\mtx{B}}^{q-1} }
	= \frac{q}{2} \trace\big[ \mtx{C}^2 \big( \abs{\mtx{A}}^{q-1} + \abs{\mtx{B}}^{q-1} \big) \big].
\end{multline}
Likewise,
\begin{equation} \label{eqn:poly-sum-2}
\sum_{k=0}^{q-1} \ip{ \mtx{A} - \mtx{B} }{ \abs{\mathcal{A}_k \mathcal{B}_{q-k-1}}(\mtx{A}-\mtx{B}) }
	\leq \frac{q}{2} \trace\big[ (\mtx{A}-\mtx{B})^2 \big( \abs{\mtx{A}}^{q-1} + \abs{\mtx{B}}^{q-1} \big) \big].
\end{equation}
Introduce the two inequalities~\eqref{eqn:poly-sum-1} and~\eqref{eqn:poly-sum-2} into~\eqref{eqn:poly-sums} to reach
\begin{align*}
&\trace \left[\mtx{C} (\mtx{A}^{q} - \mtx{B}^{q} )\right]
	\\
	&\quad\leq \frac{q}{2} \bigg( \trace\big[ \mtx{C}^2 \big( \abs{\mtx{A}}^{q-1} + \abs{\mtx{B}}^{q-1} \big) \big] \cdot
	\trace\big[ (\mtx{A}-\mtx{B})^2 \big( \abs{\mtx{A}}^{q-1} + \abs{\mtx{B}}^{q-1} \big) \big] \bigg)^{1/2}.
\end{align*}
The result follows when we apply the numerical inequality between the geometric mean and the arithmetic mean.
\end{proof}

\section{The Exponential Mean Value Trace Inequality} \label{sec:emvti}

Finally, we establish the trace inequality stated in Lemma~\ref{lem:emvti}.
See the manuscript \citep{Anote} for an alternative proof.

\begin{proof}[Proof of Lemma~\ref{lem:emvti}] 
To begin, we develop an alternative expression for the trace
quantity that we need to bound.  Observe that 
$$
\frac{\diff{}}{\diff{\tau}} \econst^{\tau \mtx{A}} \econst^{(1-\tau)\mtx{B}}
	= \econst^{\tau \mtx{A}}(\mtx{A} - \mtx{B}) \econst^{(1-\tau)\mtx{B}}.
$$
The Fundamental Theorem of Calculus delivers the identity
$$
\econst^{\mtx{A}} - \econst^{\mtx{B}}
	= \int_0^1 \frac{\diff{}}{\diff{\tau}}
	\econst^{\tau \mtx{A}} \econst^{(1-\tau)\mtx{B}} \idiff{\tau}
	= \int_0^1 \econst^{\tau \mtx{A}}(\mtx{A} - \mtx{B}) \econst^{(1-\tau)\mtx{B}}
	\idiff{\tau}.
$$
Therefore, using the definition of the trace inner product, we reach
\begin{equation} \label{eqn:emvti-integral}
\trace \big[ \mtx{C} \big( \econst^{\mtx{A}} - \econst^{\mtx{B}} \big) \big]
	= \int_0^1 \ip{ \mtx{C} }{ \econst^{\tau \mtx{A}}(\mtx{A} - \mtx{B}) \econst^{(1-\tau)\mtx{B}} } \diff{\tau}.
\end{equation}
We can bound the right-hand side by developing an appropriate
matrix version of the inequality between the logarithmic
mean and the arithmetic mean.

Let us define two families of positive-definite
operators on the Hilbert space $\M^{d}$:
$$
\mathcal{A}_{\tau}(\mtx{M}) = \econst^{\tau \mtx{A}} \mtx{M}
\quad\text{and}\quad
\mathcal{B}_{1-\tau}(\mtx{M}) = \mtx{M} \econst^{(1-\tau) \mtx{B}}
\quad\text{for each $\tau \in [0,1]$.}
$$
In other words, $\mathcal{A}_{\tau}$ is a left-multiplication operator,
and $\mathcal{B}_{1-\tau}$ is a right-multi\-plication operator.  It follows
immediately that $\mathcal{A}_{\tau}$ and $\mathcal{B}_{1-\tau}$ commute
for each $\tau \in [0,1]$.
Young's inequality for commuting operators, Lemma~\ref{lem:young-commute},
implies that
\begin{equation*} 
\mathcal{A}_\tau\mathcal{B}_{1-\tau} 
	\psdle \tau \cdot \abs{\mathcal{A}_\tau}^{1/\tau} + (1-\tau) \cdot \abs{\mathcal{B}_{1-\tau}}^{1/(1-\tau)}
	=  \tau \cdot \abs{\mathcal{A}_1} + (1-\tau) \cdot \abs{\mathcal{B}_{1}}.
\end{equation*}
Integrating over $\tau$, we discover that
\begin{align}\label{eqn:young-exp-bound}
\int\nolimits_{0}^1\mathcal{A}_\tau\mathcal{B}_{1-\tau} d\tau
	&\psdle \frac{1}{2} (|\mathcal{A}_1| + |\mathcal{B}_{1}|)
	= \frac{1}{2} (\mathcal{A}_1 + \mathcal{B}_{1}).
\end{align}
This is our matrix extension of the logarithmic--arithmetic mean inequality.

To relate this result to the problem at hand, we rewrite
the expression~\eqref{eqn:emvti-integral} using the operators
$\mathcal{A}_{\tau}$ and $\mathcal{B}_{1-\tau}$.  Indeed,
\begin{multline}
\trace \big[ \mtx{C} \big( \econst^{\mtx{A}} - \econst^{\mtx{B}} \big) \big]
	= \int_0^1 \ip{ \mtx{C} }{ (\mathcal{A}_{\tau} \mathcal{B}_{1-\tau} )(\mtx{A} - \mtx{B}) } \diff{\tau} \\
	\leq \left[ \int_0^1 \ip{ \mtx{C} }{ (\mathcal{A}_{\tau} \mathcal{B}_{1-\tau} )(\mtx{C})} \diff{\tau} \cdot
	\int_0^1 \ip{ \mtx{A} - \mtx{B} }{ (\mathcal{A}_{\tau} \mathcal{B}_{1-\tau} )(\mtx{A} - \mtx{B})} \diff{\tau} \right]^{1/2}.
	\label{eqn:exp-bd-integrals}
\end{multline}
The second identity follows from the definition of the trace inner product.
The last relation follows from the operator Cauchy--Schwarz inequality, Lemma~\ref{lem:operator-cs}, and the usual Cauchy--Schwarz inequality for the integral.

It remains to bound the two integrals in~\eqref{eqn:exp-bd-integrals}.  These estimates are an immediate consequence of~\eqref{eqn:young-exp-bound}.  First,
\begin{multline} \label{eqn:exp-integral-1}
\int_0^1 \ip{ \mtx{C} }{ (\mathcal{A}_{\tau} \mathcal{B}_{1-\tau} )(\mtx{C})} \idiff{\tau}
	\leq \frac{1}{2} \ip{ \mtx{C} }{ (\mathcal{A}_1 + \mathcal{B}_1)(\mtx{C}) } \\
	= \frac{1}{2} \ip{ \mtx{C} }{ \econst^{\mtx{A}} \mtx{C} + \mtx{C} \econst^{\mtx{B}} }
	= \frac{1}{2} \trace\big[ \mtx{C}^2 \big( \econst^{\mtx{A}} + \econst^{\mtx{B}} \big) \big]. 
\end{multline}
The last two relations follow from the definitions of the operators $\mathcal{A}_1$ and $\mathcal{B}_1$, the definition of the trace inner product, and the cyclicity of the trace.  Likewise,
\begin{equation} \label{eqn:exp-integral-2}
\int_0^1 \ip{ \mtx{A} - \mtx{B} }{ (\mathcal{A}_{\tau} \mathcal{B}_{1-\tau} )(\mtx{A} - \mtx{B})} \diff{\tau}
= \frac{1}{2} \trace\big[ (\mtx{A}-\mtx{B})^2 \big( \econst^{\mtx{A}} + \econst^{\mtx{B}} \big) \big].
\end{equation}
Substitute~\eqref{eqn:exp-integral-1} and~\eqref{eqn:exp-integral-2} into the inequality~\eqref{eqn:exp-bd-integrals} to reach
$$
\trace \big[ \mtx{C} \big( \econst^{\mtx{A}} - \econst^{\mtx{B}} \big) \big]
	\leq \frac{1}{2} \bigg( \trace\big[ \mtx{C}^2 \big( \econst^{\mtx{A}} + \econst^{\mtx{B}} \big) \big] \cdot
	\trace\big[ (\mtx{A}-\mtx{B})^2 \big( \econst^{\mtx{A}} + \econst^{\mtx{B}} \big) \big] \bigg)^{1/2}.
$$
We obtain the result stated in Lemma~\ref{lem:emvti} by applying
the numerical inequality between the geometric mean and the
arithmetic mean.
\end{proof}

\bibliographystyle{plainnat}
\bibliography{refs-stein_kernel.bib}

\end{document}